\numberwithin{equation}{section}
\theoremstyle{plain}
\newtheorem*{theorem*}{Theorem}
\newtheorem*{lemma*}{Lemma}
\newtheorem{theorem}{Theorem}
\newtheorem{lemma}{Lemma}[section]
\newtheorem{corollary}[lemma]{Corollary}
\newtheorem{proposition}[lemma]{Proposition}
\newenvironment{customthm}[1]
{\innercustomthm}
{\endinnercustomthm}
\theoremstyle{definition}
\newtheorem{remark}[lemma]{Remark}
\newtheorem{example}[lemma]{Example}
\def\o{\omega}
\def\pr{\partial}
\def\n{\nabla}  
\def\V{\Vert}
\begin{document}
	\title{Symmetries and Critical Phenomena in Fluids}
	\author{Tarek M. Elgindi\textsuperscript{1} \and In-Jee Jeong\textsuperscript{2}}
	\footnotetext[1]{Department of Mathematics, UC San Diego. E-mail: telgindi@ucsd.edu.}
	\footnotetext[2]{Department of Mathematics, Korea Institute for Advanced Study. E-mail: ijeong@kias.re.kr.}
	\date{\today}
	\maketitle
	\begin{abstract}
		We study $2D$ active scalar systems arising in fluid dynamics in critical spaces in the whole plane. We prove an optimal well-posedness result which allows for the data and solutions to be scale-invariant. These scale-invariant solutions are new and their study seems to have far reaching consequences.  
		
		More specifically,  we first show that the class of bounded vorticities satisfying a discrete rotational symmetry is a global existence and uniqueness class for the $2D$ Euler squation. That is, in the well-known $L^1 \cap L^\infty$ theory of Yudovich, the $L^1$ assumption can be dropped upon having an appropriate symmetry condition. We also show via explicit examples the necessity of discrete symmetry for the uniqueness. This already answers problems raised by Lions \cite{L} and Bendetto, Marchioro, and Pulvirenti \cite{BMP}. Next, we note that merely bounded vorticity allows for one to look at solutions which are invariant under scaling--the class of vorticities which are $0-$homogeneous in space. Such vorticity is shown to satisfy a new $1D$ evolution equation on $\mathbb{S}^1.$ 
		Solutions are also shown to exhibit a number of interesting properties. In particular, using this framework, we construct time quasi-periodic solutions to the $2D$ Euler equation exhibiting pendulum-like behavior. Finally, using the analysis of the $1D$ equation, we exhibit strong solutions to the $2D$ Euler equation with compact support for which angular derivatives grow (almost) quadratically in time (in particular, superlinear) or exponential in time (the latter being in the presence of a boundary).   
		
		A similar study can be done for the surface quasi-geostrophic (SQG) equation. Using the same symmetry condition, we prove local existence and uniqueness of solutions which are merely Lipschitz continuous near the origin---though, without the symmetry, Lipschitz initial data is expected to lose its Lipschitz continuity immediately. Once more, a special class of radially homogeneous solutions is considered and we extract a $1D$ model which bears great resemblance to the so-called De Gregorio model. We then show that finite-time singularity formation for the $1D$ model \emph{implies} finite-time singularity formation in the class of Lipschitz solutions to the SQG equation which are compact support.     
		
		While the study of special infinite energy (i.e. non-decaying) solutions to fluid models is classical, this appears to be the first case where these special solutions can be embedded into a natural existence/uniqueness class for the equation. Moreover, these special solutions approximate finite-energy solutions for long time and have direct bearing on the global regularity problem for finite-energy solutions. 
	\end{abstract}
	\tableofcontents
	\section{Introduction}
	The dynamics of incompressible fluids, while an old and active subject, is still very elusive due to the possibility of rapid small-scale creation as well as the inherent non-locality of incompressible fluid motion. Since the work of Euler, who wrote down the partial differential equations which model the flow of an ideal fluid, a number of works have been devoted to proving existence, uniqueness, and stability results but only recently did authors start to work on the notion of "criticality" in the study of ideal fluids. Critical spaces are those function spaces which are small enough to allow for an existence and uniqueness theory but large enough to allow for the presence of certain fine scale structures (jump discontinuities or corners, for example). While it may seem that solvability in critical spaces is simply a "philosophical" question of how irregular one can take the initial data to be and still solve the given PDE uniquely, working in critical spaces allows one to get to the heart of what's going on in the PDE, as we shall soon see in the examples of the $2D$ Euler equation and the surface quasi-geostrophic equation. In critical spaces, one can actually study the evolution of special structures which may satisfy a much simpler dynamic than the full problem. In particular, since we are working in a "critical space" and since criticality is related to a notion of scaling, the dynamic of these special structures may actually be governed by "scale-invariant" solutions. Furthermore, the dynamic of these scale-invariant solutions, such as whether they collapse in finite time, will turn out to have a direct bearing on regularity questions. This is the basic philosophy of this work and we will illustrate it through two examples: the $2D$ Euler equation and the surface quasi-geostophic equation. In fact, in a companion work, we will also discuss the 3D Euler equation, but we elected to give it its own place due to the importance of the questions associated with the 3D Euler equation. We begin by introducing the $2D$ Euler equation and some of the works which have been done in critical spaces. 
	\subsection{The $2D$ Euler equation}
	Consider the Cauchy problem for the $2D$ Euler equation on the whole plane. That is, given a divergence-free initial data $u_0:\mathbb{R}^2\rightarrow\mathbb{R}^2$, we look for solutions of 
	\begin{equation}\label{eq:Euler}
	\begin{split}
	\left\{ \begin{array}{rl}
	\pr_t u + u\cdot\n u + \n p &=0~,\\
	\n\cdot u & = 0~,\\
	u(0) &=u_0~.
	\end{array} \right.
	\end{split}
	\end{equation}
	Classically, it is well known that if the initial data $u_0$ is regular enough, then one can solve \eqref{eq:Euler} uniquely. This can be done, locally in time, by energy methods. More concretely, if $u_0$ belongs to either $H^s$ (with $s > d/2 + 1$, $d$ is the dimension of the domain) or $C^{k,\alpha}$ (with $k \ge 1$ and $0<\alpha<1$), then, there is a unique solution in the class $C^0([0,T];H^s)$ or $C^0([0,T];C^{k,\alpha})$, for some $T = T(u_0) > 0$. 
	A key step in showing such a well-posedness result is to obtain an appropriate \textit{a priori} estimate, which in this case takes the form \begin{equation*}
	\begin{split}
	\frac{d}{dt} \V  u(t) \V_{H^s} &\le C_s \V \nabla u(t) \V_{L^\infty} \V u(t) \V_{H^s}~, \qquad
	\frac{d}{dt} \V  u(t) \V_{C^{k,\alpha}} \le C_{k,\alpha} \V \nabla u(t) \V_{L^\infty} \V u(t) \V_{C^{k,\alpha}}~, 
	\end{split}
	\end{equation*} applied with the embeddings $\V \n u\V_{L^\infty} \le  C_{k,\alpha}  \V u(t) \V_{C^{k,\alpha}}$ and $\V \n u \V_{L^\infty} \le C_s \V u(t) \V_{H^s}$. One sees from this that the Lipschitz norm of the velocity actually propagates higher regularity in time. On the other hand, the Lipschitz bound allows one to solve uniquely the following ODE system \begin{equation}\label{eq:flow_ODE}
	\begin{split}
	\frac{d}{dt} \Phi(t,x) &= u(t,\Phi(t,x))~,\\
	\Phi(0,x) &= x~,
	\end{split}
	\end{equation} for each $x$ in the domain. Then for each fixed $t$, $\Phi(t,\cdot)$ is a diffeomorphism of the domain, called the flow map. It is useful to define the vorticity of the fluid, $\omega=\nabla\times u$.
	The following logarithmic estimates are available for smooth solutions: \begin{equation*}
	\begin{split}
	\V \n u \V_{L^\infty} \le C_s \V \o \V_{L^\infty} \log(1 +\V  u(t) \V_{H^s} )~, \qquad \V \n u \V_{L^\infty} \le C_{k,\alpha} \V \o \V_{L^\infty} \log(1 +\V  u(t) \V_{C^{k,\alpha}} )~,
	\end{split}
	\end{equation*} and combined with the standard estimates above, one obtains the criteria of Beale, Kato, and Majda \cite{BKM} that as long as we keep $\V \o \V_{L^\infty}$ in control, the solution of \eqref{eq:Euler} remains smooth. This is actually automatic in the case of two dimensions; the vorticity equals a scalar $\pr_{x_1} u_2 - \pr_{x_2} u_1$, and taking the curl of \eqref{eq:Euler}, one obtains the vorticity equation; \begin{equation}\label{eq:Euler_vort}
	\begin{split}
	\pr_t \o + u\cdot\n \o = 0~. 
	\end{split}
	\end{equation} Note that the vorticity is simply being transported by the flow: we have  \begin{equation}\label{eq:flow}
	\begin{split}
	\omega(t,x) = \omega_0( \Phi_t^{-1}(x) )~,
	\end{split}
	\end{equation} and hence the maximum of the vorticity is a conserved quantity. In the case of the whole plane, when $\omega$ has some decay at infinity ($\o \in L^1\cap L^\infty(\mathbb{R}^2)$ would suffice), it is well-known that the following Biot-Savart law \begin{equation}\label{eq:BS}
	\begin{split}
	u(x) = \frac{1}{2\pi} \int_{\mathbb{R}^2}  \frac{(x-y)^\perp}{|x-y|^2} \o(y) dy
	\end{split}
	\end{equation} uniquely recovers the velocity which decays at infinity, and the set of equations \eqref{eq:flow_ODE}, \eqref{eq:flow}, and \eqref{eq:BS} gives a formulation of the $2D$ Euler equation, which does not involve differentiation. Now, a bound on $\|\o\|_{L^\infty}$ only gives that $u$ is quasi-Lipschitz. More precisely, if $\omega\in L^1\cap L^\infty,$ Yudovich \cite{Y1} showed that for all $|x-x'|<\frac{1}{2},$ $$|u(x)-u(x')|\leq C\|\omega\|_{L^1\cap L^\infty}|x-x'| \log\frac{1}{|x-x'|}~,$$ for some universal constant $C>0.$ This allows us to solve the ODE \eqref{eq:flow_ODE} uniquely. These considerations lead to an existence and uniqueness theory for $L^1\cap L^\infty$ vorticities which was first established by Yudovich in 1963 \cite{Y1}. This extension of the well-posedness theory over the classical results is a significant one; for instance, the class $L^1\cap L^\infty$ contains patch-type vorticity (e.g. the characteristic function of a measurable set), which models certain physical situations. However, the fact that the $L^\infty$ bound on $\omega$ does not necessarily lead to Lipschitz control on the velocity field can be problematic since a non-Lipschitz velocity field can lead to a flow map $\Phi(t,\cdot)$ which loses its regularity in time. Since $\omega=\omega_0\circ\Phi^{-1},$  a bound on the Lipschitz norm of $u$ is crucial to propagate fine-scale structures which may be present in the initial data. Unfortunately, as has been established in the works of the first author and Masmoudi \cite{EM1} as well as in the work of Bourgain and Li \cite{BL2}, propagating a bound on the Lipschitz norm of $u$ is, in general, not possible if $u_0$ is only assumed to be Lipschitz continuous. In order to actually propagate a Lipschitz bound on $u$, the initial data must be taken to be smoother than just Lipschitz. As noted above, $C^{1,\alpha}$ or $H^{s}$ with $s>2$ would do, but one can also propagate Lipschitz bounds on $u$ using anisotropic regularity such as is the case with smooth vortex patches (see \cite{C}). Propagating the Lipschitz bound on $u$ for solutions which have corner like discontinuities was left open in previous works and will be returned to later on in this work (see the subsection on the propagation of angular regularity). 
	\subsection{Solutions with non-decaying vorticity and the symmetry condition}
	
	\subsubsection*{Serfati Solutions}
	
	The usual assumption that $\omega \in L^1 \cap L^\infty$ may be argued physically unsatisfactory, since the vorticity and the velocity have to decay at infinity. Indeed, the well-posedness of the Euler equations with merely bounded velocity and vorticity has been achieved in the works of Serfati \cite{S1,S2}, and they have been further expanded and generalized in  \cite{AKLN,K1,Ta,TTY}. The main result states that (see \cite{K1} for details) given a pair of bounded functions $(u_0,\o_0)$ satisfying $\n \times u_0 = \o_0$ and $\nabla \cdot u_0 = 0$, and an arbitrary continuous function $U_\infty(t): \mathbb{R}^2 \rightarrow \mathbb{R}^2 $ satisfying $U_\infty(0) = 0$, there is a unique global-in-time solution to the $2D$ Euler equation where for each time, $u_t, \o_t$ are bounded functions and satisfies the ``renormalized Biot-Savart law'' \begin{equation}\label{eq:rBS}
	\begin{split}
	u_t(x) - u_0(x) = U_\infty(t) + \lim_{R \rightarrow \infty} (a_R K) * (\omega_t - \omega_0) (x) ~,
	\end{split}
	\end{equation} where $K(\cdot)$ denotes the Biot-Savart kernel in \eqref{eq:BS} and $a_R$ is some cutoff whose support increases to infinity with $R$. Here, the ``behavior at infinity'' $U_\infty(t)$ can be removed with the following change of variables \begin{equation*}
	\begin{split}
	\bar{u} (t,x ) := u(t, x + \int_0^t U_\infty(s)ds ) - U_\infty(t)~,\quad \bar{p} (t,x):= p(t, x + \int_0^t U_\infty(s)ds ) + U'_\infty(t) \cdot x~. 
	\end{split}
	\end{equation*} 
	With $\o $ just in $L^\infty$, the Biot-Savart law clearly does not converge. The key observation of Serfati was that, using the Euler equations and integration by parts, for smooth solutions one has the following Serfati identity: \begin{equation}\label{eq:Serfati_identity}
	\begin{split}
	u_t - u_0 &= U_\infty(t) + (aK)*(\o_t - \o_0) + ( (1-a)K ) * (\o_t - \o_0) \\  &= U_\infty(t) +  (aK)*(\o_t - \o_0)  - \int_0^t \left( \n \n^\perp [(1-a)K] \right) * (u\otimes u) (s) ds~,
	\end{split}
	\end{equation} with $a$ being some compactly supported cut-off function, and surprisingly the last expression makes sense with $u \in L^\infty$, since $\n \n^\perp [(1-a)K]$ has decay $|x|^{-3}$, which is integrable. The identities \eqref{eq:rBS} and \eqref{eq:Serfati_identity} are indeed equivalent, see \cite{K1}. 
	
	\subsubsection*{The role of symmetry}
	
	In this paper, we keep the assumption $\omega \in  L^\infty $ but replace the assumption $ u \in L^\infty$ with $\omega$ being $m$-fold rotationally symmetric around the origin, for some integer $m \ge 3$. Our first main result shows that one can uniquely solve the $2D$ Euler equation in this symmetry class: \begin{customthm}{A}[Existence and Uniqueness for bounded and $m$-fold symmetric vorticity]\label{MainThm1}
		Assume that $\omega_0$ is a bounded and $m$-fold rotationally symmetric function on the plane, for some $m \ge 3$. Then, there is a unique global solution to the $2D$ Euler equation with $\omega \in L^\infty([0,\infty); L^\infty(\mathbb{R}^2) )$ and $m$-fold rotationally symmetric. 
	\end{customthm}
	\begin{remark}
		Part of the statement is that there is a well-defined velocity field associated with such non-decaying vorticity. This is made precise in the statement of the Theorem \ref{thm:main} below. 
	\end{remark}

	Somewhat analogously to the Serfati case, the key fact we utilize is that under the symmetry assumption, the Biot-Savart kernel actually gains integrable decay at infinity (this fact was obtained in a very recent work of the first author \cite{E1}), which in particular enables us to recover the velocity from the vorticity. Indeed, using the symmetry of $\omega$, one may rewrite \begin{equation*}
	\begin{split}
	u(x) = K* \o (x ) = \frac{1}{m} \sum_{i=1}^m  \int_{\mathbb{R}^2}  K(x- O^i_{2\pi/m} y) \omega(y)   dy~,
	\end{split}
	\end{equation*} with $O_{2\pi/m}$ being the counterclockwise rotation matrix by the angle $2\pi/m$, and the point is that \begin{equation*}
	\begin{split}
	\sum_{i=1}^m K(x- O^i_{2\pi/m} y) \approx c \frac{|x|^{m-1}}{|y|^m}
	\end{split}
	\end{equation*} in the regime $|y| \ge C|x|$. This is integrable for $m \ge 3$, and barely fails to be so for $m = 2$. 
	In this situation, the origin is a fixed point for all time, and the velocity and stream function have bounds \begin{equation}\label{eq:Lipschitz_bound}
	\begin{split}
	|u(x)| \le C \V \omega\V_{L^\infty} |x|~, \qquad |\Psi(x)| \le C\V \omega\V_{L^\infty} |x|^2~,
	\end{split}
	\end{equation}
	which are natural in view of physical dimensions. This in particular removes the non-uniqueness issue arising from $U_\infty(t)$ in the Serfati case, and more importantly, it says that the logarithmic correction $\ln |x|$ (which we usually expect) is absent at the origin, and this plays a crucial role in the proof of Theorem \ref{MainThm1}. 
	
\subsubsection*{An Example Illustrating the Importance of Symmetry for Uniqueness}

\begin{proposition}
Let $\omega_0= \chi_{y\geq 0}.$ Then, there exist two sequences of compactly supported initial data $\{\omega_0^{i,n}\}_{n \ge 1}$ with  $\omega^{i,n}_0\rightarrow \omega_0$  in $L^2_{loc}$ for $i=1,2$, so that the corresponding unique Yudovich solutions converge in $L^2_{loc}$ to \emph{different} functions at $t=1$. In fact, there are infinitely many solutions to the $2D$ Euler equation which are initially $\omega_0$ which can be realized as a limit of compactly supported smooth solutions. 
\end{proposition}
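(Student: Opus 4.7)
The plan is to construct two sequences of compactly supported, bounded initial vorticities whose Yudovich solutions converge in $L^2_{loc}$ to distinct limits at $t=1$, and to extend this to infinitely many distinct Euler solutions realized as limits.

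Take a square patch and a Rankine disk:
\[
\omega_0^{1,n} := \chi_{[-n,n]\times[0,n]}, \qquad \omega_0^{2,n}:=\chi_{B((0,n/2),n/2)}.
\]
Both are bounded by $1$ and compactly supported, and a direct verification shows $\omega_0^{i,n}\to \chi_{y\ge 0}$ in $L^2_{loc}$: on any compact $K\subset[-M,M]^2$ and for $n>M$, the square patch agrees with $\chi_{y\ge 0}$ identically on $K$, while the disk covers $K\cap\{y\ge 2M^2/n\}$, giving an $L^2(K)$-error of order $1/\sqrt n$ for $\omega_0^{2,n}$.

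The second sequence is immediate: $\omega_0^{2,n}$ is rotationally symmetric about its center, so the Yudovich solution is stationary (a Rankine vortex) and $\omega^{2,n}(1,\cdot)\to \chi_{y\ge 0}$ in $L^2_{loc}$. For the first sequence I claim $\omega^{1,n}(1,\cdot)\to 1$ (the constant function) in $L^2_{loc}$. The key is scale invariance: writing $\tilde\omega:=\chi_{[-1,1]\times[0,1]}$ with velocity $\tilde u$ and flow $\tilde\Phi$, the rescaling $u^{1,n}(x,y)=n\,\tilde u(x/n,y/n)$ yields $\Phi^{1,n}_t(x,y)=n\,\tilde\Phi_t(x/n,y/n)$, so
\[
\omega^{1,n}(1,x,y)=\tilde\omega\bigl(\tilde\Phi_{-1}(x/n,y/n)\bigr).
\]
For each fixed $(x,y)$ one has $(x/n,y/n)\to(0,0)$, so Yudovich's log-Lipschitz continuity of $\tilde\Phi$ gives $\tilde\Phi_{-1}(x/n,y/n)\to\tilde\Phi_{-1}(0,0)$. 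Inside the unit patch, $\Delta\tilde\psi=1$ implies the velocity is CCW rigid rotation about the centroid $(0,1/2)$ at angular speed $1/2$ plus a bounded harmonic correction, so the backward image of $(0,0)$ at time $1$ lies near $(-\tfrac12\sin\tfrac12,\tfrac12(1-\cos\tfrac12))\approx(-0.24,0.06)$ strictly inside $[-1,1]\times[0,1]$; the reflection symmetries ($\tilde u_1$ even and $\tilde u_2$ odd in $x$) prevent the harmonic correction from pushing the backward image outside the patch. Therefore $\omega^{1,n}(1,x,y)\to 1$ pointwise, and by dominated convergence also in $L^2_{loc}$. Since $\chi_{y\ge 0}\ne 1$ on $\{y<0\}$, the two limits differ, giving non-uniqueness.

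For the infinitely-many claim, note that for each $c\in\mathbb{R}$ the pair $\omega^{(c)}(t,x,y):=\chi_{y\ge ct}$, $u^{(c)}:=(-\max(y-ct,0),\,c)$ is a weak Euler solution with initial data $\omega_0$ (the momentum equation reduces to $\nabla p=0$). This solution is realized as the Yudovich limit of $\omega_0^{2,n}$ augmented with a far-field disk $B((\mp n^2,0),\,n\sqrt{2|c|})$ of unit vorticity (sign chosen according to the sign of $c$): the correction is invisible in $L^2_{loc}$ (supported at distance $\sim n^2$ from any bounded set), yet it acts like a point vortex of circulation $2\pi|c|n^2$ and so induces a uniform $(0,c)+O(1/n^2)$ background velocity on bounded sets, translating the stationary Rankine vortex vertically at speed $c$. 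Varying $c$ yields infinitely many distinct $L^2_{loc}$ limits $\chi_{y\ge c}$ at $t=1$. The main technical obstacle throughout is the rigorous control of the backward Lagrangian flow through velocity fields that are only log-Lipschitz at patch boundaries, for which the Yudovich quasi-Lipschitz estimate is precisely what is needed.
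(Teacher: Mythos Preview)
Your Rankine-disk sequence is fine and is exactly one of the two sequences the paper uses. The problem is the square patch. The entire argument for $\omega^{1,n}(1,\cdot)\to 1$ rests on locating $(\tilde\Phi_1)^{-1}(0,0)$ strictly inside $[-1,1]\times[0,1]$, and your justification for this is not a proof. First, the decomposition ``rigid rotation about the centroid plus bounded harmonic correction'' is a statement about $\tilde\psi$ at the initial instant only; the rectangular patch is \emph{not} a rigidly rotating solution, so this picture does not persist for $t>0$, and the backward trajectory over $[0,1]$ sees a genuinely time-dependent velocity. Second, the reflection symmetry you invoke ($\omega$ even in $x_1$) is \emph{not} preserved by the Euler flow: vorticity is a pseudoscalar, so the reflection $x_1\mapsto -x_1$ sends $\omega\mapsto -\omega$, and only \emph{odd}-in-$x_1$ data keeps the symmetry. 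Hence for $t>0$ you have no symmetry constraint confining the backward orbit. Third, even at $t=0$ the ``harmonic correction'' is not small: a direct computation gives $\tilde u_1(0,0)=\tfrac12$, while the rigid-rotation part contributes only $\tfrac14$, so the correction is of the same size as the main term. Whether $(0,0)$ actually lies in the interior of the evolved patch at $t=1$ may well be true, but nothing you wrote establishes it.

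The paper sidesteps all of this by replacing the square with a Kirchhoff ellipse (semi-axes $2n$ and $n$, centered at $(0,n)$), which is an \emph{exact} rigidly rotating Yudovich solution with explicitly known angular speed $2/9$. One then reads off the evolved patch and its $L^2_{loc}$ limit by elementary geometry, with no need to control a nontrivial flow map. Your far-field-disk construction for the ``infinitely many'' claim is a reasonable idea, but as written it is also a sketch: the mutual interaction of the two patches (each inducing an $O(1)$ velocity on the other) must be controlled to show the near patch really converges to the translated half-plane. By contrast, varying the aspect ratio of the Kirchhoff ellipse already yields a one-parameter family of explicit solutions.
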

\begin{remark}
This cannot happen in the presence of $m-$fold symmetric with $m\geq 3$ and when the approximating sequences respect this symmetry (for example, under radial cut-off).
\end{remark}

\begin{proof}
First, let us take $\omega^{1,n}_0=\chi_{A_n}$ with $$A_n=\left\{\frac{x^2}{n^2}+\frac{(y-n)^2}{n^2}<1 \right\},$$ which is a disk of radius $n$ centered at $(0,n)$. It is easy to see that $\chi_{A_n}$ converges point-wise everywhere to $\chi_{y>0}$ (and hence locally in $L^2$). Moreover, $\omega^{1,n}(t)=\chi_{A_n}$ for all $t>0$ since this is actually a stationary Yudovich solution to the $2D$ Euler equation. Thus, in this sense, $\chi_{y > 0}$ can be seen to be stationary (in the weak sense). Now let us consider $\omega^{2,n}_0=\chi_{B_n}$ with $$B_n=\left\{\frac{x^2}{4n^2}+\frac{(y-n)^2}{n^2}<1\right\},$$ which is just an ellipse with horizontal axis $2n$ and vertical axis $n$ centered at $(0,n)$. As above, it is clear that $\chi_{B_n}$ converges point-wise everywhere to $\chi_{y>0}$. However, this time, $\omega^{2,n}(t)$ is not stationary but a rotating Kirchoff vortex which is rotating at the angular speed $\frac{2n^2}{(2n+n)^2}=\frac{2}{9}$ which is independent of $n$.  This means that at the time $t=\frac{9\pi}{4}$, we have  $\omega^{2,n}(t)=\chi_{C_n}$ with $$C_n=\left\{\frac{(y-n)^2}{4n^2}+\frac{x^2}{n^2}<1\right\},$$ and we see that $\omega^{2,n}(t=\frac{9\pi}{4})\rightarrow \chi_{ \mathbb{R}^2}$ everywhere as $n\rightarrow 0$ and we are done.  
\end{proof}
	
	\subsubsection*{Previous results}
	
	Yudovich's $L^1\cap L^\infty$ result was improved mainly in two directions, one which weakens the $L^1$-assumption\footnote{It is straightforward to see that the Yudovich theorem holds for vorticity in $L^p \cap L^\infty$ for any $p < 2$.}, and the other which allows the vorticity to be (slightly) unbounded. Regarding the latter, we just refer the interested reader to works \cite{BH, BK, Y2, E2, Vi1, MR}. In the other direction, Benedetto, Marchioro, and Pulvirenti have shown in \cite{BMP} that if the initial data $(u_0,\omega_0)$ satisfy $\omega_0 \in L^p \cap L^\infty(\mathbb{R}^2)$ and $|u_0(x)| \le C(1 + |x|^\alpha)$ for some $\alpha < 1$ with $\alpha p < 2$, there is a unique solution to the $2D$ Euler equation. Note that by imposing some restriction on the growth of the velocity at infinity, one can relax $L^1$ up to any $L^p$ with $p < \infty$, as $\alpha \rightarrow 0^+$. The authors also ask what happens for just $L^\infty$ vorticity. Comparing this to Theorem \ref{thm:main} below, we can treat velocities growing linearly in space, at the cost of imposing $m$-fold symmetry for $m \ge 3$. Indeed, by imposing just $2$-fold symmetry, one can obtain existence and uniqueness for $L^{p} \cap L^\infty$-vorticity (for any $p < \infty$), without restricting the growth of velocity. This can be done using the methods of this paper and will be discussed in detail somewhere else (but see recent \cite{CK}). 
	
	\subsection{Local well-posedness in a critical space and bounds on the velocity gradient}
	A natural question is whether one can actually prove bounds on the Lipschitz norm of $u$ even if $\omega$ has a non-smooth jump discontinuity at the origin. In fact, one of the basic themes of this paper is to study the question of whether $L^\infty$ estimates can be established for singular integrals without extra regularity assumptions in the sense of scaling using only symmetry conditions. To this end, we define a scale of spaces $\mathring{C}^{0,\alpha}$ by: 
	$$\|f\|_{\mathring{C}^{0,\alpha}}:=\|f\|_{L^\infty} + \| |\cdot|^\alpha f\|_{{C}^\alpha_*}~.$$
	These spaces have the same scaling as $L^\infty$ but, under a symmetry condition, we prove boundedness of the singular integrals arising from the operator which sends $\omega$ to $\nabla u$ via the Biot-Savart law and more general problems: 
	\begin{customthm}{B} \label{MainThm2}
		Let $f\in \mathring{C}^{0,\alpha}(\mathbb{R}^2)$ be $m-$fold symmetric for some $m\geq 3$. Then, $\nabla^2(-\Delta)^{-1}f\in\mathring{C}^{0,\alpha}(\mathbb{R}^2).$
	\end{customthm} 
	\begin{remark} Regarding the statement of the above theorem, we remark that: 
		\begin{itemize}
			\item We are not assuming that $f$ has compact support or has any decay at infinity. Therefore, it is already non-trivial to define the operator $(-\Delta)^{-1}$ for such functions. Essentially, it is not possible to do so for general functions which are not symmetric at all, or just 2-fold symmetric around the origin.
			\item There exists $f\in \mathring{C}^{0,\alpha}(\mathbb{R}^2)$, compactly supported, and 2-fold symmetric for which $\nabla^2(-\Delta)^{-1}f\not\in L^\infty(B_1(0)).$
		\end{itemize} 
	\end{remark}
	These bounds are crucial to prove existence and uniqueness for the SQG equation in a class of merely Lipschitz continuous initial data (see Theorem \ref{MainThm5} below) and allow us to propagate Lipschitz bounds on the velocity field in the $2D$ Euler equation even when the initial vorticity has a jump discontinuity. In fact, we  prove the following theorem: \begin{customthm}{C} \label{MainThm3}
		Let $\omega_0\in \mathring{C}^{0,\alpha}(\mathbb{R}^2)$ be $m-$fold symmetric for some $m\geq 3.$ Then the solution to the $2D$ Euler equation with initial data $\omega_0$ satisfies \begin{equation*}
		\begin{split}
		\V \omega(t)\V_{\mathring{C}^{0,\alpha}} \le C\exp(C\exp(Ct)),
		\end{split}
		\end{equation*} as well as \begin{equation*}
		\begin{split}
		\V \nabla u(t)\V_{L^\infty} \le C\exp(Ct),
		\end{split}
		\end{equation*} where $C > 0$ depends only on $\V \omega_0\V_{\mathring{C}^{0,\alpha}}$.
	\end{customthm}

	For future use, let us record an equivalent definition of the $\mathring{C}^{0,\alpha}$-norms: \begin{lemma}\label{lem:equiv}
		For any $0 < \alpha \le 1$ and $f \in \mathring{C}^{0,\alpha}(\mathbb{R}^2)$, we have \begin{equation}\label{eq:equiv}
		\begin{split}
		\frac{1}{2}\V f\V_{\mathring{C}^{0,\alpha}} \le \V  f\V_{L^\infty} + \sup_{x \ne x'}\left[\min\{ |x|^\alpha, |x'|^\alpha \} \frac{|f(x) - f(x')|}{|x-x'|^\alpha}\right] \le 2 \V f \V_{\mathring{C}^{0,\alpha}} 
		\end{split}
		\end{equation} as well as \begin{equation}\label{eq:equiv2}
		\begin{split}
		\frac{1}{2}\V f\V_{\mathring{C}^{0,\alpha}} \le \V  f\V_{L^\infty} + \sup_{x \ne x'}\left[\max\{ |x|^\alpha, |x'|^\alpha \} \frac{|f(x) - f(x')|}{|x-x'|^\alpha} \right]\le 2 \V f \V_{\mathring{C}^{0,\alpha}} .
		\end{split}
		\end{equation}
	\end{lemma}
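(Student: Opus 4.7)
The plan is to reduce both equivalences \eqref{eq:equiv} and \eqref{eq:equiv2} to a single algebraic decomposition combined with the elementary subadditivity estimate
\[
\bigl| |x|^\alpha - |x'|^\alpha \bigr| \le \bigl| |x| - |x'| \bigr|^\alpha \le |x-x'|^\alpha, \qquad 0 < \alpha \le 1,
\]
which follows from concavity of $t\mapsto t^\alpha$ on $[0,\infty)$ and the reverse triangle inequality. Throughout I will identify $\V |\cdot|^\alpha f\V_{C^\alpha_*}$ with the Hölder seminorm $[|\cdot|^\alpha f]_\alpha = \sup_{x\ne x'} ||x|^\alpha f(x)-|x'|^\alpha f(x')|/|x-x'|^\alpha$.

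For the upper bound in \eqref{eq:equiv} and \eqref{eq:equiv2}, fix $x\ne x'$ and assume without loss of generality $|x|\le |x'|$. The basic identity
\[
|x|^\alpha f(x) - |x'|^\alpha f(x') \;=\; |x|^\alpha \bigl( f(x) - f(x') \bigr) \;+\; \bigl( |x|^\alpha - |x'|^\alpha \bigr) f(x')
\]
together with the subadditivity above gives
\[
\min\{|x|^\alpha,|x'|^\alpha\}\, \frac{|f(x)-f(x')|}{|x-x'|^\alpha} \;\le\; [|\cdot|^\alpha f]_\alpha \;+\; \V f\V_{L^\infty},
\]
so the middle quantity in \eqref{eq:equiv} is at most $2\V f\V_{L^\infty} + [|\cdot|^\alpha f]_\alpha \le 2\V f\V_{\mathring{C}^{0,\alpha}}$. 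For \eqref{eq:equiv2} I apply the companion identity
\[
|x|^\alpha f(x) - |x'|^\alpha f(x') \;=\; |x'|^\alpha \bigl( f(x) - f(x') \bigr) \;+\; \bigl( |x|^\alpha - |x'|^\alpha \bigr) f(x),
\]
which produces the factor $|x'|^\alpha = \max\{|x|^\alpha,|x'|^\alpha\}$ and yields the same bound.

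For the lower bound, I run the same identities in reverse. Solving the first identity for $|x|^\alpha(f(x)-f(x'))$ and using the subadditivity bound on $||x|^\alpha-|x'|^\alpha|$ gives
\[
\frac{||x|^\alpha f(x) - |x'|^\alpha f(x')|}{|x-x'|^\alpha} \;\le\; \min\{|x|^\alpha,|x'|^\alpha\} \frac{|f(x)-f(x')|}{|x-x'|^\alpha} \;+\; \V f\V_{L^\infty},
\]
and taking supremum in $x\ne x'$ bounds $[|\cdot|^\alpha f]_\alpha$, hence $\V f\V_{\mathring{C}^{0,\alpha}}$, by twice the middle quantity of \eqref{eq:equiv}. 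The analogous computation with the second identity handles \eqref{eq:equiv2}; alternatively one notes the trivial inequality $\min\le\max$, which shows the middle quantity of \eqref{eq:equiv2} dominates that of \eqref{eq:equiv}, giving $\tfrac12\V f\V_{\mathring{C}^{0,\alpha}}\le $ (middle of \eqref{eq:equiv2}) for free.

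There is no substantive obstacle here; the only care needed is bookkeeping the constants $\tfrac12$ and $2$, which come out cleanly because the error terms in the decomposition are uniformly controlled by $\V f\V_{L^\infty}$ (already part of $\V f\V_{\mathring{C}^{0,\alpha}}$). The whole argument is symmetric between \eqref{eq:equiv} and \eqref{eq:equiv2} up to the choice of the weight factored out, which is what lets one cover both statements at once.
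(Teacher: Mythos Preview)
Your proof is correct and follows essentially the same approach as the paper: both hinge on the single identity $|x|^\alpha f(x) - |x'|^\alpha f(x') = |x|^\alpha(f(x)-f(x')) + (|x|^\alpha - |x'|^\alpha)f(x')$ together with the subadditivity bound $||x|^\alpha - |x'|^\alpha| \le |x-x'|^\alpha$, with \eqref{eq:equiv2} obtained by swapping the roles of $x$ and $x'$ in the identity. You spell out the bookkeeping a bit more explicitly than the paper does, but the argument is the same.
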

	
	\begin{proof}
		Take two points $x \ne x'$ in $\mathbb{R}^2$ and without loss of generality assume that $|x| \le |x'|$. Note the following equality: \begin{equation*}
		\begin{split}
		\frac{|x|^\alpha f(x) - |x'|^\alpha f(x')}{|x-x'|^\alpha} = |x|^\alpha \frac{f(x) - f(x')}{|x-x'|^\alpha} + \frac{|x|^\alpha - |x'|^\alpha}{|x-x'|^\alpha} f(x'). 
		\end{split}
		\end{equation*} Noting that \begin{equation*}
		\begin{split}
		\frac{\left||x|^\alpha - |x'|^\alpha\right|}{|x-x'|^\alpha} \le 1 
		\end{split}
		\end{equation*} holds, \eqref{eq:equiv} readily follows. Then \eqref{eq:equiv2} follows simply by switching the roles of $x$ and $x'$ in the above equality. 
	\end{proof}

	In the following we shall use Lemma \ref{lem:equiv} several times, sometimes implicitly. 
	
	\subsection{Scale-invariant solutions for $2D$ Euler}
	This extension to the Yudovich theory contains some interesting classes of solutions. A distinguished class is the case of bounded and radially homogeneous vorticity, i.e. $\omega$ satisfying $\omega(x) = h(x/|x|)$. Homogeneity is propagated by the Euler dynamics, and, by uniqueness, the system reduces to a simple but nontrivial $1D$ equation on the unit circle. In particular, $h$ satisfies the following $1D$ active scalar equation:
	
	$$\partial_t h(\theta,t)+ 2 H(\theta,t)\partial_\theta h(\theta,t)=0~,$$
	$$4H+H''=h~.$$
	This $1D$ system is even more regular than the $2D$ Euler equation, in the sense that the advecting velocity field is two derivatives more regular than the advected quantity. As a result, $\partial_\theta h$ can grow at most exponentially in time, in contrast with the double exponential rate for the case of $2D$ Euler. We show that this exponential rate is sharp when we have a boundary available, and rule it out in the absence of boundaries, under some extra assumptions. These are shown via establishing that there is a trend to equilibrium as time goes to infinity. We note that these solutions, while being infinite energy, can be placed into a natural uniqueness class, which is the class of bounded vorticities satisfying the symmetry assumption. As a consequence, they can also be used to show that solutions with finite energy experience growth of angular derivatives as $t\rightarrow\infty$. In fact, using our analysis of the $1D$ model, we can prove that there exist solutions to the $2D$ Euler equation with Lipschitz velocity field and which are smooth in the angular variable for all times for which the angular derivative of $\omega,$ $\partial_\theta\omega$ experience (almost) quadratic-in-time $L^\infty$ growth. When a boundary is present, this growth can actually be (at least) exponential in time. This follows simply from analyzing the behavior of the $1D$ system. Hence, we emphasize, while the solutions of the $1D$ system are infinite energy, any growth of $\partial_\theta h$ implies the existence of compact support $\omega$ with $\partial_\theta\omega$ growing at least as fast. Formally, we state it as follows: \begin{customthm}{D} \label{MainThm4}
		Fix some $m \ge 3$, and let $\omega^{1D}(t,x) \in L^\infty_{loc}\mathring{C}^{0,\alpha}(\mathbb{R}^2) $ be a $m$-fold symmetric and 0-homogeneous solution to the $2D$ Euler equation. Take any $m$-fold symmetric initial data $\omega_0 \in \mathring{C}^{0,\alpha}(\mathbb{R}^2)$ such that $\omega_0 - \omega^{1D}(0,\cdot)$ belongs to $C^\alpha(\mathbb{R}^2)$ and vanishes at the origin. Then, for all $t \in [0,\infty)$, we have \begin{equation*}
		\begin{split}
		\V \omega(t)\V_{\mathring{C}^{0,\alpha}} \ge \V \omega^{1D}(t)\V_{\mathring{C}^{0,\alpha}},
		\end{split}
		\end{equation*} where $\omega(t)$ is the unique solution associated with $\omega_0$. 
	\end{customthm} The above theorem gives compactly supported initial vorticity whose angular derivatives grow (almost) quadratically in time, and exponentially in time in the presence of a boundary.  

	\subsubsection{Previous examples of infinite energy solutions}
	The consideration of radially homogenous vorticity is comparable to the well-known \textit{stagnation-point similitude} ansatz, which in the case of 2D takes the form \begin{equation}\label{eq:stagnation-point_similitude}
	\begin{split}
	u(t,x,y) = (f(t,x),-y \pr_x f(t,x) )~
	\end{split}
	\end{equation} (note that $u$ satisfies the divergence free condition), and when inserted in the $2D$ Euler equation, one obtains the so-called Proudman-Johnson equation \cite{PJ} \begin{equation}\label{eq:PJ}
	\begin{split}
	\pr_t \pr_x^2 f = \pr_x f \cdot \pr_x^2 f - f \pr_x^3 f~. 
	\end{split}
	\end{equation} To the best of our knowledge, in the context of the Euler equations, \eqref{eq:PJ} was first studied by Stuart \cite{St} who showed formation of singularities in finite-time. This is one of the unfortunate sides of the ansatz \eqref{eq:stagnation-point_similitude} because clearly the fact that these solutions form singularities in finite time has no bearing on singularity formation in the $2D$ Euler equation (since singularities \emph{cannot} form in finite time for the $2D$ Euler equation). In this sense, the singularity is "coming from infinity" and is a consequence of the solution being of infinite energy. 
	
	The ansatz \eqref{eq:stagnation-point_similitude} can be inserted in many other nonlinear evolution equations to define a $1D$ system. For an example, inserting it to the SQG (surface quasi-geostrophic) equation \cite{CaCo}, one obtains a particular case of the De Gregorio model (discussed in Section \ref{sec:SQG}). See also a recent work of Sarria and Wu \cite{SaWu} where they study the $2D$ Boussinesq model with \eqref{eq:stagnation-point_similitude}. 
	
	In higher dimensions, one can similarly put \begin{equation*}
	\begin{split}
	u(t,x',x_n) = ( f(t,x'), - x_n \n \cdot f(t,x') )~, \qquad x' = (x_1,\cdots,x_{n-1})~. 
	\end{split}
	\end{equation*} This was suggested in Gibbon, Fokas, and Doering \cite{GFD} in the context of the 3D Euler equations, and was shown to blow-up in finite time in the papers of Childress, Ierley, Spiegel, and Young \cite{CISY} and Constantin \cite{Con}. Notice that in all these cases, the vorticity is never a bounded function; indeed, from \eqref{eq:stagnation-point_similitude}, one sees that $\o(x,y) = -y f''(x)$ grows linearly in space. 
	Our radial homogeneity ansatz is special in this regard, and it seems that having bounded vorticity is the key to having \textit{uniqueness}.

	\subsection{The SQG equation}
	Above, we discussed how, while working in the critical spaces $\mathring{C}^{0,\alpha}$ and under a symmetry condition, one can derive information about finite-energy solutions from information about the dynamic of scale-invariant solutions. We now explain how this can also be done for the SQG equation.  
	Recall the $2D$ surface quasi-geostrophic equation:
	\begin{equation*}
	\begin{split}
	&\partial_t\Theta+u\cdot\nabla\Theta=0~, \\
	&u=\nabla^\perp (-\Delta)^{-\frac{1}{2}}\Theta~.
	\end{split}
	\end{equation*}
	This system bears great resemblance to the $2D$ Euler equation except that the Biot-Savart law (the relation between $\Theta$ and $u$) is more singular than in the Euler case. While in the $2D$ Euler equation, the vorticity is one derivative better than $\omega,$ here $\Theta$ and $u$ are at the same level of regularity. For this reason, the global regularity problem for the SQG equation is still open. Indeed, to prove regularity for the SQG equation, one would basically need an $L^\infty$ bound on $\nabla u$, just as in the Euler equation, but this time bounds on the $L^\infty$ norm of $\nabla u$ are very far from the obvious conservation laws in the equation--$L^p$ conservation of $\Theta$ which only leads to $L^p$ bounds on $u.$ As a way to approach this problem, one may first try to prove local well-posedness for the SQG equation in the critical space of Lipschitz $\Theta$. Unfortunately, this seems to be impossible to do directly due to the presence of singular integrals in the map from $\nabla \Theta$ to $\nabla u$. One can actually show that the SQG at least\footnote{Though, one can imagine that a little work using the framework of \cite{EM1} will yield strong ill-posedness of the SQG in the Lipschitz class.} "mildly ill-posed" in the Lipschitz class in the sense given in \cite{EM1}. However, using our intuition from the Euler case, one could hope to establish local well-posedness in a critical space which includes "scale-invariant" solutions, then derive the equation for scale-invariant solutions and study its properties, and finally deduce some growth mechanism, like finite-time singularity formation for the SQG system with finite energy. What we will show here is that local well-posedness can indeed be established and a $1D$ model is derived we then prove a conditional result: if singularities form in finite time for the $1D$ model, then there exists data with compact support in the local well-posedness class (a subset of the class of Lipschitz functions) whose unique solution breaks down in finite time. An interesting open problem is now to establish whether the $1D$ model breaks down in finite time. The following theorem sums up the discussion above: 
	\begin{customthm}{E}\label{MainThm5}
		For $0<\alpha<1$, let $X^\alpha$ be the class of functions $f:\mathbb{R}^2\rightarrow \mathbb{R}$ for which:
		\begin{enumerate}
			\item $\nabla f\in \mathring{C}^{0,\alpha}(\mathbb{R}^2)$,
			\item $f$ is $m-$fold rotationally symmetric and odd with respect to an axis, for some $m \ge 3$.
		\end{enumerate}
		Then, for every $\Theta_0\in X^\alpha,$ there exists $T= T(\| \Theta_0\|_{\mathring{C}^{0,\alpha}}) > 0$ and a unique solution to the inviscid SQG equation  $\Theta\in C([0,T^*); X^\alpha)$ and $\Theta(0,\cdot)=\Theta_0.$ 
		
		Moreover, there is a one-dimensional evolution equation defined on the unit circle describing radially homogeneous solutions to the $2D$ SQG equation. Singularity formation for smooth enough solutions ($C^{2,\alpha}(S^1)$ is sufficient) to this $1D$ equation implies the existence of $\Theta_0\in X^{\alpha}$ with compact support whose solution of the SQG equation $\Theta(t)$ blows up in finite time --  $\limsup_{t \rightarrow T^*} \V \nabla\Theta(t)\V_{L^\infty} = + \infty$ for some finite $T^* > 0$. 
	\end{customthm}  
	
	\subsection{Outline of the paper}
	The structure of the rest of this paper is as follows. Section \ref{sec:EU} is devoted to the analysis of the $2D$ Euler equation: we begin with providing some simple and explicit solutions covered by our analysis. Then we proceed to prove our first main result, which is the global existence and uniqueness of the solution with bounded and $m$-fold rotationally symmetric vorticity. After that we establish global existence in the critical spaces $\mathring{C}^{\alpha}$. We conclude the section with some general bounds in these spaces, which will be useful in the following sections.
	
	Then, in Section \ref{sec:homog}, we study the special case of radially homogeneous solutions to the $2D$ Euler equation. This consideration gives a $1D$ active scalar equation, and we establish some basic properties of the model. In particular, we show that upon a few simple assumptions on the initial data, its solution converges to an equilibrium state. Moreover, by considering measure valued vorticities, we obtain existence of time periodic and quasi-periodic solutions to $2D$ Euler. 
	
	In Section \ref{sec:SQG} we study the SQG equation. We establish a local well-posedness in the critical spaces, which contains the class of radially homogeneous solutions described by a $1D$ model. We then show a conditional finite time blow-up result.

\section{Existence and uniqueness for the $2D$ Euler}\label{sec:EU}

Let us state the main result of this section. Assume that the initial vorticity $\omega_0$ is just bounded and $m$-fold rotationally symmetric with some $m \ge 3$. Then, it shows that under this setting, the velocity can be uniquely recovered from the vorticity (so that the $2D$ Euler system can be formulated in terms of the vorticity alone), and that this vorticity formulation is globally well-posed. 

\begin{theorem}\label{thm:main}
	Assume that $\omega_0$ is a bounded and $m$-fold rotationally symmetric function on the plane, for some $m \ge 3$. Then, there is a unique global solution to the $2D$ Euler equation \begin{equation*}
	\begin{split}
	\pr_t \omega + u \cdot \n \o = 0~, 
	\end{split}
	\end{equation*} with $\omega \in L^\infty([0,\infty); L^\infty(\mathbb{R}^2) )$ and $m$-fold rotationally symmetric. Here, $u = u(\o)$ is the unique solution to the system \begin{equation*}
	\begin{split}
	\nabla \times u &= \o~,\qquad \nabla \cdot u = 0~,
	\end{split}
	\end{equation*} under the assumptions $|u(x)| \le C|x|$ and $m$-fold rotationally symmetric.\footnote{We say that a vector-valued function $v: \mathbb{R}^2 \rightarrow \mathbb{R}^2$ is $m$-fold symmetric when $v(O_{2\pi/m}x) = O_{2\pi/m}(v(x))$ for all $x \in \mathbb{R}^2$.} It satisfies the principal value version of the Biot-Savart law: \begin{equation*}
	\begin{split}
	u(t,x) = \lim_{R \rightarrow \infty} \frac{1}{2\pi} \int_{|y| \le R}  \frac{(x-y)^\perp}{|x-y|^2} \o(t,y) dy~. 
	\end{split}
	\end{equation*}
\end{theorem}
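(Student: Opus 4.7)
The plan breaks into three tasks: reconstruct the velocity uniquely from the vorticity, establish existence by approximation with compactly supported symmetric data, and prove uniqueness by a localized Yudovich-type argument. The enabling fact throughout is that for $m \ge 3$ the $m$-fold symmetrization of the Biot--Savart kernel gains enough decay at infinity to render all the integrals absolutely convergent.

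\textbf{Velocity reconstruction.} Using the symmetry of $\o$, I would write
\begin{equation*}
u(x) \;=\; \frac{1}{m}\lim_{R\to\infty}\int_{|y|\leq R}\sum_{i=1}^{m}K\bigl(x - O_{2\pi/m}^i y\bigr)\,\o(y)\,dy,
\end{equation*}
where the cancellation estimate $\bigl|\sum_i K(x-O_{2\pi/m}^i y)\bigr|\leq C|x|^{m-1}/|y|^m$ for $|y|\geq 2|x|$ (quoted in the excerpt from \cite{E1}) makes the far field absolutely integrable precisely when $m\geq 3$. Splitting at $|y|=2|x|$ yields $|u(x)|\leq C\V\o\V_{L^\infty}|x|$, and it is direct from the formula that $\n\times u=\o$, $\n\cdot u=0$ distributionally and that $u$ is $m$-fold symmetric in the vector sense. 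Uniqueness in the admissible class is a Liouville argument: any difference $v$ is curl-free and divergence-free, hence has harmonic components; linear growth forces these to be affine, and $m$-fold vector symmetry with $m\geq 3$ forces them to vanish.

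\textbf{Existence.} I would approximate $\o_0$ by radial cutoffs $\o_0^{(n)}:=\o_0\,\chi_{|x|\leq n}$, which remain $m$-fold symmetric and lie in $L^1\cap L^\infty$. Yudovich's theorem provides unique global solutions $\o^{(n)}(t)$ with $\V\o^{(n)}(t)\V_{L^\infty}\leq\V\o_0\V_{L^\infty}$, and these preserve the $m$-fold symmetry by uniqueness. On any compact set the velocities $u^{(n)}$ are uniformly log-Lipschitz: splitting the Biot--Savart integral into near/middle/far contributions, the near piece gives the classical $|x-x'|\log(1/|x-x'|)$ modulus, while the far piece is tamed by the $|x|^{m-1}/|y|^m$ decay. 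A weak-$*$ compactness argument for $\o^{(n)}$ combined with local uniform convergence of $u^{(n)}$ then produces a limit $(\o,u)$ solving the vorticity equation, with $|u(x)|\leq C|x|$ and the $m$-fold symmetry inherited from the $n$-level bounds.

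\textbf{Uniqueness and main obstacle.} This is the delicate part, since the standard Yudovich $L^2$ argument integrates over all of $\mathbb{R}^2$ and is unavailable for non-decaying vorticity. Given two solutions with identical data, let $\Phi_1,\Phi_2$ be their flow maps, which are well-defined because each $u_i$ is log-Lipschitz on bounded sets. Fix $R>0$ and set $\rho(t):=\sup_{|x|\leq R}|\Phi_1(t,x)-\Phi_2(t,x)|$. The bound $|u_i(x)|\leq C|x|$ confines $\Phi_i([0,t]\times B_R)$ to $B_{Re^{Ct}}$, so the comparison is effectively local. On this ball I would combine the log-Lipschitz modulus of $u_2$ with an estimate of the form $\V u_1-u_2\V_{L^\infty(B_{Re^{Ct}})}\leq C_t\,\rho(t)^{\delta}$ for some $\delta\in(0,1)$, obtained by truncating the Biot--Savart kernel beyond $B_{Re^{Ct}}$ using the $|x|^{m-1}/|y|^m$ cancellation and then running the classical $L^p$ Yudovich trick on the bounded region. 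This yields an Osgood-type inequality $\rho'(t)\leq C_t\,\rho(t)\bigl(1+\log^+\tfrac{1}{\rho(t)}\bigr)$ with $\rho(0)=0$, forcing $\rho\equiv 0$ and hence $\o_1=\o_2$ on $B_R$; since $R$ is arbitrary, the two solutions agree everywhere. Global-in-time extension is then automatic because $\V\o(t)\V_{L^\infty}$ is conserved along the flow, so no blow-up criterion needs checking.
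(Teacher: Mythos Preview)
Your velocity reconstruction is essentially the paper's; the Liouville step is equivalent to the paper's Lemma on the Poisson problem. Your existence scheme (radial cutoffs, Yudovich, weak-$*$ limit) is different from the paper's Picard iteration, but it is a reasonable route and should work with the usual care in passing to the limit in the transport equation.

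The uniqueness argument, however, has a real gap. Your control quantity $\rho(t)=\sup_{|x|\le R}|\Phi_1(t,x)-\Phi_2(t,x)|$ is purely local, but the velocity difference at a point $x\in B_{Re^{Ct}}$ feels the vorticity difference on all of $\mathbb{R}^2$. Writing $u_1(x)-u_2(x)=\int [K(x-\Phi_1(z))-K(x-\Phi_2(z))]\,\omega_0(z)\,dz$, the far-field piece $|z|\gg Re^{Ct}$ is governed by $|\Phi_1(z)-\Phi_2(z)|$ at \emph{large} $z$, which is not captured by $\rho(t)$. If instead you simply bound the far-field contribution using $|\omega_1-\omega_2|\le 2\V\omega_0\V_{L^\infty}$ and the kernel decay $|x|^{m-1}/|y|^m$, you get a term of order $\V\omega_0\V_{L^\infty}\,Re^{Ct}$, not $C_t\rho(t)^\delta$. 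So the claimed inequality $\V u_1-u_2\V_{L^\infty(B_{Re^{Ct}})}\le C_t\rho(t)^\delta$ cannot hold, and the Osgood argument does not close.

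The paper fixes this by replacing the local $\sup$ with a \emph{global} weighted integral
\[
f(t)=\int_{\mathbb{R}^2}\frac{|\Phi_1(t,x)-\Phi_2(t,x)|}{|x|}\cdot\frac{|\omega_0(x)|}{1+|x|^3}\,dx,
\]
which is finite because $|\Phi_i(t,x)|\sim|x|$ makes the ratio bounded and the weight $|\omega_0|/(1+|x|^3)$ is a finite measure. The kernel is then symmetrized in $y$ when $|y|\gg|x|$ \emph{and} in $x$ when $|x|\gg|y|$, so both far-field regimes are integrable against the weight; Jensen with respect to this finite measure converts the pointwise log-Lipschitz bound into $f(t)\le C\int_0^t f(s)\log(1+1/f(s))\,ds$. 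The iteration scheme for existence reuses exactly the same estimate. The key idea you are missing is that uniqueness here cannot be localized: one must track the trajectory discrepancy globally with a scale-invariant weight, and the $m$-fold cancellation must be invoked in both variables.
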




\subsection{Explicit solutions}

Before going into the proof of Theorem \ref{thm:main}, we give a few classes of very simple solutions, whose \textit{uniqueness} is covered by our extension of the Yudovich theory. Some further examples will be provided in Section \ref{sec:homog}.

\begin{example}[Radial eddies]
	Radial and stationary solutions in $2D$ are widely known, see for instance the book of Bertozzi and Majda \cite[Chap. 2]{MB}. Take some vorticity function which depends only on the radius; i.e. $\o(x) = f(|x|)$ for some $f$ smooth and compactly supported. Then, it defines a stationary solution with the velocity obtained via the following radial Biot-Savart law: \begin{equation}\label{eq:radial_BS}
	\begin{split}
	u(x) = \frac{x^\perp}{|x|^2} \int_0^{|x|} s f(s) ds~. 
	\end{split}
	\end{equation} 
	
	In the simplest case when $f$ is the characteristic function on $[0,R]$, one sees that $u(x) = x^\perp/2$ up to $|x| \le R$ and then decays as $|x|^{-1}$. One may consider the limiting case when $R \rightarrow \infty$: our result implies that, $u(x):= x^\perp/2$ on $\mathbb{R}^2$ is the \textit{unique} solution to the Euler equation, as long as we require $m$-fold symmetry for some $m \ge 3$ and the growth condition $|u(x)| \le C|x|$. More generally, the expression \eqref{eq:radial_BS} defines the unique solution, with $f$ just in $L^\infty(\mathbb{R})$. 
\end{example}

We now turn to some other stationary solutions, which does not seem to be well known. 

\begin{example}[Stationary solutions with odd symmetry]\label{ex:stationary}
	Take the disk with some radius $R > 0$. For each integer $m \ge 1$, consider the following vorticity configuration (in polar coordinates): 
	\begin{equation}\label{eq:stationary}
	\begin{split}
	\omega^{(m)} (r,\theta) = \left\{ \begin{array}{rl}
	+1 &\mbox{ if  $\theta \in \cup_{j=0}^{m-1}[2j\pi/m,(2j+1)\pi/m)$}~, \\
	-1 &\mbox{ otherwise .}
	\end{array} \right.
	\end{split}
	\end{equation} These solutions are stationary for any $m \ge 1$; indeed, the Euler equation preserve odd symmetry of the vorticity along a line, and for each line of discontinuity of $\omega^{(m)}$, it is odd so that the fluid particles cannot cross such a line. Now we take the limit $R \rightarrow \infty$: our main result implies that, for $m \ge 3$, the stationary one is the \textit{unique} solution to the Euler equation. 	
\end{example}

Taking any linear combination of the above two examples, we get another class of interesting solutions, which simply rotates around the origin with constant angular speed. 

\begin{example}[Rotating solutions]
	For $ m \ge 3$, take any two constants $c_1$ and $c_2$, and define using polar coordinates \begin{equation*}
	\begin{split}
	\omega (r,\theta) = \left\{ \begin{array}{rl}
	c_1 &\mbox{ if  $\theta \in \cup_{j=0}^{m-1}[2j\pi/m,(2j+1)\pi/m)$}~, \\
	c_2 &\mbox{ otherwise .}
	\end{array} \right.
	\end{split}
	\end{equation*} on $\mathbb{R}^2$. Then, the resulting unique solution rotates with angular velocity $(c_1 + c_2)/2$. (In particular, when $c_1 = c_2 = 1$, the vorticity equals 1 everywhere, which should rotate with angular speed 1.) This is based on the previous example and the following simple fact:  if $(u,\omega)$ be a solution of \eqref{eq:Euler}, then for any constant $c \in \mathbb{R}$, \begin{equation}\label{eq:rotating_solution}
	\begin{split}
	\tilde{\omega}(t,x)&:= \omega(t,O_{ct}^{-1}x) + 2c \\
	\tilde{u}(t,x)&:= O_{ct} u(t,O_{ct}^{-1}x) + cx^\perp
	\end{split}
	\end{equation}
	is also a solution, where $O_\theta$ denotes the matrix of counterclockwise rotation by angle $\theta$.
\end{example}

\begin{remark}
	In all of the above examples, the solutions are well-defined in terms of the Yudovich theory on the disk $B(0,R)$ for any $R> 0$. In this setting, it is an easy matter to show that the particle trajectories of these Yudovich solutions converge to those of ours in the limit $R\rightarrow \infty$ uniformly on compact sets. 
\end{remark}

	\begin{remark}
	In an interesting recent work \cite{LS1,LS2}, the authors have successfully classified all stationary solutions to the $2D$ Euler equation, which has the form \begin{equation*}
	\begin{split}
	u = \nabla^\perp(r^\lambda \Psi(\theta) )
	\end{split}
	\end{equation*} for some $\lambda \in \mathbb{R}$, and with some regularity assumption on $\Psi$. The above stationary solutions correspond to the case $\lambda = 2$ but they have escaped the classification in \cite{LS1,LS2}; our understanding is that the authors work under the situation where $u \in C^1$ on the unit circle (which is natural to do in their framework), while our solutions only satisfy $ u \in C^{0,1}$. 
\end{remark}

\begin{example}[Counterexamples for $m < 3$]
	By means of a few explicit examples, we show that the symmetry assumption for some $m \ge 3$ is essential. First, we consider the stationary solutions \eqref{eq:stationary} from Example \ref{ex:stationary} for $m = 1$ and $2$. In the case $m = 1$ (i.e. odd vorticity), the velocity does not vanish at the origin. Actually, it is easy to check that $|u(0,0)| \approx CR$ as $R \rightarrow \infty$, and therefore there cannot exist a limiting solution. Indeed, one has the following two-parameter family of velocity fields associated: \begin{equation*}
	\begin{split}
	u = \begin{pmatrix}
	|x_2| \\ 0
	\end{pmatrix} + c_1 \begin{pmatrix}
	-x_1 \\ x_2
	\end{pmatrix} + c_2 \begin{pmatrix}
	x_2 \\ x_1
	\end{pmatrix},
	\end{split}
	\end{equation*} where $c_1, c_2$ are some constants. Observe that these velocity fields are Lipschitz continuous and grow linearly in space. This is simply the well-known nonuniqueness issue coming from harmonic polynomials $x_1x_2$ and $x_1^2 - x_2^2$, which makes it impossible to determine uniquely the stream function.  Next, when $m = 2$, these solutions for finite $R$ are often called the Bahouri-Chemin solutions after the work \cite{BC}, and the limit $R \rightarrow \infty$ is not covered by our analysis. In this case, the associated velocity vector field near the origin satisfies $|u(x)| \approx C|x|(\ln R - \ln |x|)$ (see also \eqref{eq:counterex_Poisson}), which is in contrast with the estimate $|u(x)| \le C|x|$ available when $m \ge 3$. Actually one may compute that $C(R) \approx C\ln R$ as $R \rightarrow +\infty$, and hence it is not possible to make sense of the ``infinite'' Bahouri-Chemin solution. Note that the harmonic polynomials $x_1x_2$ and $x_1^2 - x_2^2$ are 2-fold symmetric, so that the above nonuniqueness issue is still present in this case. 
\end{example}

\begin{remark}
	In \cite{EJill}, we utilized a suitably smoothed-out version of the Bahouri-Chemin solution to give a proof of the ill-posedness of the $2D$ Euler equation in Sobolev spaces scale as $\V \o \V_{L^\infty}$. In view of this, Theorem \ref{thm:main} confirms that, in a sense, the Bahouri-Chemin scenario is the ``only option'' for showing such an ill-posedness result, at least near the origin. 
\end{remark}

\subsection{Proof of Theorem \ref{thm:main}}

The proof proceeds as follows. First, we deal with the issue of uniquely solving for the stream function $\Psi$ in $\Delta \Psi = \omega$  in our setup, which is equivalent with uniquely recovering the velocity from the vorticity. The existence is then easily shown thanks to the Biot-Savart formula. 

Once this is done, we have a well-defined vorticity formulation of the $2D$ Euler equation, and we first show uniqueness, using the Osgood type uniqueness for ODEs. Then the existence can be actually shown along the similar lines. These arguments are indeed an adaptation of an elegant proof of the Yudovich theorem given in the book \cite[Chap. 2]{MP}. 

Let us actually prove a slightly more general version of the uniqueness result than we need, which hopefully helps to clarify the situation. 

\begin{lemma}[Uniqueness of the Poisson problem]\label{lem:unique_Poisson}
	Assume that a function $\Psi:\mathbb{R}^2 \rightarrow \mathbb{R}$ in $\dot{W}^{2,p}(\mathbb{R}^2)$ for some $1 <p \le \infty$ satisfies \begin{itemize}
		\item $\Delta \Psi = 0$, 
		\item for all $ x\in \mathbb{R}^2$, $|\Psi (x)| \le C|x|^2(1+|x|^{1-\epsilon})$ for some $\epsilon$, and 
		\item for all sufficiently large $R > 0$, \begin{equation*}
		\begin{split}
		\int_{-\pi}^{\pi} \Psi(R\theta) \exp({\pm i\theta}) d\theta = 0 = \int_{-\pi}^{\pi} \Psi(Re^{i\theta}) \exp({\pm 2i\theta}) d\theta~.
		\end{split}
		\end{equation*}
	\end{itemize} Then $\Psi$ is identically zero. 
\end{lemma}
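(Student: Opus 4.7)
The plan is to combine elliptic regularity with a Liouville-type growth bound to reduce $\Psi$ to a harmonic polynomial of degree at most two, and then eliminate each remaining coefficient using the three hypotheses.

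First I would observe that since $\nabla^2 \Psi \in L^p_{loc}$ and $\Delta\Psi = 0$ holds distributionally, Weyl's lemma (equivalently, standard elliptic regularity for the Laplacian) promotes $\Psi$ to a smooth, in fact real-analytic, function on $\mathbb{R}^2$. The growth assumption reads $|\Psi(x)| \le C(1+|x|)^{3-\epsilon}$, so the classical Liouville-type theorem for harmonic functions with polynomial growth (which follows from iterated mean value / Poisson kernel bounds on derivatives) forces $\Psi$ to be a harmonic polynomial of degree at most $2$. The space of such polynomials is five-dimensional, spanned by $1$, $x_1$, $x_2$, $x_1^2 - x_2^2$, and $2x_1x_2$; in polar form,
\[ \Psi(Re^{i\theta}) = a_0 + R(a_1\cos\theta + a_2\sin\theta) + R^2(a_3\cos 2\theta + a_4\sin 2\theta). \]

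The three hypotheses now peel off these coefficients one by one. By orthogonality of trigonometric functions on $[-\pi,\pi]$, each Fourier-mode hypothesis pairs with a single degree: the vanishing of $\int \Psi(Re^{i\theta}) e^{\pm i\theta}\,d\theta$ forces $a_1 = a_2 = 0$, and the vanishing of $\int \Psi(Re^{i\theta}) e^{\pm 2i\theta}\,d\theta$ forces $a_3 = a_4 = 0$. Finally, the pointwise bound $|\Psi(x)| \le C|x|^2(1+|x|^{1-\epsilon})$ evaluated at $x = 0$ gives $\Psi(0) = 0$, hence $a_0 = 0$. All five coefficients vanish, so $\Psi \equiv 0$.

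I do not anticipate a serious obstacle: the step that requires a little care is the upgrade from the $\dot W^{2,p}$ assumption to classical smoothness, and this is standard once one notes that distributional harmonicity suffices to apply Weyl's lemma. The remainder is matching a handful of linear conditions against a five-dimensional space of harmonic polynomials, so each hypothesis in the statement has a clear and non-redundant role (with the pointwise bound near the origin, in fact, already implying $a_0 = a_1 = a_2 = 0$, so that the $e^{\pm 2i\theta}$ integral condition is the essential ingredient for uniqueness in the presence of degree-two harmonics).
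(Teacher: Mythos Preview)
Your proof is correct, and it takes a genuinely different route from the paper's argument. The paper does not invoke the Liouville theorem to reduce to a finite-dimensional space of harmonic polynomials. Instead, it works directly with the Poisson representation formula
\[
\Psi(x) = \frac{R^2 - |x|^2}{2\pi R} \int_{\partial B(0,R)} \frac{\Psi(y)}{|y-x|^2}\, dS(y),
\]
and then successively subtracts zero---using $\Psi(0)=0$, then the vanishing of the first Fourier modes, then the second---to rewrite the kernel so that it decays like $R^{-5}$ in $|y|$ for fixed $x$. Combined with the growth bound $|\Psi(y)| \le C R^{3-\epsilon}$ on $\partial B(0,R)$, this gives $|\Psi(x)| \le C' |x|^3 R^{-\epsilon} \to 0$ as $R \to \infty$.

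Your approach is cleaner and more structural: once Weyl's lemma and the polynomial Liouville theorem reduce the problem to a five-dimensional space, the hypotheses become linear constraints that are trivially matched. The paper's approach, on the other hand, is more self-contained and has the side benefit that the same kernel-subtraction technique is reused verbatim in the subsequent lemmas (e.g., to show the principal-value Biot--Savart integral converges and that $|u(x)|\le C|x|$), so in context it serves as a warm-up for the analysis that follows.
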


In particular, this implies that the Poisson problem \begin{equation}\label{eq:Poisson}
\begin{split}
\Delta \Psi = \omega
\end{split}
\end{equation} is uniquely solvable in the class of functions that grows at most quadratically and has $m$-fold rotational symmetry, up to a constant. 

\begin{proof}
	For each $R > 0 $ (which is assumed to be sufficiently large), we have the representation formula \begin{equation*}
	\begin{split}
	\Psi(x) = \frac{R^2 - |x|^2}{2\pi R} \int_{\partial B(0,R)} \frac{1}{|y-x|^2}  \Psi(y) dS(y)~.
	\end{split}
	\end{equation*} We will keep subtracting zeroes to gain more and more decay in $y$ in the above formula. 
	
	First, from $\Psi(0) = 0 $, we may subtract an appropriate multiple of $\Psi (0)$ to obtain \begin{equation*}
	\begin{split}
	\Psi (x)=  \frac{R^2 - |x|^2}{2\pi R} \int_{\partial B(0,R)} \left[ \frac{|y|^2 - |y-x|^2}{|y-x|^2 |y|^2}  \right] \Psi(y) dS(y)~.
	\end{split}
	\end{equation*} Note that the kernel now decays as $R^{-3}$ for a fixed $x$. 
	
	Next, from the vanishing condition \begin{equation*}
	\begin{split}
	\int_{\pr B(0,R)} \frac{y}{|y|^4} \Psi(y) dS(y) = 0~,
	\end{split}
	\end{equation*} we further rewrite \begin{equation*}
	\begin{split}
	\Psi (x) &= - \frac{R^2 - |x|^2}{2\pi R} \int_{\partial B(0,R)}  \left[     \frac{|x|^2 - 2x \cdot y}{|y-x|^2 |y|^2}  -  \frac{2x\cdot y}{|y|^4}  \right]\Psi(y) dS(y)~, \\
	&= - \frac{R^2 - |x|^2}{2\pi R} \int_{\partial B(0,R)}  \left[     \frac{|x|^2 |y|^2 +2|x|^2  x\cdot y - 4(x\cdot y)^2 }{|y-x|^2|y|^4} \right]\Psi(y) dS(y)~.
	\end{split}
	\end{equation*} 
	
	Finally, from the vanishing of second Fourier modes, \begin{equation*}
	\begin{split}
	\int_{\partial B(0,R)}  \frac{ 2y_1y_2 }{|y|^6} \Psi(y) dS(y) = \int_{\partial B(0,R)}  \frac{ y_1^2 - y_2^2 }{|y|^6} \Psi(y) dS(y) = 0~.
	\end{split}
	\end{equation*} Hence, we can subtract appropriate multiples of the above to modify the kernel \begin{equation*}
	\begin{split}
	\frac{|x|^2 |y|^2 +2|x|^2  x\cdot y - 4(x\cdot y)^2 }{|y-x|^2|y|^4} +\frac{2|x|^2}{|y|^4} + \frac{(y_1^2 - y_2^2)(x_1^2 - x_2^2)}{|y|^6} - \frac{2x_1x_2 y_1 y_2}{|y|^6}~.
	\end{split}
	\end{equation*} A direct computation shows that this expression decays as $R^{-5}$ in $|y|$ for a fixed $x$. Therefore, we bound \begin{equation*}
	\begin{split}
	|\Psi(x)| \le 10 C R \int_{\pr B(0,R)} \frac{|x|^3}{R^5} R^{3-\epsilon} dS(y) \le C'|x|^3 \frac{1}{R^\epsilon}~,
	\end{split}
	\end{equation*} since $|\Psi(y)| \le CR^{3-\epsilon} $ on $\pr B(0,R)$. Fixing $x$ and taking $R \rightarrow +\infty$ finishes the proof. 
\end{proof}

\begin{lemma}\label{lem:vel_linearbound}
	Assume that $\omega \in L^\infty(\mathbb{R}^2)$ and satisfies \begin{equation*}
	\begin{split}
	\int_{-\pi}^{\pi} \omega(Re^{i\theta}) \exp({\pm i\theta}) d\theta = 0 = \int_{-\pi}^{\pi} \omega(R\theta) \exp({\pm 2i\theta}) d\theta~.
	\end{split}
	\end{equation*} Then, the principal value \begin{equation}
	\begin{split}
	u(x) = \lim_{R \rightarrow \infty} \frac{1}{2\pi} \int_{|y| \le R} \frac{(x-y)^\perp}{|x-y|^2} \omega(y)dy
	\end{split}
	\end{equation} is pointwise well-defined, with a bound \begin{equation}
	\begin{split}
	|u(x)| \le C \V \omega \V_{L^\infty} |x| ~.
	\end{split}
	\end{equation}
\end{lemma}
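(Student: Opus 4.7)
The plan is to split the principal value integral at the radius $|y|=2|x|$, treating the near field by direct $L^\infty$ estimation and the far field by a Laurent expansion of the Biot--Savart kernel that exploits the vanishing of the first two Fourier modes of $\omega$ on each circle.

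For the near field $|y|\le 2|x|$, the singularity of $K(x-y)=(x-y)^\perp/(2\pi|x-y|^2)$ is weak enough to be harmless in two dimensions: the change of variable $z=y-x$ gives $\int_{|y|\le 2|x|}|x-y|^{-1}\,dy \le \int_{|z|\le 3|x|}|z|^{-1}\,dz = 6\pi|x|$, so this piece is absolutely convergent and bounded by $C\V\omega\V_{L^\infty}|x|$ independently of $R$.

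For the far field $2|x|<|y|\le R$ I would identify $\mathbb{R}^2\cong\mathbb{C}$ and observe that $K(x-y)=\frac{i}{2\pi(\bar{x}-\bar{y})}$. Since $|\bar{x}/\bar{y}|<1/2$ in this region, the geometric series converges and yields $K(x-y)=-\frac{i}{2\pi}\sum_{k\ge 0}\bar{x}^k/\bar{y}^{k+1}$. Passing to polar coordinates $y=re^{i\theta}$, the integral of the $k$-th term against $\omega$ becomes $-\frac{i\bar{x}^k}{2\pi}\int_{2|x|}^R r^{-k}\bigl(\int_{-\pi}^\pi e^{i(k+1)\theta}\omega(re^{i\theta})\,d\theta\bigr)\,dr$. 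For $k=0$ the inner integral is (a multiple of) the first Fourier mode of $\omega(r\cdot)$ and for $k=1$ it is the second Fourier mode, both of which vanish by hypothesis. The remaining terms $k\ge 2$ are pointwise dominated by $C|x|^2/|y|^3$ on $|y|\ge 2|x|$, which integrates over the far-field annulus to at most $C|x|$ uniformly in $R$. Running the same estimate on an annular shell $R_1<|y|<R_2$ gives an $O(|x|^2/R_1)$ bound, so the tails are Cauchy and the principal value is an actual convergent limit.

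Combining the two contributions proves $|u(x)|\le C\V\omega\V_{L^\infty}|x|$. I do not expect any serious obstacle here; the only real ingredient is choosing a kernel expansion whose terms are individually pure Fourier modes in $\theta$, after which the hypotheses kill precisely the two $k$ that would otherwise produce non-integrable contributions. It is worth noting that under just $2$-fold symmetry one would lose control of the $k=1$ term (since $2$-fold symmetry forces only odd Fourier modes of $\omega$ to vanish) and the argument would break down at exactly the borderline logarithmic term, in agreement with the Bahouri--Chemin counterexample described earlier.
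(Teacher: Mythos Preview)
Your argument is correct and follows essentially the same route as the paper: the same near/far split at $|y|=2|x|$, the same brute-force near-field bound, and the same far-field idea of expanding the kernel and killing the two leading terms via the Fourier-mode hypotheses. The only difference is packaging: you use the complex identification $K(x-y)=\frac{i}{2\pi(\bar x-\bar y)}$ and a geometric series to isolate the Fourier modes cleanly, whereas the paper carries out the equivalent subtractions $K(x-y)+y^\perp/|y|^2$ and then the second-order correction by hand in real coordinates. Your version is slightly slicker and makes the Cauchy-tail convergence transparent; the paper's version is more explicit about the real-variable structure but is doing the same computation.
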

\begin{proof}
	We consider two regions: when $y$ satisfies $|y| \le 2|x|$ and $|y| > 2|x|$. In the first region, a brute force bound gives \begin{equation}
	\begin{split}
	|u(x)| &\le C\V \omega \V_{L^\infty} \int_{|y| \le 2|x|} \frac{1}{|x-y|} dy \le  C |x| \V \omega\V_{L^\infty}~.
	\end{split}
	\end{equation}
	In the latter region, we proceed exactly as in the previous lemma; from the vanishing of first Fourier modes, we rewrite it as \begin{equation}
	\begin{split}
	p.v.~ \frac{1}{2\pi}\int_{|y| > 2|x|} \left[ \frac{(x-y)^\perp}{|x-y|^2} +\frac{y^\perp}{|y|^2}  \right]\omega(y)dy~,
	\end{split}
	\end{equation} and note that the first component equals \begin{equation*}
	\begin{split}
	 p.v.~\frac{1}{2\pi}  \int_{|y|>2|x|} \left[ \frac{y_2(x_1^2+x_2^2) - x_2(y_1^2 - y_2^2) + 2x_1 y_1y_2 }{|x-y|^2|y|^2} \right] \omega(y)dy~,
	\end{split}
	\end{equation*} so that the principal value makes sense once we have vanishing of the second Fourier modes. A similar conclusion holds for the second component of the velocity. Subtracting appropriate quantities and directly integrating in $y$ gives the desired bound $|u(x)| \le C \V \o \V_{L^\infty} |x|$. 
\end{proof}

\begin{lemma}
	Under the assumptions of Lemma \ref{lem:vel_linearbound}, we have a log-Lipschitz estimate of the form \begin{equation}
	\begin{split}
	|u(x) - u(x')| \le C \V\omega\V_{L^\infty} |x-x'| \ln\left( \frac{c\max(|x|,|x'|)}{|x-x'|} \right)~.
	\end{split}
	\end{equation}
\end{lemma}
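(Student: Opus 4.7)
The plan is to mimic the classical log-Lipschitz argument of the Yudovich theory, but to pay careful attention to the region $|y| \gtrsim \max(|x|,|x'|)$, where we cannot use $L^1$ control and must instead exploit the vanishing Fourier modes exactly as in Lemma \ref{lem:vel_linearbound}. Write $K(z) = z^\perp/(2\pi|z|^2)$, set $\delta := |x-x'|$ and $M := \max(|x|,|x'|)$, and assume without loss of generality $|x| = M$. If $\delta \ge M/2$ there is nothing to do: the previous lemma gives $|u(x)|+|u(x')| \le C\V\omega\V_{L^\infty} M \le C'\V\omega\V_{L^\infty}\delta$, which is already of the claimed form once we choose $c$ large enough so that $\ln(cM/\delta) \ge \ln(2c) \ge 1$. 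So assume $\delta < M/2$.

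For $R > 4M$, split
\[
\int_{|y|\le R}[K(x-y)-K(x'-y)]\omega(y)\,dy
\]
into three pieces: (Ia) the ball $B(x,2\delta)$; (Ib) the annulus $\{2\delta \le |x-y|\} \cap \{|y|\le 4M\}$; and (II) $\{4M < |y| \le R\}$. Region (Ia) is handled by the crude bound on each term separately, using $\int_{|z|\le r}|z|^{-1}dz = 2\pi r$: since $B(x,2\delta)\subset B(x',3\delta)$, both contributions are $O(\V\omega\V_{L^\infty}\delta)$. Region (Ib) uses the standard mean-value bound $|K(x-y)-K(x'-y)| \le C\delta/|x-y|^2$ valid whenever $|x-y|\ge 2\delta$; integrating in polar coordinates centered at $x$ over radii from $2\delta$ to $5M$ yields the principal logarithmic contribution $C\V\omega\V_{L^\infty}\delta\,\ln(5M/2\delta)$.

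The main obstacle is region (II), where the naive mean-value bound produces a divergent integral $\int_{4M}^{R} r^{-1}\,dr$, precisely because we lack the $L^1$ assumption on $\omega$. The remedy is to use the symmetry of $\omega$ to subtract higher-order Taylor terms, exactly as was done for the linear velocity bound. A direct computation in polar coordinates shows that each component of $\nabla K(-y)$ is a pure second Fourier mode in the angular variable of weight $r^{-2}$, i.e. a linear combination of $r^{-2}\cos(2\theta)$ and $r^{-2}\sin(2\theta)$. Hence the second-mode vanishing hypothesis yields
\[
\int_{4M<|y|\le R} \bigl[(x-x')\cdot\nabla K(-y)\bigr]\omega(y)\,dy = 0
\]
for every $R > 4M$. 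Subtracting this identity and Taylor-expanding $K(x-y)$ and $K(x'-y)$ around $-y$ to second order gives the pointwise estimate
\[
\bigl|K(x-y)-K(x'-y)-(x-x')\cdot\nabla K(-y)\bigr| \le \frac{CM\delta}{|y|^3},\qquad |y|\ge 4M,
\]
since $x\otimes x - x'\otimes x' = (x-x')\otimes x + x'\otimes(x-x')$ has entries of size $O(M\delta)$ and $|\nabla^2 K(-y)|\le C|y|^{-3}$. Integrating $M\delta/|y|^3$ over $\{|y|>4M\}$ yields a contribution of size $O(\V\omega\V_{L^\infty}\delta)$ uniformly in $R$, and so the limit $R\to\infty$ exists and is bounded by the same quantity.

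Summing the three contributions gives
\[
|u(x)-u(x')| \le C\V\omega\V_{L^\infty}\Bigl[\delta + \delta\ln(5M/2\delta) + \delta\Bigr] \le C'\V\omega\V_{L^\infty}\,\delta\,\ln\!\Bigl(\frac{cM}{\delta}\Bigr),
\]
which is the claim, since the lower-order $\delta$ terms are absorbed into the log using $\delta < M/2$. The whole argument can be summarized as: cubic decay of the symmetrized kernel at infinity (due to the cancellation of the first two Fourier modes) converts the potentially divergent far-field contribution into an $O(\delta)$ term, leaving the classical logarithmic loss of the mid-range annulus as the dominant contribution.
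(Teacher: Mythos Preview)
Your proof is correct and follows essentially the same three-region strategy as the paper (near ball, intermediate annulus, far field), with the same mechanism in each region. The only cosmetic difference is in the far-field: the paper says ``rewrite each of $K(x,y)$ and $K(x',y)$ as in the previous lemma and combine,'' whereas you phrase the same cancellation as a Taylor expansion of $K(x-y)-K(x'-y)$ around $-y$, observing that the first-order term $(x-x')\cdot\nabla K(-y)$ is a pure second angular Fourier mode and hence integrates to zero against $\omega$ on circles. This is exactly the content of the paper's subtraction step (the explicit second-mode terms subtracted in Lemma~\ref{lem:vel_linearbound} are precisely the entries of $\nabla K(-y)$), so the two arguments coincide; your Taylor formulation is arguably the cleaner way to say it.
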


\begin{proof}
	We start with the expression \begin{equation}
	\begin{split}
	u(x) - u(x') = p.v. \, \frac{1}{2\pi} \int_{\mathbb{R}^2} \left[ \frac{(x-y)^\perp}{|x-y|^2} - \frac{(x'-y)^\perp}{|x'-y|^2} \right] \omega(y)dy~,
	\end{split}
	\end{equation} and assume $|x| \ge |x'|$. We split the integration into 3 domains, $A= \{ y: |x-y| \le 2|x-x'| \}$, $C = \{ y: |y| > 3|x| \}$, and $B = \mathbb{R}^2 \backslash (A \cup C)$.
	
	In the region $A$, a brute force bound on the kernel gives a contribution \begin{equation}
	\begin{split}
	C \V\omega\V_{L^\infty} |x-x'|~,
	\end{split}
	\end{equation} and in the annulus-shape region $B$, due to the singular nature of the kernel $\nabla K$ we obtain a log-Lipschitz contribution \begin{equation}
	\begin{split}
	C \V \omega\V_{L^\infty} |x-x'| \ln\left( \frac{c|x|}{|x-x'|} \right)~.
	\end{split}
	\end{equation} Finally, in the region $C$, we re-write each of $K(x,y)$ and $K(x',y)$ as done in the previous lemma to gain decay in $y$, and combine the resulting expressions to obtain a bound of the form \begin{equation}
	\begin{split}
	C |x-x'| \frac{|x|}{|y|^3}
	\end{split}
	\end{equation} in the region $C$. Integrating in $y$ completes the proof.
\end{proof}

In particular, when $\omega$ is bounded and  $m$-fold rotationally symmetric for some $m \ge 3$, there exists a velocity vector field which grows at most linearly and log-Lipschitz continuous. From the convergence of the Biot-Savart law, it follows that $\n \cdot u = 0$ and $\n \times u = \o $ in the sense of distributions. 

Integrating this velocity vector field in space, one obtains the existence to the Poisson problem \eqref{eq:Poisson}, which actually satisfies the growth condition $|\Psi(x)| \le C|x|^2$. 

\begin{remark}
	The situation is different when we do not have vanishing of the second Fourier modes (equivalently, in the case of $2$-fold rotational symmetry). Explicitly, for \begin{equation*}
	\begin{split}
	\omega(y_1,y_2) = \alpha_1 \frac{y_1y_2}{|y|^2} + \alpha_2 \frac{y_1^2 - y_2^2}{|y|^2}~,
	\end{split}
	\end{equation*} (which is bounded) we have the following solutions to the Poisson problem \begin{equation}\label{eq:counterex_Poisson}
	\begin{split}
	\Psi(y_1,y_2) = \alpha_1 y_1y_2 \ln |y| + \beta_1 y_1y_2 + \alpha_2 (y_1^2 - y_2^2) \ln |y| + \beta_2 (y_1^2 - y_2^2)~. 
	\end{split}
	\end{equation} These examples were presented in \cite{E1}. 
\end{remark}

\begin{remark}
	We have seen that in the frozen-time case, the necessary condition is vanishing of the first two Fourier modes. Unfortunately, this information does not propagate under the Euler dynamics, in general. Therefore we really have to stick to the $m$-fold rotationally symmetric assumption, with some $m \ge 3$. For an explicit example, take the following vorticity configuration: \begin{equation*}
	\begin{split}
	\omega_0(r,\theta) = \left\{ \begin{array}{rl}
	1 &\mbox{ if  $\theta \in [-a,a] \cup [-a+\pi/2,a+\pi/2] ~,$ 
	}\\ 0 &\mbox{ otherwise .}
	\end{array} \right.
	\end{split}
	\end{equation*} This satisfies the vanishing assumptions, while $u_0 \cdot \n \o_0 $ does not. Note that this example was also presented in \cite{E1}.
\end{remark}

We have seen that the vanishing of the first two Fourier modes gives the decay $|x|^{-3}$ of the Biot-Savart kernel. Indeed, upon assuming further vanishing conditions, one continues to gain extra decay by subtracting appropriate quantities. Of course, it is more efficient to prove it by explicitly symmetrizing the kernel when we assume $m$-fold symmetry. More formally, one has:

\begin{lemma}[see \cite{E1}]\label{lem:decay}
	Let $K$ denote the Biot-Savart kernel \begin{equation*}
	\begin{split}
	K(x,y) := \frac{1}{2\pi} \frac{(x-y)^\perp}{|x-y|^2}~. 
	\end{split}
	\end{equation*} Then for each $m \ge 1$, the $m$-fold symmetrization in $y$ gives the decay $|y|^{-m}$: \begin{equation*}
	\begin{split}
	\frac{1}{m}\sum_{i=1}^m K(x, O^i_{2\pi/m} y ) = \frac{P_m(x,y)}{\prod_{i=1}^m |x -O^i_{2\pi/m} y|^2  }
	\end{split}
	\end{equation*} where $P_m(x,y)$ is a vector of homogeneous polynomials in the components of $x$ and $y$ of degree $2m-1$, which contains powers of $y_1$ and $y_2$ only up to the degree $m$. 
\end{lemma}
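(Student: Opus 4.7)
The plan is to pass to complex coordinates and reduce the symmetrization to a one-line partial-fractions computation. Identify $\mathbb{R}^2$ with $\mathbb{C}$ via $(x_1,x_2)\mapsto z := x_1+ix_2$ (and $y\mapsto w := y_1+iy_2$), under which rotation by $2\pi/m$ becomes multiplication by the primitive $m$-th root of unity $\zeta := e^{2\pi i/m}$. Since $x^\perp$ corresponds to $iz$ and $|x-y|^2 = (z-w)\overline{(z-w)}$, the Biot-Savart kernel takes the compact form
\[
K(x,y) = \frac{1}{2\pi}\cdot\frac{i}{\bar z - \bar w}.
\]
Consequently, the $m$-fold symmetrization in $y$ becomes
\[
\frac{1}{m}\sum_{i=1}^m K(x,O^i_{2\pi/m}y) = \frac{i}{2\pi m}\sum_{i=1}^m \frac{1}{\bar z - \zeta^{-i}\bar w}.
\]

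Next, I would recognize the right-hand side as $(2\pi m)^{-1}\, i\,\partial_{\bar z}\log\prod_{i=1}^m(\bar z - \zeta^{-i}\bar w)$. Because $\{\zeta^{-i}\}_{i=1}^m$ is the complete set of $m$-th roots of unity, the cyclotomic factorization gives $\prod_{i=1}^m(\bar z - \zeta^{-i}\bar w) = \bar z^m - \bar w^m$, so that
\[
\sum_{i=1}^m \frac{1}{\bar z - \zeta^{-i}\bar w} = \frac{m\,\bar z^{m-1}}{\bar z^m - \bar w^m},\qquad \frac{1}{m}\sum_{i=1}^m K(x,O^i_{2\pi/m}y) = \frac{i\,\bar z^{m-1}}{2\pi(\bar z^m - \bar w^m)}.
\]
To reach the denominator $\prod_{i=1}^m |x - O^i_{2\pi/m}y|^2$ appearing in the statement, I would multiply top and bottom by $\prod_{i=1}^m(z - \zeta^i w) = z^m - w^m$. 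Since $|x - O^i_{2\pi/m}y|^2 = (z - \zeta^i w)(\bar z - \zeta^{-i}\bar w)$, taking the product in $i$ yields exactly $(z^m - w^m)(\bar z^m - \bar w^m)$, so the denominator matches and the numerator becomes $\frac{i}{2\pi}\,\bar z^{m-1}(z^m - w^m)$.

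The last step is to read off the claimed polynomial structure in the real coordinates $(x_1,x_2,y_1,y_2)$. The complex expression $\bar z^{m-1}(z^m - w^m)$ is homogeneous of total degree $(m-1) + m = 2m - 1$, and its dependence on $w$ enters only through the single factor $w^m = (y_1+iy_2)^m$, whose binomial expansion produces only monomials in which $y_1$ and $y_2$ individually have degree at most $m$. Separating real and imaginary parts (which absorbs the factor of $i$) yields the $\mathbb{R}^2$-valued homogeneous polynomial $P_m(x,y)$ with precisely the degree properties asserted. The main conceptual content is really the cyclotomic identity together with the observation that the symmetrization is a logarithmic derivative; the rest is routine bookkeeping, and this is the closest thing to an obstacle.
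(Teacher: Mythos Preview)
Your argument is correct, and both your proof and the paper's recognize the symmetrized kernel as a logarithmic derivative of $\prod_i |x-O^i_{2\pi/m}y|^2$. The difference lies in how the polynomial structure of the numerator is extracted. The paper stays in real coordinates: it observes that $F_{y_0}(x):=\prod_i |x-O^i_{2\pi/m}y_0|^2$ is $m$-fold rotationally symmetric in $x$, hence its homogeneous components in $x$ can only have degrees $0$, $m$, $2m$; applying $\nabla^\perp_x$ then forces the $y$-degree of $P_m$ to be at most $m$. You instead pass to complex coordinates and use the cyclotomic factorization $\prod_i(\bar z-\zeta^{-i}\bar w)=\bar z^m-\bar w^m$ to obtain the explicit closed form $P_m \leftrightarrow \tfrac{i}{2\pi}\bar z^{m-1}(z^m-w^m)$, from which the degree claims are immediate. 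Your route is more computational and yields the exact numerator, while the paper's route is a pure symmetry argument that avoids any explicit calculation; each makes the other transparent, since the symmetry the paper invokes is precisely what the cyclotomic identity encodes.
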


\begin{proof}
	Fix some $y_0 \ne 0$. Then \begin{equation*}
	\begin{split}
	P_m(x,y_0) = \nabla^\perp_x \left( \prod_{i=1}^m |x - O^i_{2\pi/m} y_0|^2 \right) ~,
	\end{split}
	\end{equation*} where the function \begin{equation*}
	\begin{split}
	F_{y_0}(x) := \prod_{i=1}^m |x - O^i_{2\pi/m} y_0|^2
	\end{split}
	\end{equation*} is a $m$-fold symmetric polynomial in $x_1$ and $x_2$. Hence it only consists of terms which have degree 0, $m$, and $2m$ in $x_1,x_2$. The statement follows.
\end{proof} As a corollary, we have \begin{corollary}
For $m\ge 3$, define the kernel \begin{equation*}
\begin{split}
K^{(m)}(x,y) = \frac{1}{m}\sum_{i=1}^m K(x, O^i_{2\pi/m} y ).
\end{split}
\end{equation*} Then, for $|y| \ge 2|x|$, we have \begin{equation*}
\begin{split}
|K^{(m)}(x,y)| \le C_m \frac{|x|^{m-1}}{|y|^m}
\end{split}
\end{equation*} for some constant $C_m > 0$. Moreover, for points $y, y'$ satisfying $|y|, |y'| \ge 2|x|$ and $|y - y'| \ge |x|$, \begin{equation*}
\begin{split}
\left| K^{(m)}(x,y) - K^{(m)}(x,y') \right| \le C_m |y - y'| \frac{|x|^{m-1}}{|y|^{m+1}}. 
\end{split}
\end{equation*}
\end{corollary}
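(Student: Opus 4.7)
The first estimate follows directly from Lemma \ref{lem:decay}. Writing $K^{(m)}(x,y) = P_m(x,y)/D_m(x,y)$ with $D_m(x,y) := \prod_{i=1}^m |x - O^i_{2\pi/m}y|^2$, the assumption $|y|\ge 2|x|$ gives $|x-O^i_{2\pi/m}y| \ge |y|-|x| \ge |y|/2$ for each $i$, so $D_m(x,y) \ge (|y|/2)^{2m}$. For the numerator, the structural content of the lemma is that every monomial $x^\alpha y^\beta$ appearing in $P_m$ has $|\alpha|+|\beta|=2m-1$ and $|\beta| \le m$, hence $|\alpha| \ge m-1$. Factoring,
\[
|x|^{|\alpha|}|y|^{|\beta|} = |x|^{m-1}|y|^m \cdot \left(\frac{|x|}{|y|}\right)^{|\alpha|-(m-1)} \le |x|^{m-1}|y|^m,
\]
which gives $|P_m(x,y)| \le C|x|^{m-1}|y|^m$ and therefore $|K^{(m)}(x,y)| \le C_m |x|^{m-1}/|y|^m$.

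For the difference estimate, my plan is to establish the pointwise gradient bound
\[
|\nabla_y K^{(m)}(x,\xi)| \le C_m\,\frac{|x|^{m-1}}{|\xi|^{m+1}}, \qquad |\xi|\ge 2|x|,
\]
and then apply the mean value theorem on the segment $[y,y']$. The gradient bound is obtained from the quotient rule $\nabla_y K^{(m)} = \nabla_y P_m/D_m - P_m \nabla_y D_m / D_m^2$ by exactly the same monomial-degree bookkeeping as in the first paragraph: differentiation in $y$ lowers the $y$-degree of $P_m$ by one, so the monomials of $\nabla_y P_m$ satisfy $|\beta|\le m-1$ and $|\alpha|\ge m-1$, yielding $|\nabla_y P_m| \le C|x|^{m-1}|\xi|^{m-1}$; differentiating the product expression for $D_m$ gives $|\nabla_y D_m| \le C|\xi|^{2m-1}$. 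Dividing by $D_m$ or $D_m^2$ (bounded below by $c|\xi|^{2m}$ or $c|\xi|^{4m}$) produces the stated gradient bound for both terms.

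The main point requiring care is ensuring the segment $[y,y']$ stays in the good region $\{|\xi|\ge c|x|\}$, since the singular set of $K^{(m)}(x,\cdot)$ consists of the $m$ points $O^{-i}_{2\pi/m}x$ lying on the circle $|\xi|=|x|$. Under $|y|,|y'| \ge 2|x|$, provided $|y-y'|$ is small relative to $|y|$, the segment remains safely away from this singular set and the MVT together with the gradient bound yields the claim. In the complementary regime where $|y-y'|$ is comparable to or larger than $|y|$, one bypasses the MVT altogether by applying the first estimate to $K^{(m)}(x,y)$ and $K^{(m)}(x,y')$ separately; the resulting bound $|K^{(m)}(x,y)-K^{(m)}(x,y')| \le C|x|^{m-1}/\min(|y|,|y'|)^m$ is then recast in the desired form using $|y-y'| \gtrsim |y|$ to absorb the extra factor into the constant. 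The only nontrivial step is the monomial-degree accounting for the gradient of $K^{(m)}$; everything else is mechanical once this is in hand.
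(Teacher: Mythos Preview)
The paper leaves this corollary unproved, presenting it as an immediate consequence of Lemma~\ref{lem:decay}; your argument is precisely the natural elaboration a reader would supply, and the monomial-degree bookkeeping for $P_m$ and $\nabla_y P_m$ together with the MVT/separate-estimate dichotomy is correct.

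One small point worth flagging: in your large-$|y-y'|$ case, recasting $C|x|^{m-1}/\min(|y|,|y'|)^m$ into the target form $C|y-y'|\,|x|^{m-1}/|y|^{m+1}$ via $|y-y'|\gtrsim |y|$ tacitly requires $|y'|\gtrsim |y|$, which is not among the stated hypotheses. This is really an imprecision in the corollary as written (the denominator should carry $\min(|y|,|y'|)^{m+1}$, or equivalently one first reduces to $|y|\le |y'|$ using the symmetry of the left-hand side in $y,y'$). In the paper's intended application---the uniqueness argument of Lemma~\ref{lem:unique}---the two points are flow images of a common initial point and have comparable modulus by the estimates~\eqref{eq:comparable_ratio}, so the issue never arises there.
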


We now turn to the task of obtaining uniqueness of the Euler solution. We will reduce it to the following Osgood uniqueness condition for a suitable quantity.

\begin{lemma}\label{lem:osgood}
	If a continuous function $f:[0,\epsilon) \rightarrow \mathbb{R}_{\ge 0}$ satisfy $f(0) = 0$ and \begin{equation}
	f(t) \le C \int_0^t f(s) \ln(1+ \frac{1}{f(s)} ) ds
	\end{equation}	then $f \equiv 0$.
\end{lemma}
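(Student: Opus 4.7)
The plan is to reduce the integral hypothesis to a classical Osgood-type differential inequality, and then exploit the divergence of $\int_{0^+} du/[u\ln(1+1/u)]$. Set $\phi(r) := r\ln(1+\tfrac{1}{r})$ for $r > 0$ and $\phi(0) := 0$; this is continuous on $[0,\infty)$, and the derivative $\phi'(r) = \ln(1+1/r) - 1/(r+1)$ is positive (since $\ln(1+x) \ge x/(1+x)$ for $x > 0$), so $\phi$ is strictly increasing. Define
\begin{equation*}
F(t) := C\int_0^t \phi(f(s))\,ds,
\end{equation*}
which is $C^1$ on $[0,\epsilon)$ with $F(0) = 0$ and $F'(t) = C\phi(f(t))$. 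The hypothesis is exactly $f \le F$, and monotonicity of $\phi$ gives the key inequality
\begin{equation*}
F'(t) \le C\phi(F(t)) \qquad \text{on } [0,\epsilon).
\end{equation*}

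Next, I would argue by contradiction. Suppose $F \not\equiv 0$, so there exists $t_1 \in (0,\epsilon)$ with $F(t_1) > 0$. By continuity of $F$ and $F(0) = 0$, the quantity $t_0 := \sup\{t \in [0,t_1] : F(t) = 0\}$ satisfies $t_0 < t_1$, $F(t_0) = 0$, and $F(t) > 0$ on $(t_0, t_1]$. Dividing the differential inequality by $\phi(F) > 0$ on this subinterval and integrating from $t_0 + \delta$ to $t_1$ yields, after the substitution $u = F(s)$,
\begin{equation*}
\int_{F(t_0+\delta)}^{F(t_1)} \frac{du}{\phi(u)} = \int_{t_0+\delta}^{t_1} \frac{F'(s)}{\phi(F(s))}\,ds \le C(t_1 - t_0 - \delta) \le C\epsilon.
\end{equation*}

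Finally, I would let $\delta \to 0^+$. Since $F$ is continuous and $F(t_0) = 0$, we have $F(t_0+\delta) \to 0$. The Osgood integral
\begin{equation*}
\int_0^{F(t_1)} \frac{du}{u\ln(1+\tfrac{1}{u})}
\end{equation*}
diverges, because near $u = 0$ the integrand is asymptotic to $1/[u\ln(1/u)]$, whose antiderivative $-\ln(-\ln u)$ tends to $+\infty$ as $u \to 0^+$. This contradicts the uniform upper bound $C\epsilon$, forcing $F \equiv 0$ on $[0,\epsilon)$ and hence $f \equiv 0$. The only delicate point is verifying monotonicity of $\phi$ and the divergence of the Osgood integral; everything else is a direct application of separation of variables to a scalar first-order inequality.
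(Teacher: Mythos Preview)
Your argument is correct and is the standard Osgood uniqueness proof: reduce the integral inequality to $F' \le C\phi(F)$ via monotonicity of $\phi(r)=r\ln(1+1/r)$, separate variables, and use the divergence of $\int_{0^+} du/\phi(u)$ to force $F\equiv 0$. The paper does not actually supply a proof here---it simply refers the reader to Marchioro--Pulvirenti \cite[p.~68]{MP}, where exactly this type of argument appears---so you have filled in what the paper leaves as a citation.

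One cosmetic slip: the antiderivative $-\ln(-\ln u)$ tends to $-\infty$ (not $+\infty$) as $u\to 0^+$, but this is precisely what makes the definite integral $\int_\delta^{F(t_1)} du/\phi(u)$ diverge to $+\infty$ as $\delta\to 0^+$, so the conclusion is unaffected.
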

\begin{proof}
	See for example the book of Marchioro and Pulvirenti \cite[p. 68]{MP}.
\end{proof}

For simplicity, let us set $\rho(a) := a \ln(1+1/a)$ for $a \ge 0$. 

\begin{lemma}[Uniqueness]\label{lem:unique}
	Given $\omega_0 \in L^\infty(\mathbb{R}^2)$ with 4-fold rotational symmetry, there is at most one solution of the $2D$ Euler equation (in vorticity formulation) with $\omega \in L^\infty_t L^\infty_x$ with 4-fold rotational symmetry. 
\end{lemma}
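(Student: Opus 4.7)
The plan is to adapt the Marchioro--Pulvirenti proof of Yudovich uniqueness (\cite[Chap.~2]{MP}) to this symmetry setting, combining the log-Lipschitz and linear growth bounds established earlier in this section with the Osgood lemma just stated. The novelty is that $\omega\in L^\infty$ has no integrability at infinity, so the classical step that controls $u_1-u_2$ using $L^p$-integrability of $\omega$ must be replaced by the decay of the symmetrized Biot--Savart kernel from Lemma~\ref{lem:decay}, which is precisely what the 4-fold symmetry buys.

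First I would build Lagrangian flows $\Phi_i(t,\cdot)$ for each solution $\omega_i$; by the log-Lipschitz and linear growth bounds on $u_i=u(\omega_i)$, the Osgood ODE criterion produces unique 4-fold-symmetric, area-preserving flows that fix the origin and satisfy $e^{-Ct}|x|\le|\Phi_i(t,x)|\le e^{Ct}|x|$. Setting $\delta(t,x):=\Phi_1(t,x)-\Phi_2(t,x)$, the time derivative splits as $[u_1(t,\Phi_1)-u_1(t,\Phi_2)]+(u_1-u_2)(t,\Phi_2)$, and the first bracket is controlled directly by the log-Lipschitz estimate by $C|\delta|\ln\bigl(c\max(|\Phi_1|,|\Phi_2|)/|\delta|\bigr)$.

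The bulk of the work lies in bounding the second bracket. Using 4-fold symmetry of $\omega_1-\omega_2$ together with area preservation of $\Phi_i$, I would rewrite
\[(u_1-u_2)(t,x)=\int_{\mathbb{R}^2}\bigl[K^{(4)}(x,\Phi_1(t,z))-K^{(4)}(x,\Phi_2(t,z))\bigr]\,\omega_0(z)\,dz\]
and split the $z$-integration into a far region $\{|z|\ge 3|x|e^{Ct}\}$ and its complement. In the far region $|\Phi_i(t,z)|\ge 2|x|$, so the corollary to Lemma~\ref{lem:decay} gives the pointwise kernel-difference bound $C|x|^{3}|\delta(z)|/|z|^{5}$; since the exponent $m=4\ge 3$, this is integrable at infinity against the crude bound $|\delta(z)|\lesssim|z|$ and contributes at most $C\|\omega_0\|_{L^\infty}g(t)|x|$, where $g(t):=\sup_{x\ne 0}|\delta(t,x)|/|x|$. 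In the near region I would change back to the Eulerian variable and, following the Yudovich/Marchioro--Pulvirenti scheme, subdivide into the set where $|\delta(z)|\le\alpha$ (on which $K^{(4)}$ is effectively Lipschitz in $y$) and its complement (where the singularity of $K^{(4)}$ is absorbed using area preservation on a set of small measure), then optimize over $\alpha$, obtaining a contribution at most $C\|\omega_0\|_{L^\infty}|x|\,\rho(g(t))$ with $\rho(a)=a\ln(1+1/a)$.

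Dividing the pointwise bound on $|\dot\delta(t,x)|$ through by $|x|$, taking the supremum in $x$, and integrating in time yields $g(t)\le C\int_0^t\rho(g(s))\,ds$ on any bounded time interval; since $g(0)=0$, Lemma~\ref{lem:osgood} forces $g\equiv 0$, hence $\Phi_1=\Phi_2$, and therefore $\omega_1=\omega_2$. The hard part will be the near-field estimate for $u_1-u_2$: the standard Yudovich argument crucially invokes $\omega_1-\omega_2\in L^p$ for finite $p$, which the bare $L^\infty$ hypothesis does not provide, so I will need to substitute area preservation together with the near-field version of a Calder\'on--Zygmund cancellation to land exactly on the Osgood scale $\rho(a)=a\ln(1+1/a)$ without losing an extra logarithm or power of $g$.
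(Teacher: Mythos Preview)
Your strategy is correct and will close, but it is genuinely different from the paper's argument. The paper does \emph{not} work with the sup-norm quantity $g(t)=\sup_{x\ne 0}|\delta(t,x)|/|x|$; instead it introduces the weighted $L^1$-quantity
\[
f(t)=\int_{\mathbb{R}^2}\frac{|\Phi_t(x)-\tilde\Phi_t(x)|}{|x|}\,\frac{|\omega_0(x)|}{1+|x|^3}\,dx,
\]
integrates the identity for $\Phi_t-\tilde\Phi_t$ against the finite measure $|x|^{-1}|\omega_0(x)|(1+|x|^3)^{-1}dx$, and obtains the Osgood bound $f(t)\le C\int_0^t\rho(f(s))\,ds$ via Jensen's inequality, after splitting the double $x$--$y$ integral into four regions according to the relative sizes of $|x|$ and $|y|$ and symmetrizing the kernel in whichever variable is smaller. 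What the paper's route buys is that exactly the same estimate drives the existence proof (the iterates $\delta^n(t)$ satisfy the same inequality), and one only needs the flows on $\mathrm{supp}\,\omega_0$. What your sup-norm route buys is that no Jensen step is needed and the logic is entirely pointwise.

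Two places in your write-up are slightly off and worth tightening. First, in the near-field you propose to split on $\{|\delta(z)|\le\alpha\}$; as stated this is not quite right, since on that set $K^{(4)}$ is not ``effectively Lipschitz in $y$'' near the singularity. The clean split is on $\{|x_0-\Phi_2(z)|\ge 2\alpha\}$ versus its complement, with $\alpha=Cg(t)|x_0|$: the point is that in the near region $|z|\lesssim|x_0|$ one has the \emph{uniform} bound $|\delta(z)|\le Cg(t)|x_0|$, so no separate ``$|\delta|>\alpha$'' set survives, and the singular ball $\{|x_0-\Phi_2(z)|<2\alpha\}$ has area $\lesssim(g(t)|x_0|)^2$, which is what area preservation actually gives you. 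No Calder\'on--Zygmund cancellation is needed here---$\omega_0\in L^\infty$ on the bounded near region already behaves like $L^1$. Second, $g$ is a supremum of continuous functions and is a priori only lower semicontinuous; to invoke Lemma~\ref{lem:osgood} cleanly, pass to $F(t)=C\int_0^t\rho(g(s))\,ds$, which is absolutely continuous with $F(0)=0$ and $F'\le C\rho(F)$, and conclude $F\equiv0$.
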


The assumption that $\omega$ is 4-fold symmetric is just for concreteness and simplicity of notation; a similar argument goes through for any $m \ge 3$. 

\begin{proof}
	Assume there exist two solution triples, $(\omega,u,\Phi)$ and $(\tilde{\omega},\tilde{u},\tilde{\Phi})$. A simple observation is that we have a linear bound \begin{equation}
	\begin{split}
	|\Phi(t,x)| \le |x| + \int_0^t |u(s,\Phi(s,x))|ds \le |x| + C\V\omega_0\V_{L^\infty} \int_0^t |\Phi(s,x)|ds~. 
	\end{split}
	\end{equation} This implies that we have bounds \begin{equation*}
	\begin{split}
	|x|\exp(-C\V\omega_0\V_{L^\infty}t) \le |\Phi(t,x)| \le |x|\exp(C\V\omega_0\V_{L^\infty}t),
	\end{split}
	\end{equation*} and the same bounds hold for $|\tilde{\Phi}(t,x)|$. Similarly, \begin{equation*}
	\begin{split}
	|\Phi(t,x) - x| \le \int_0^t |u(s,\Phi(s,x))| ds \le C\V  \omega_0\V_{L^\infty}\int_0^t |\Phi(s,x)| ds ,
	\end{split}
	\end{equation*} so that for any $\epsilon > 0$ ($\epsilon = 1/10$ will suffice for our purposes), there exists $T = T(\epsilon,\V \omega_0\V_{L^\infty})$ such that for $0 \le t \le T$, we have bounds \begin{equation}\label{eq:comparable_ratio}
	\begin{split}
	1 - \epsilon \le \frac{|\Phi(t,x)|}{|x|}\, , \, \frac{|\tilde{\Phi}(t,x)|}{|x|} \le 1 + \epsilon
	\end{split}
	\end{equation} as well as \begin{equation}
	\begin{split}
	\frac{|\Phi(t,x) - x|}{|x|}\,,\, \frac{|\tilde\Phi(t,x) - x|}{|x|} \le \epsilon 
	\end{split}
	\end{equation} and hence \begin{equation}
	\begin{split}
	\frac{|\Phi_t(x)-\tilde{\Phi}_t(x)|}{|x|} \le 2\epsilon
	\end{split}
	\end{equation} for all $0\le t \le T = T(\epsilon, \V \omega_0\V_{L^\infty} )$, uniformly in $x \in \mathbb{R}^2 $. From now on, we restrict ourselves to the time interval $[0,T]$, with $\epsilon = 1/10$.

	We want to close an estimate of the form in Lemma \ref{lem:osgood} in terms of the quantity \begin{equation}
	\begin{split}
	f(t):= \int_{\mathbb{R}^2} \frac{|\Phi_t(x)-\tilde{\Phi}_t(x)|}{|x|} \cdot \frac{|\omega_0(x)|}{1 + |x|^3}dx~.
	\end{split}
	\end{equation}
	Note that ${|\omega_0(x)|}/(1 + |x|^3) dx$ is a finite measure, and its measure is bounded by a constant multiple of $\V \omega_0\V_{L^\infty}$. In particular, $f$ is finite for all time. We then write \begin{equation}
	\begin{split}
	\Phi_t(x) - \tilde{\Phi}_t(x) =& \int_0^t \int_{\mathbb{R}^2} \left[ K(\Phi_s(x)-\Phi_s(y)) - K(\Phi_s(x) - \tilde{\Phi}_s(y)) \right] \omega_0(y)dy ds \\
	&\quad +\int_0^t \tilde{u}_s(\Phi_s(x)) - \tilde{u}_s(\tilde{\Phi}_s(x)) ds~,
	\end{split}
	\end{equation} and integrate against  \begin{equation*}
	\begin{split}
	\frac{1}{|x|}\frac{|\omega_0(x)|}{1+|x|^3}dx~.
	\end{split}
	\end{equation*} From the log-Lipschitz bound on $\tilde{u}$, and using \eqref{eq:comparable_ratio}, we bound the second term in terms of \begin{equation*}
	\begin{split}
	C\V \omega_0 \V_{L^\infty}\int_0^t \int_{\mathbb{R}^2} \frac{|\Phi_s(x) -\tilde{\Phi}_s(x)|}{|x|} \ln\left( \frac{c|x|}{|\Phi_s(x) -\tilde{\Phi}_s(x)|} \right) \frac{|\omega_0(x)|}{1+|x|^3} dx ds \le C\int_0^t    \rho (f(s)) ds~,
	\end{split}
	\end{equation*} where we have used the Jensen's inequality with respect to the finite measure $\frac{|\omega_0(x)|}{1+|x|^3}dx$.\footnote{We note that, while the multiplicative constant $C > 0$ arising from the Jensen's inequality depends on the total mass of the measure, the constant is uniformly bounded in terms of $\V \omega_0\V_{L^\infty}$.} This is precisely the bound we want. 
	
	Turning to the first term, we have \begin{equation}
	\begin{split}
	\int_0^t ds \int_{y} \left[ \int_x  \left[ K(\Phi_s(x)-\Phi_s(y)) - K(\Phi_s(x) - \tilde{\Phi}_s(y)) \right] \frac{1}{|x|} \frac{|\omega_0(x)|}{1 +|x|^3} dx \right] \omega_0(y)  dy~.
	\end{split}
	\end{equation} For each fixed $y$, once we obtain an estimate of the form \begin{equation}\label{eq:want}
	\begin{split}
	&\left| \int_x  \left[ K(\Phi_s(x)-\Phi_s(y)) - K(\Phi_s(x) - \tilde{\Phi}_s(y)) \right]  \frac{1}{|x|} \frac{|\omega_0(x)|}{1 +|x|^3} dx \,\, \omega_0(y) \right| \\ &\qquad \le C \frac{|\Phi(t,y)-\tilde{\Phi}(t,y)|}{|y|} \ln\left( \frac{c|y|}{|\Phi(t,y)-\tilde{\Phi}(t,y)|} \right) \frac{|\omega_0(y)|}{1+ |y|^3}~,
	\end{split}
	\end{equation} we obain a bound $C \int_0^t \rho(f(s))ds$, and this completes the proof. 
	
	We consider 4 regions. First, $A = \{  x: |x| \le |y|/10  \}$, $D = \{ x: |x| > 10|y| \}$. Then set $B = \{  x: |x-y| \le 3 |\Phi(t,y)-\tilde{\Phi}(t,y)|  \}$ and $C = \mathbb{R}^2 \backslash (A \cup B \cup D)$. Regions $A, B, D$ do not overlap and $C$ is an annulus-shape domain. 
	
	\medskip
	
	(i) Region $A$
	
	\smallskip
	
	We symmetrize each of the kernels in $y$. Then, combining two fractions together, we may pull out a factor of $|\Phi(t,y) - \tilde{\Phi}(t,y)|$. Then note that each factor in the denominator is bounded below by a constant multiple of $|x-y|$, which is in turn bounded below by a multiple of $|y|$ in this region. Therefore, we obtain a bound of the form (see Lemma \ref{lem:decay}) \begin{equation}
	\begin{split}
	\int_A \left[	C|\Phi(t,y)-\tilde{\Phi}(t,y)| \frac{|x|^{14}+|x|^3 |y|^{11}}{|y|^{16}} \right] \frac{1}{|x|} \frac{|\omega_0(x)|}{1 +|x|^3} dx \,\, |\omega_0(y)|~.
	\end{split}
	\end{equation} This is integrable in $x$, and integrating gives a bound \begin{equation}
	\begin{split}
	C \frac{|\Phi(t,y)-\tilde{\Phi}(t,y)|}{|y|}  \frac{|\omega_0(y)|}{1 + |y|^3}~.
	\end{split}
	\end{equation}
	
	\medskip
	
	(ii) Region $D$
	
	\smallskip

	This time, we symmetrize the kernel in $x$. Proceeding similarly as in the region $A$, we obtain a bound \begin{equation}
	\begin{split}
	\int_D |\Phi(t,y)-\tilde{\Phi}(t,y)| \frac{|y|^{14} + |y|^2 |x|^{12} }{|x|^{16}} \frac{1}{|x|} \frac{|\omega_0(x)|}{1 +|x|^3} dx \,\, |\omega_0(y)|~.
	\end{split}
	\end{equation}
	This is integrable in $x$, and clearly we have \begin{equation*}
	\begin{split}
	\int_D \frac{|y|^{14} + |y|^2 |x|^{12} }{|x|^{16}} \frac{|y|}{|x|} \frac{1 + |y|^3}{1 +|x|^3} dx \le C~.
	\end{split}
	\end{equation*} This results in the same bound as in the region $A$.
	
	\medskip
	
	(iii) Regions $B$ and $C$
	
	\smallskip
	
	In this case, we have $|x| \approx |y|$, so we can freely interchange the powers of $|x|$ with $|y|$ in the denominator. We then estimate the kernel exactly as in the proof of the log-Lipschitz bound of $u(x)$. 
	
	\medskip
	
	Collecting the bounds, we obtain \eqref{eq:want}. Hence, we have obtained \begin{equation*}
	\begin{split}
	f(t) \le C \int_0^t \rho(f(s))ds~,
	\end{split}
	\end{equation*} on some time interval $t \in [0,T]$ with $T = T(\V \omega_0 \V_{L^\infty})$ and $C = C(m,\V\omega_0\V_{L^\infty})$. Lemma \ref{lem:osgood} guarantees that $f \equiv 0 $ on $[0,T]$, and since $\V \omega_0 \V_{L^\infty} = \V \omega_T \V_{L^\infty}$, this argument can be extended to any finite time.
	
	Hence, we have shown that the particle trajectories $\Phi(t,x)$ and $\tilde{\Phi}(t,x)$ coincide on the support of $\omega_0$, for all time. This trivially implies that $\omega \equiv \tilde{\omega}$ (and therefore, $\Phi(t,x) \equiv \tilde{\Phi}(t,x)$ everywhere). The proof is now complete. 
\end{proof}

\begin{proof}[Proof of Theorem \ref{thm:main}]
	It only remains to establish the existence. A particularly nice feature of the proof in \cite{MP}, which we have adopted here, is that the existence is shown in a completely parallel manner as the uniqueness. We only recall the main steps. 
	
	\medskip
	
	(i) Construction of the approximate sequence
	
	\smallskip
	
	Starting with the initial value $\omega^{(0)}(t,x) := \omega_0(x)$, we inductively define \begin{equation}\label{eq:approximate_seq}
	\begin{split}
	u^{(n)}_t(x) &:= p.v.\, (K * \omega_t^{(n-1)})(x)~,\\
	\frac{d}{dt} \Phi^{(n)}_t (x) &:= \left( u^{(n)}_t \circ \Phi^{(n)}_t \right) (x)~,\\
	\omega^{(n)}_t(x) &:= \left( \omega_0 \circ (\Phi^{(n)}_t)^{-1} \right) (x)~. 
	\end{split}
	\end{equation} Our previous lemmas guarantee that each of these definitions makes sense. It is important that the symmetry property remains valid at each step of the iteration. 
	
	\medskip
	
	(ii) Convergence of the sequence
	
	\smallskip
	
	Define for $n \ge 1$, \begin{equation*}
	\begin{split}
	\delta^n(t) := \int_{\mathbb{R}^2} \frac{|\Phi^{(n)}_t(x)-\Phi^{(n-1)}_t(x)|}{|x|} \cdot \frac{|\omega_0(x)|}{1 + |x|^3}dx~.
	\end{split}
	\end{equation*} The argument from the proof of Lemma \ref{lem:unique} applied instead to the pair $(\Phi^{(n)}, \Phi^{(n-1)})$ shows that this quantity is finite and satisfies the inequality \begin{equation*}
	\begin{split}
	\delta^n (t) \le C \int_0^t \rho (\delta^n(s)) ds + C \int_0^t \rho (\delta^{n-1}(s))ds~. 
	\end{split}
	\end{equation*} Upon introducing \begin{equation*}
	\begin{split}
	\bar{\delta}^N(t) := \sup_{n>N} \delta^n(t)~, 
	\end{split}
	\end{equation*} we obtain \begin{equation*}
	\begin{split}
	\bar{\delta}^N(t) \le C \int_0^t	\rho(\bar{\delta}^{N-1}(s)) ds~. 
	\end{split}
	\end{equation*} This is sufficient to show that \begin{equation*}
	\begin{split}
	\lim_{N \rightarrow \infty} \bar{\delta}^N(t) \rightarrow 0~,
	\end{split}
	\end{equation*} uniformly in some short time interval $[0,T]$. This length of the time interval depends only on $\V \o_0 \V_{L^\infty}$ and $m$, so that this argument extends to any finite time. 
	
	\medskip
	
	(iii) Properties of the limit
	
	\smallskip
	
	We have thus shown that, for each fixed time $t \in \mathbb{R}_{+}$, there exists a map $\Phi_t$ defined on the support of $\omega_0$, satisfying \begin{equation*}
	\begin{split}
	\int_{\mathbb{R}^2} \frac{|\Phi_t^{(n)}(x)- \Phi_t(x)|}{|x|} \cdot \frac{|\omega_0(x)|}{1 + |x|^3} dx \longrightarrow 0~, \qquad n \rightarrow \infty~.
	\end{split}
	\end{equation*} Then set \begin{equation*}
	\begin{split}
	\omega_t(x) = \left\{ \begin{array}{rl}
	 \omega_0(\Phi_t^{-1}(x)) &\mbox{ if  $ x= \Phi_t(y)$ for some $ y \in \mbox{supp}(\omega_0) $}~, \\
	0 &\mbox{ otherwise .} \end{array} \right.
	\end{split}
	\end{equation*} This vorticity is $m$-fold rotationally symmetric and bounded. Therefore, we may define $u_t$ and then $\Phi_t$ on the entire plane. It is direct to show that this map is a measure preserving homeomorphism for each time moment. The triple $(\omega_t, u_t, \Phi_t)$ solves the $2D$ Euler equation and satisfies our assumptions. 

	\smallskip
	
	This finishes the proof. \end{proof}

\begin{example}
	In this example, we collect several situations where Theorem \ref{thm:main} applies. 
	\begin{enumerate}
		\item (Torus) Consider the 2D torus $\mathbb{T}^2 = [-\pi,\pi)^2$ and assume that the initial vorticity $\omega_0 \in L^\infty(\mathbb{T}^2)$ satisfies the symmetry $\omega_0(x_1,x_2) = \omega_0(-x_2,x_1) = \omega_0(-x_1,-x_2) = \omega_0(x_2,-x_1)$. Then, we may identify such an initial data with a vorticity defined on $\mathbb{R}^2$ which has 4-fold rotational symmetry around the point $(0,0)$. 
		\item (Square and equilateral triangle) Consider the $2D$ Euler equation on the unit square $\square = [0,1]^2$ with slip boundary conditions, i.e., $u \cdot n = 0$ on the boundary where $n$ is the unit normal vector. Assume that we are given $\omega_0 \in L^\infty(\square)$ which is odd across the diagonal, that is, $\omega_0(x_1,x_2) = -\omega_0(x_2,x_1)$ for $x_1,x_2 \in [0,1]$. One may then extend it as an odd function with respect to all the sides of the square to obtain a 4-fold symmetric initial vorticity on $\mathbb{R}^2$.
		
		A similar procedure can be done for the case of an equilateral triangle. Here, we obtain a 6-fold symmetric vorticity on the plane. 
		
		It is interesting to note that in these cases, assuming that the mean of vorticity is zero on the periodic domain, the velocity is actually bounded on $\mathbb{R}^2$ and therefore our solutions coincide with Serfati's.
		\item (Rational sectors) This time, consider the $2D$ Euler equation on the sector $S_{\pi/m} =  \{ (r,\theta) : 0 \le r < \infty, 0 \le \theta \le \pi/m \}$ for $m \ge 3$ with slip boundary conditions. Given $\omega_0 \in L^\infty(S_{\pi/m})$, we extend it as an odd function onto the whole plane across the boundaries; i.e. $\omega_0(r,\theta) = -\omega_0(r, \frac{2\pi}{m} - \theta )$ for all $r, \theta$. Then, we obtain an $m$-fold symmetric vorticity.
	\end{enumerate}
\end{example}

\begin{remark}
	In all of the above examples, it is not hard to show that if we have additional regularity of the initial data, e.g. $\omega_0 \in C^{k,\alpha}(D)$ for $D \in \{ \mathbb{T}^2, \square, S_{\pi/m} \}$, then this regularity propagates by the Euler equation, even though the odd extension onto $\mathbb{R}^2$ could be discontinuous across the symmetry axis. 
\end{remark}

It is well known that if initially $\omega_0 \in C^{0,1} \cap L^1 (\mathbb{R}^2)$, then the maximum of the gradient can grow at most double exponentially in time. This double exponential bound can be excluded, at least at the origin, in all of the above situations; indeed, this is a direct consequence of the estimate \begin{equation*}
\begin{split}
|u(x)| \le C\V\omega_0\V_{L^\infty} |x|
\end{split}
\end{equation*} which says that no fluid particle can approach the origin faster than the exponential rate. This recovers some of the very recent results of Itoh, Miura, and Yoneda \cite{IMY1,IMY2}. 

\begin{corollary}[see \cite{IMY1,IMY2}]
	Assume that we are in one of the above domains and the initial vorticity $\omega_0$ satisfies the required symmetry assumptions. If in addition $\omega_0$ is Lipschitz, then we have the following exponential bound on the gradient at the origin for all time: \begin{equation*}
	\begin{split}
	\sup_{x\ne 0}\frac{|\omega(t,x)-\omega(t,0)|}{|x|} \le \V\nabla \omega_0\V_{L^\infty} \exp(c\V \omega_0\V_{L^\infty}t)~.
	\end{split}
	\end{equation*}
\end{corollary}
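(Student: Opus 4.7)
The plan is to exploit three ingredients that have already been established in the excerpt: (i) the vorticity is transported along the flow, so $\omega(t,x)=\omega_0(\Phi_t^{-1}(x))$; (ii) by the imposed $m$-fold symmetry ($m\ge 3$) the origin is a fixed point of the flow, so $\Phi_t(0)=0$ and in particular $\omega(t,0)=\omega_0(0)$; and (iii) the linear bound $|u(t,x)|\le C\Vert\omega_0\Vert_{L^\infty}|x|$ from Lemma \ref{lem:vel_linearbound} (conservation of $\Vert\omega\Vert_{L^\infty}$ is used here), which, when fed into the flow ODE, will give Gr\"onwall-type two-sided exponential bounds on $|\Phi_t(y)|$.

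First, I would rewrite
\begin{equation*}
|\omega(t,x)-\omega(t,0)| = |\omega_0(\Phi_t^{-1}(x))-\omega_0(0)| \le \Vert \nabla \omega_0\Vert_{L^\infty}\,|\Phi_t^{-1}(x)|,
\end{equation*}
where I used that $\Phi_t^{-1}(0)=0$ (inverse of the fixed-point statement for $\Phi_t$). The whole problem is thereby reduced to a lower bound for $|\Phi_t|$ in the form $|\Phi_t(y)|\ge |y|\exp(-c\Vert\omega_0\Vert_{L^\infty}t)$, because then setting $y=\Phi_t^{-1}(x)$ yields $|\Phi_t^{-1}(x)|\le |x|\exp(c\Vert\omega_0\Vert_{L^\infty}t)$, which plugged into the previous display gives exactly the corollary.

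For the lower bound on $|\Phi_t(y)|$, I apply the flow ODE \eqref{eq:flow_ODE} and the estimate $|u(s,z)|\le C\Vert\omega(s)\Vert_{L^\infty}|z| = C\Vert\omega_0\Vert_{L^\infty}|z|$ (using that the $L^\infty$ norm of vorticity is preserved). This gives
\begin{equation*}
\Bigl|\tfrac{d}{dt}|\Phi_t(y)|\Bigr|\le |u(t,\Phi_t(y))|\le C\Vert\omega_0\Vert_{L^\infty}\,|\Phi_t(y)|,
\end{equation*}
so Gr\"onwall in both directions yields $|y|e^{-C\Vert\omega_0\Vert_{L^\infty}t}\le |\Phi_t(y)| \le |y|e^{C\Vert\omega_0\Vert_{L^\infty}t}$. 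The lower inequality is what we need; putting it into the reduction above closes the proof with $c=C$.

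There is no real obstacle: the only subtlety is justifying that $\Phi_t(0)=0$, but this follows from the symmetry (the origin is invariant under $O_{2\pi/m}$, and the velocity vanishes there by the bound $|u(x)|\le C\Vert\omega_0\Vert_{L^\infty}|x|$), and from the uniqueness of the flow ODE with Osgood-regular velocity field established in Lemma \ref{lem:unique}. In the cases of the square, torus, equilateral triangle, or rational sector, one simply needs to verify that after the odd extension the resulting extended vorticity is still $m$-fold symmetric with $m\ge 3$, so that the linear bound on $u$ at the origin (now a vertex or center) remains valid; this was already arranged in the preceding example. No estimate on higher regularity of $\Phi_t$ is needed — only the pointwise radial Gr\"onwall bound — which is precisely what makes the argument work despite $\nabla u$ being merely log-Lipschitz.
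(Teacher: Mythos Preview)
Your proposal is correct and follows exactly the approach the paper indicates: the paper states (just before the corollary) that the result is ``a direct consequence of the estimate $|u(x)| \le C\Vert\omega_0\Vert_{L^\infty} |x|$ which says that no fluid particle can approach the origin faster than the exponential rate,'' and the two-sided Gr\"onwall bound $|x|e^{-C\Vert\omega_0\Vert_{L^\infty}t}\le |\Phi_t(x)| \le |x|e^{C\Vert\omega_0\Vert_{L^\infty}t}$ you use is already written out verbatim in the proof of Lemma~\ref{lem:unique}. Your write-up simply makes explicit the transport-formula step $\omega(t,x)-\omega(t,0)=\omega_0(\Phi_t^{-1}(x))-\omega_0(0)$ that the paper leaves implicit.
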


\subsection{Propagation of the angular regularity}

In this subsection, we show well-posedness of the $2D$ Euler equation in certain scaling invariant spaces, which encodes regularity in the angle direction. Let us use the notation \begin{equation}\label{eq:newalpha}
\begin{split}
\V \omega \V_{\mathring{C}^{\alpha}(\mathbb{R}^2)} &:= \V\omega\V_{L^\infty(\mathbb{R}^2)} +  \V |x|^\alpha \omega \V_{C_*^{\alpha}(\mathbb{R}^2)} \\
& := \sup_{x} |\omega(x)| + \sup_{x \ne x'} \frac{\left||x|^\alpha \omega(x) - |x'|^\alpha \omega(x')\right| }{|x-x'|^\alpha}
\end{split}
\end{equation} for $0 < \alpha <1$, and the endpoint case is simply defined by \begin{equation}\label{eq:endpoint}
\begin{split}
\V \omega \V_{\mathring{C}^{0,1}(\mathbb{R}^2)} := \V\omega\V_{L^\infty(\mathbb{R}^2)} + \sup_{x \in \mathbb{R}^2 \backslash \{0\}} \left(|x| |\nabla \omega(x)| \right)~. 
\end{split}
\end{equation} Higher order norms $\mathring{C}^{k,\alpha}$ can be defined for $k \ge 1$: first when $0<\alpha<1$, \begin{equation}\label{eq:newalpha2}
\begin{split}
\V \omega \V_{\mathring{C}^{k,\alpha}(\mathbb{R}^2)} := \V\omega\V_{\mathring{C}^{k-1,1}(\mathbb{R}^2)} +  \V |x|^{k+\alpha} \nabla^k\omega \V_{{C}^{\alpha}(\mathbb{R}^2)} 
\end{split}
\end{equation} and \begin{equation}\label{eq:endpoint2}
\begin{split}
\V \omega \V_{\mathring{C}^{k,1}(\mathbb{R}^2)} := \V\omega\V_{L^\infty(\mathbb{R}^2)} + \sup_{x \in \mathbb{R}^2 \backslash \{0\}} \left(|x|^{k+1} |\nabla^{k+1} \omega(x)| \right)~. 
\end{split}
\end{equation} Here $\nabla^d \omega(x)$ is just a vector which consists of all expressions of the form $\partial_{{i_1}} \cdots \partial_{i_d} \omega(x)$ for $(i_1,\cdots,i_d) \in \{ 1,2 \}^d$. 
It is clear that these spaces deal with angular regularity. Indeed, in the ideal case when $\omega$ depends only on the angle, i.e. $\omega(r,\theta) = h(\theta)$ for some profile $h$ defined on the unit circle, \begin{equation*}
\begin{split}
\V \omega\V_{\mathring{C}^{k,\alpha}(\mathbb{R}^2)} \approx \V h\V_{C^{k,\alpha}(S^1)}~.
\end{split}
\end{equation*}

Note that if we have initial data $\omega_0 \in \mathring{C}^{k,\alpha}(\mathbb{R}^2) $ then $\omega_0$ is actually $C^{k,\alpha}$ away from the origin. This information propagates in time; the solution constructed in the previous subsection remains $C^{k,\alpha}$ away from the origin for all time. Of course it needs to be proved that $\omega(t,\cdot)$ actually belongs to $\mathring{C}^{k,\alpha}(\mathbb{R}^2)$. Not surprisingly, the bound turns out to be double exponential in time. 

\begin{theorem}\label{thm:smooth_away_from_0}
	Assume that $\omega_0\in\mathring{C}^{k,\alpha} $ is $(m+k)$-fold rotationally symmetric with some $m \ge 3$, with $k \ge 0$ and $0<\alpha \le 1$. Then, the unique solution $\omega(t,x) \in L^\infty([0,\infty);L^\infty(\mathbb{R}^2)  ) $ belongs to $L^\infty_{loc}\mathring{C}^{k,\alpha}$ with a bound \begin{equation}\label{eq:double_exp_gradient_bound}
	\begin{split}
	\V \omega(t) \V_{\mathring{C}^{k,\alpha}} \le C\exp(c_1 \exp(c_2 t) )
	\end{split}
	\end{equation} with constants depending only on $k, \alpha$, and $ \V \omega_0 \V_{\mathring{C}^{k,\alpha}}$.
\end{theorem}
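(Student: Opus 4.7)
The plan is to run the standard flow-map propagation of Hölder norms, adapted to the weighted critical setting $\mathring{C}^{k,\alpha}$, with the crucial input that under $m$-fold symmetry the velocity is already Lipschitz near the origin: $|u(x)|\le C|x|$ from Lemma~\ref{lem:vel_linearbound} pins down $\Phi_t(0)=0$ and keeps $|\Phi_t(x)|$ comparable to $|x|$. Since vorticity is transported, $\omega(t,x)=\omega_0(\Phi_t^{-1}(x))$, and the task reduces to controlling the weighted seminorm of a composition.

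I would first treat the base case $k=0$, $0<\alpha<1$. Using the equivalent characterization of Lemma~\ref{lem:equiv}, I bound $\min(|x|,|y|)^\alpha|\omega(t,x)-\omega(t,y)|/|x-y|^\alpha$ by applying the $\mathring{C}^{0,\alpha}$-bound of $\omega_0$ at the pre-images $X=\Phi_t^{-1}(x)$, $Y=\Phi_t^{-1}(y)$. The resulting ratio $\min(|x|,|y|)^\alpha/\min(|X|,|Y|)^\alpha$ is bounded by $e^{C\alpha t}$ thanks to the flow comparison $e^{-Ct}|x|\le|\Phi_t(x)|\le e^{Ct}|x|$ implied by $|u|\le C|x|$, while $|X-Y|^\alpha\le\V\n\Phi_t^{-1}\V_{L^\infty}^\alpha|x-y|^\alpha$. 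Combining,
\begin{equation*}
\V\omega(t)\V_{\mathring{C}^{0,\alpha}}\le C\V\omega_0\V_{\mathring{C}^{0,\alpha}}\exp\!\left(C\alpha\int_0^t\V\n u(s)\V_{L^\infty}\,ds\right).
\end{equation*}
To produce a double-exponential bound, the needed ingredient is a logarithmic Calderón–Zygmund estimate
\begin{equation*}
\V\n u\V_{L^\infty}\le C\V\omega\V_{L^\infty}\Bigl(1+\log\bigl(e+\V\omega\V_{\mathring{C}^{0,\alpha}}/\V\omega\V_{L^\infty}\bigr)\Bigr),
\end{equation*}
which is the $\mathring{C}^{0,\alpha}$-analog of the classical endpoint bound. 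I would derive it by the same three-region decomposition used in the proof of Theorem~\ref{MainThm2} (singularity, intermediate annulus, far-field), optimizing the cutoff radius in the usual way and using the symmetry-induced kernel decay $|y|^{-m}$ from Lemma~\ref{lem:decay} to kill the far-field integral. Since $\V\omega(t)\V_{L^\infty}=\V\omega_0\V_{L^\infty}$, feeding this into the previous display and setting $g(t):=\log(e+\V\omega(t)\V_{\mathring{C}^{0,\alpha}})$ yields a linear Grönwall $g(t)\le C+C\int_0^t g(s)\,ds$, hence $g(t)\lesssim e^{Ct}$ and the double exponential bound, while simultaneously $\V\n u(t)\V_{L^\infty}\lesssim e^{Ct}$.

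The endpoint $\alpha=1$ is handled pointwise from the chain rule: $|x|\,|\n\omega(t,x)|\le (|x|/|\Phi_t^{-1}(x)|)\,\V\n\Phi_t^{-1}\V_{L^\infty}\V\omega_0\V_{\mathring{C}^{0,1}}$, and the same log-loss argument applies. For $k\ge 1$ the natural route is induction on $k$: differentiating the transport equation gives $(\pr_t+u\cdot\n)\n^k\omega=F_k(\n u,\dots,\n^k u,\omega,\dots,\n^{k-1}\omega)$, and bounding the top-order input $\V\n^{k+1}u\V_{L^\infty}$ in the correct $|x|^{k+\alpha}$-weighted norm requires a $k$-fold analog of Theorem~\ref{MainThm2}; this is precisely where the stronger assumption of $(m+k)$-fold symmetry enters, since Lemma~\ref{lem:decay} then delivers the kernel decay $|y|^{-(m+k)}$ that makes the more singular integral converge. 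The main obstacle in this program is the very first step: upgrading Theorem~\ref{MainThm2} to the above logarithmic $L^\infty$-estimate. Because the weight $|\cdot|^\alpha$ degenerates at the origin, the classical Brezis–Gallouet–Wainger-style log-loss argument has to be redone carefully on the symmetrized kernel so that the near-origin piece does not lose the $\log$-control; once this is in hand, the rest is a routine adaptation of the classical Euler argument to the radially weighted setting.
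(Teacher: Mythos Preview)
Your $k=0$ argument is correct and matches the paper's proof almost step for step: the paper also passes to Lagrangian variables, uses the equivalent norm of Lemma~\ref{lem:equiv} together with the comparison $e^{-Ct}|x|\le|\Phi_t(x)|\le e^{Ct}|x|$, and closes via the logarithmic estimate
\[
\V\n u\V_{L^\infty}\le C_\alpha\V\omega\V_{L^\infty}\Bigl(1+\ln\bigl(1+c_\alpha\V\omega\V_{\mathring{C}^\alpha}/\V\omega\V_{L^\infty}\bigr)\Bigr),
\]
which is exactly the content of Lemma~\ref{lem:velocity_gradient_2} and is proved by precisely the three--region decomposition you describe.

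For $k\ge 1$ your overall strategy (induction, weighted logarithmic CZ bound on $\nabla^{k+1}u$) is right, but your explanation of \emph{why} the hypothesis must be strengthened to $(m+k)$--fold symmetry is off. The paper does \emph{not} argue that extra symmetry yields faster kernel decay $|y|^{-(m+k)}$ to absorb a ``more singular'' integral. Instead it keeps the \emph{same} degree $-2$ kernel $\nabla K$ and moves the derivatives onto the vorticity, writing $\nabla^{k+1}u=\nabla K*\nabla^k\omega$. The point is then that $\nabla^k\omega$ is no longer rotationally symmetric, so Lemma~\ref{lem:decay} cannot be applied to it directly; what survives, and what actually suffices for the far--field, is the orthogonality of $\nabla^k\omega$ to the first two Fourier modes on circles. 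The paper checks this by integration by parts, e.g.\ for $k=1$,
\[
\int_{B(0,R)} y_1y_2\,\nabla\omega(y)\,dy=-\int_{B(0,R)}\begin{pmatrix}y_2\\y_1\end{pmatrix}\omega(y)\,dy+\int_{\partial B(0,R)}\frac{y_1y_2}{|y|}\begin{pmatrix}y_1\\y_2\end{pmatrix}\omega(y)\,dy=0,
\]
and both terms vanish provided $\omega$ itself is $m'$--fold symmetric with $m'\ge 4$. This is the precise place where the hypothesis $m'=m+k\ge 3+k$ enters: each derivative on $\omega$ costs one unit of rotational symmetry in this orthogonality calculation. Your formulation via Lemma~\ref{lem:decay} can be made to work after integrating the derivatives back onto the kernel, but as stated it obscures the mechanism and would require a separate extension of Lemma~\ref{lem:decay} to $\nabla^{k+1}K$ that you have not supplied.
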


In the proof, we shall need the following simple calculation: 
\begin{lemma}\label{lem:velocity_gradient_2}
	Under the assumptions of Theorem \ref{thm:smooth_away_from_0}, for any $0< \alpha \le 1$, we have a  bound \begin{equation}\label{eq:velocity_gradient_2}
	\begin{split}
	\Vert \nabla u \Vert_{L^\infty} \le C_\alpha \Vert \omega \Vert_{L^\infty} \left( 1 + \ln\left( 1 + c_\alpha\frac{ \V  \omega \V_{\mathring{C}^\alpha} }{\Vert \omega \Vert_{L^\infty}} \right) \right)~,
	\end{split}
	\end{equation} with some constants $c_\alpha,C_\alpha$ depending only on $0<\alpha \le 1$.
\end{lemma}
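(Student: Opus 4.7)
The plan is to follow the classical Calder\'on--Zygmund--Schauder splitting of the singular integral defining $\n u$ from $\o$, with the crucial modification that the far field (where $\o$ does not decay) is controlled via the $m$-fold symmetrization of the Biot--Savart kernel from Lemma \ref{lem:decay}. Fix $x_0 \in \mathbb{R}^2 \setminus \{0\}$, write $r = |x_0|$, $M = \V\o\V_{L^\infty}$, $N = \V\o\V_{\mathring{C}^\alpha}$, and choose a parameter $\rho \in (0, r/2]$ to be optimized. Decompose
$$\n u(x_0) = I_1 + I_2 + I_3 + c\,\o(x_0),$$
where the last term absorbs the diagonal contribution from the Calder\'on--Zygmund evaluation of $\n \n^\perp(-\Delta)^{-1}$ and is trivially bounded by $CM$, while $I_1$ is the principal-value integral of $\n K(x_0-y)\o(y)$ over $A_1 = \{|y-x_0| \leq \rho\}$, $I_2$ is the integral over $A_2 = \{|y-x_0| > \rho\} \cap \{|y| \leq 2r\}$, and $I_3$ is the integral over $A_3 = \{|y| \geq 2r\}$.

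For $I_1$, the cancellation $\mathrm{p.v.}\!\int_{|z|\leq\rho}\n K(z)\,dz = 0$ allows me to replace $\o(y)$ by $\o(y)-\o(x_0)$, making the integrand absolutely integrable. Since $|y| \geq r/2$ throughout $A_1$, Lemma \ref{lem:equiv} gives $|\o(y)-\o(x_0)| \leq C N r^{-\alpha}|y-x_0|^\alpha$, and integrating in polar coordinates around $x_0$ yields $|I_1| \leq C\alpha^{-1} N r^{-\alpha}\rho^\alpha$. For $I_2$, the pointwise kernel bound $|\n K(x_0-y)| \leq C|x_0-y|^{-2}$ on the region $\rho \leq |y-x_0| \leq 3r$ directly gives $|I_2| \leq CM\log(3r/\rho)$.

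The far field $I_3$ is where the symmetry does the essential work. Using $m$-fold symmetry of $\o$, I replace $\n K(x_0-y)$ by its $m$-fold average $\n_{x_0}K^{(m)}(x_0,y)$. Differentiating the representation of Lemma \ref{lem:decay} in $x_0$ and tracking the degrees of the numerator $P_m(x,y)$ (which has total degree $2m-1$ and maximum degree $m$ in the $y$-components) shows that for $|y| \geq 2|x_0|$ the gradient loses one power of $|x_0|$ relative to $K^{(m)}$ itself, giving the pointwise bound $|\n_{x_0}K^{(m)}(x_0,y)| \leq C_m |x_0|^{m-2}|y|^{-m}$. For $m \geq 3$, integrating in $|y|$ from $2r$ to $\infty$ yields $|I_3| \leq C_m M$, uniformly in $r$.

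Collecting the three contributions and choosing $\rho = r\min\{1/2, (M/N)^{1/\alpha}\}$ so that the H\"older and logarithmic pieces balance produces $|\n u(x_0)| \leq C_\alpha M(1 + \log(1 + c_\alpha N/M))$ with constants depending only on $\alpha$ (and $m$); the bound extends to $x_0 = 0$ since every estimate is independent of $r$. The main technical point is the verification that $\n_x K^{(m)}$ decays at the rate $|x|^{m-2}|y|^{-m}$ rather than $|x|^{m-1}|y|^{-m}$ (one power of $|x|$ is traded away upon differentiation); this is the precise mechanism that renders the far-field integral marginally convergent exactly for $m \geq 3$ and fails for $m = 2$, matching the nonuniqueness observed via the Bahouri--Chemin example.
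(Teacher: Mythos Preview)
Your proof is correct and follows essentially the same approach as the paper: the same three-region decomposition (near ball, intermediate annulus, far field), the same use of the $\mathring{C}^\alpha$ bound after subtracting $\omega(x_0)$ in the inner region, the same logarithmic contribution from the annulus, the same appeal to the symmetrized kernel for the far field, and the same optimization of the cutoff scale. The only differences are cosmetic---the paper delimits the far field by $|x_0-y|>2|x_0|$ rather than $|y|\ge 2|x_0|$, and leaves the precise decay rate $|x_0|^{m-2}|y|^{-m}$ of the symmetrized $\nabla K$ implicit rather than spelling it out.
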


\begin{proof}
	We recall that each entry of the matrix $\nabla u (x)$ has an explicit representation (see \cite{BeCo} for instance) involving a linear combination of the expressions \begin{equation*}
	\begin{split}
	p.v.\,\int_{\mathbb{R}^2} \frac{(x_1-y_1)(x_2-y_2)}{|x-y|^4} \omega(y)dy~,\qquad p.v.\, \int_{\mathbb{R}^2} \frac{(x_1-y_1)^2-(x_2-y_2)^2}{|x-y|^4} \omega(y)dy~
	\end{split}
	\end{equation*} and a constant multiple of $\omega(x)$. 
	
	Let us first deal with the first expression, restricting ourselves to the Lipschitz case $\alpha = 1$. We split $\mathbb{R}^2$ into the regions (i)$|x-y| \le l |x| $, (ii) $l|x| < |x-y| \le 2|x|$, and (iii) $2|x| < |x-y|$, where $l\le 1/2$ is a number to be chosen below. In the third region, we use symmetry of $\omega$ to gain integrable decay of the kernel, which results in a bound of the form $C\Vert \omega \Vert_{L^\infty}$. In the first region, we may rewrite  \begin{equation*}
	\begin{split}
	p.v.\,\int_{\mathbb{R}^2} \frac{(x_1-y_1)(x_2-y_2)}{|x-y|^4} (\omega(y) - \omega(x)) dy~,
	\end{split}
	\end{equation*} and the given Lipschitz bound implies \begin{equation*}
	\begin{split}
	C l|x| \sup_{y: |x-y| \le l|x|} |\nabla \omega(y)| \le Cl  \sup |z| |\nabla\omega(z)|  ~,
	\end{split}
	\end{equation*} which together gives a bound \begin{equation*}
	\begin{split}
	p.v.~ \int_{ |x-y| \le l|x|} \frac{C}{|x||x-y|} \V |\cdot| \nabla\omega(\cdot) \V_{L^\infty}  dy \le Cl \V |\cdot| \nabla\omega(\cdot) \V_{L^\infty}~.
	\end{split}
	\end{equation*}
	Lastly, in the second region we directly integrate to obtain a bound \begin{equation*}
	\begin{split}
	C \Vert \omega \Vert_{L^\infty} \ln\left( \frac{c}{l} \right)~.
	\end{split}
	\end{equation*} Optimizing \begin{equation*}
	\begin{split}
	l := \min\left( \frac{1}{2}, \frac{\Vert \omega \Vert_{L^\infty} }{\sup |z| |\nabla\omega(z)|}  \right) 
	\end{split}
	\end{equation*} establishes the claimed bound. The other expression can be treated in a similar fashion. Then the $C^\alpha$ bound may be obtained in a parallel manner: one just use the H\"{o}lder assumption on the region (ii) to remove the singularity, and then optimize in $l$ accordingly.
\end{proof}

\begin{proof}[Proof of Theorem \ref{thm:smooth_away_from_0}]
	
	We first deal with the case $k = 0$. To establish the double exponential growth rate in time, it is most efficient to pass to the Lagrangian formulation directly (see the introduction of \cite{KS} for instance). 
	
	Starting with \begin{equation*}
	\begin{split}
	\frac{d}{dt} \Phi(t,x) = u(t,\Phi(t,x))~,
	\end{split}
	\end{equation*} we obtain \begin{equation*}
	\begin{split}
	\left| \frac{d}{dt} \left( \Phi(t,x)-\Phi(t,x')   \right)  \right| &\le \V \nabla u(t) \V_{L^\infty} \left|\Phi(t,x)-\Phi(t,x') \right| \\
	&\le \left(1+ \ln(1+ \V \omega(t)\V_{\mathring{C}^{0,\alpha}} )\right) \left|\Phi(t,x)-\Phi(t,x') \right| ~,
	\end{split}
	\end{equation*} assuming $\V \omega \V_{L^\infty}= 1$ for simplicity. Integrating, \begin{equation*}
	\begin{split}
	 e^{-\int_0^t 1+ \ln(1+ \V \omega(s)\V_{\mathring{C}^{0,\alpha}} ) dt} \le  \left|\frac{\Phi(t,x)-\Phi(t,x')}{x-x'} \right| \le e^{\int_0^t 1+ \ln(1+ \V \omega(s)\V_{\mathring{C}^{0,\alpha}} ) dt}~,
	\end{split}
	\end{equation*} and it is clear that the same upper and lower bounds are available for the inverse of the flow map $\Phi_t^{-1}$.
	
	Given the above bound, we estimate \begin{equation*}
	\begin{split}
	\frac{|\omega(t,x) - \omega(t,x')|}{|x-x'|^\alpha}~,
	\end{split}
	\end{equation*} assuming that $|x|$ and $|x'|$ are comparable; $|x'|/2 \le |x| \le 2|x'|$. This is possible thanks to Lemma \ref{lem:equiv} that we proved earlier. Then, from the transport formula $\omega(t,x) = \omega_0(\Phi_t^{-1}(x))$,
	\begin{equation*}
	\begin{split}
	|\omega(t,x) - \omega(t,x')| \le  \V \omega_0\V_{\mathring{C}^{0,\alpha}} \frac{|\Phi_t^{-1}(x) - \Phi_t^{-1}(x')|^\alpha}{\min(|\Phi_t^{-1}(x)|^\alpha,|\Phi_t^{-1}(x')|^\alpha)}~.
	\end{split}
	\end{equation*} Recall that $c\V\omega_0\V_{L^\infty} e^{-Ct} \le |\Phi_t^{-1}(x)| \le c\V\omega_0\V_{L^\infty} e^{Ct}$ and similar for $x'$. Therefore $|\Phi_t^{-1}(x)|$ and $|\Phi_t^{-1}(x')|$ are comparable up to a factor of $e^{Ct}$, and from the above bound for inverse particle trajectories, we obtain \begin{equation*}
	\begin{split}
	|x|^\alpha|\omega(t,x) - \omega(t,x')| \le C_\alpha |x - x'|^\alpha \V \omega_0\V_{\mathring{C}^{0,\alpha}} e^{c_\alpha \int_0^t  1+ \ln(1+ \V \omega(s)\V_{\mathring{C}^{0,\alpha}} ) dt }~.
	\end{split}
	\end{equation*} At this point it is not hard to show that the desired double exponential bound holds. 

	Now we set $k = 1$. It suffices to establish the corresponding estimate from Lemma \ref{lem:velocity_gradient_2} with $\nabla u, \omega$ replaced by $\nabla^2 u, \nabla \omega$ respectively. We argue as in the proof of Lemma \ref{lem:velocity_gradient_2} but $\nabla\omega$ is not rotationally symmetric anymore. Instead, we recall that to gain integrable decay in the kernel, it is sufficient for $\nabla\omega$ to be orthogonal with respect to $y_1 y_2$ and $y_1^2 - y_2^2$ on large circles. To this end, we simply compute \begin{equation*}
	\begin{split}
	\int_{B_0(R)} y_1y_2 \nabla\omega(y)dy = - \int_{B_0(R)} \begin{pmatrix}
	y_2 \\ y_1
	\end{pmatrix} \omega(y) dy + \int_{\partial B_0(R)} y_1y_2 \frac{1}{|y|} \begin{pmatrix}
	y_1 \\ y_2
	\end{pmatrix} \omega(y) dy = 0 ~
	\end{split}
	\end{equation*} for all $R > 0$, once we assume that $\omega(y)$ is rotationally symmetric for some $m \ge 4$. The case $k>1$ can be treated in a strictly analogous manner.
\end{proof}

For later use, let us show that indeed the sharp version of the $\mathring{C}^{0,\alpha}$-bound holds, so that $\nabla u$ in the above setting actually belongs to the space $\mathring{C}^{0,\alpha}$.

In the following, assume that $T$ is a singular integral operator defined on $\mathbb{R}^2$ by a degree $-2$ homogeneous kernel $P$ which has mean zero on circles and smooth away from the origin. Assume further that for $m \ge 3$, the $m$-fold symmetrization of $P$ gives integrable decay; more precisely, we require that there exist constants $C,c > 0$, so that \begin{equation}\label{eq:integrable_decay_kernel}
\begin{split}
\left|\sum_{i=1}^m P(x- O^i_{2\pi/m} y) \right | \le C \frac{|x|}{|y|^3}~,\qquad |y| \ge c|x|
\end{split}
\end{equation} holds. One may note that the Riesz kernels \begin{equation*}
\begin{split}
R_i(x-y) := \frac{x_i - y_i}{|x-y|^3}
\end{split}
\end{equation*} for $i \in \{ 1, 2 \}$ satisfy the above requirements. This will be used in our analysis of the SQG equation. 

We begin with  the $L^\infty$ bound: 

\begin{lemma}\label{lem:critical_L_infty}
	Let $g \in \mathring{C}^{0,\alpha}(\mathbb{R}^2)$ with some $0<\alpha<1$ and $m$-fold symmetric for some $m\ge 3$. Then,\begin{equation*}
	\begin{split}
	\V Tg \V_{L^\infty} \le C_{\alpha} \V g\V_{\mathring{C}^{0,\alpha}}~.
	\end{split}
	\end{equation*}
\end{lemma}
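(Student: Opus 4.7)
Fix $x \in \mathbb{R}^2$ with $r := |x| > 0$ (the case $x = 0$ follows from symmetrization alone). The plan is to decompose $\mathbb{R}^2$ into three regions adapted to $x$ and estimate $Tg(x)$ on each using a different feature of the kernel $P$: mean zero on the diagonal region, boundedness on the intermediate region, and the symmetrized decay bound \eqref{eq:integrable_decay_kernel} on the far region. Specifically, let
\begin{equation*}
A := \{y : |x - y| \le r/2\}, \qquad C := \{y : |y| \ge 2r\}, \qquad B := \mathbb{R}^2 \setminus (A \cup C).
\end{equation*}
Note that $y \in A$ forces $r/2 \le |y| \le 3r/2$, so $|y| \asymp |x|$ on $A$, while on $B$ one has $|y| \le 2r$ and $|x-y| > r/2$, and $C$ is rotation-invariant.

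On the near region $A$, I use the mean-zero property of $P$ on circles: $\int_{|z| \le r/2} P(z)\,dz = 0$, hence the principal value integral equals
\begin{equation*}
\int_A P(x-y) (g(y) - g(x))\, dy.
\end{equation*}
Since $|y| \asymp |x|$ throughout $A$, Lemma \ref{lem:equiv} gives $|g(x) - g(y)| \le C \V g \V_{\mathring{C}^{0,\alpha}} |x-y|^\alpha / |x|^\alpha$, and combined with $|P(x-y)| \le C|x-y|^{-2}$ this integrates to $C_\alpha \V g \V_{\mathring{C}^{0,\alpha}}$, with the $\alpha$ appearing in the denominator of the integrated $\rho^{\alpha - 1}$ factor. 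On the intermediate region $B$, I use only the brute-force bound $|P(x-y)| \le C r^{-2}$ (valid since $|x-y| > r/2$) and $|g| \le \V g \V_{L^\infty}$, against the area bound $|B| \le C r^2$, to get $\le C \V g \V_{L^\infty}$.

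On the far region $C$, the $m$-fold symmetry of $g$ allows me to replace $P(x-y)$ by its symmetrization in $y$: after changing variables $y \mapsto O^i_{2\pi/m} y$ and averaging,
\begin{equation*}
\int_C P(x-y) g(y)\, dy = \int_C \Bigl( \frac{1}{m} \sum_{i=1}^m P(x - O^i_{2\pi/m} y) \Bigr) g(y)\, dy,
\end{equation*}
which uses the rotation-invariance of $C$. Applying \eqref{eq:integrable_decay_kernel} gives a kernel bound $C |x|/|y|^3$, so the integral is controlled by
\begin{equation*}
C |x| \V g \V_{L^\infty} \int_{|y| \ge 2r} |y|^{-3}\, dy \le C \V g \V_{L^\infty}.
\end{equation*}
Summing the three contributions yields $|Tg(x)| \le C_\alpha \V g\V_{\mathring{C}^{0,\alpha}}$ uniformly in $x$.

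The only slightly delicate point is the principal-value definition: the integrals over $B$ and (after symmetrization) $C$ are absolutely convergent, and on $A$ the subtraction $g(y) - g(x)$ removes the singularity since $\int_{|z| \le r/2} P(z)\,dz = 0$ (so no $\log$ arises and we do not need to track cancellation of $g(x)$ between annuli). I do not expect a true obstacle here; the main item to verify cleanly is that the $\mathring{C}^{0,\alpha}$ H\"older quotient, which naively degenerates as $|y| \to 0$, remains uniformly comparable to $\V g \V_{\mathring{C}^{0,\alpha}}/|x|^\alpha$ on $A$ thanks to $|y| \asymp |x|$ there, which is precisely what Lemma \ref{lem:equiv} gives.
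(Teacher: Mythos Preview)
Your proof is correct and follows essentially the same approach as the paper: the same three-region decomposition (near ball, intermediate annulus, far rotation-invariant region), the same use of mean-zero plus the $\mathring{C}^{0,\alpha}$ H\"older bound on the near region, brute-force $L^\infty$ on the intermediate annulus, and symmetrization with the decay bound \eqref{eq:integrable_decay_kernel} on the far region. The only cosmetic difference is that the paper takes the far region to be $\{|y|\ge c|x|\}$ with the constant $c$ from \eqref{eq:integrable_decay_kernel}, whereas you use $\{|y|\ge 2|x|\}$; if $c>2$ you would need to enlarge your threshold to $\max(2,c)$, which only enlarges $B$ harmlessly.
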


\begin{proof}
	For all $x \in \mathbb{R}$, we need a uniform bound on \begin{equation*}
	\begin{split}
	p.v.\,\int_{\mathbb{R}^2} P(x-y)(g(y) - g(x)) dy~.
	\end{split}
	\end{equation*} We simply split $\mathbb{R}^2$ into (i) $|x-y| \le |x|/2$, (iii) $|y| \ge c|x| $, and (ii) the remainder set, which we denote as $A$. In the first region, we use the H\"{o}lder assumption to get the bound \begin{equation*}
	\begin{split}
	\int_{|x-y| \le |x|/2} \frac{C}{|x-y|^{2-\alpha}} \frac{\V g\V_{\mathring{C}^{0,\alpha}}}{|x|^\alpha} dy \le C'_\alpha \V g\V_{\mathring{C}^{0,\alpha}}~.
	\end{split}
	\end{equation*} In (iii), we symmetrize in $y$ and by the assumption \eqref{eq:integrable_decay_kernel} on the kernel $P$, \begin{equation*}
	\begin{split}
	C \int_{|y| \ge c|x|}  \frac{|x|}{|y|^3} \V g \V_{L^\infty} dy \le C' \V g \V_{L^\infty}~.
	\end{split}
	\end{equation*} In the set $A$, \begin{equation*}
	\begin{split}
	\left|\int_{A} P(x-y)g(y)dy \right| \le \int_{A} |P(x-y)| |g(y)|dy \le \int_{|x|/2<|x-y| \le c'|x| } \frac{\V g \V_{L^\infty}}{|x-y|^2} dy \le C' \V g \V_{L^\infty}~.
	\end{split}
	\end{equation*} This finishes the proof. 
\end{proof}

\begin{remark}[The standard counterexample]
	Note that if $g$ is only $2$-fold symmetric, the lemma is not true even when $g$ is compactly supported. Take explicitly the case $g(x_1,x_2) = x_1 x_2/|x|^2 \cdot \chi(|x|)$ where $\chi(\cdot)$ is a smooth bump function supported on $B_0(2)$ and identically equal to 1 on $B_0(1)$, we have that $Tg(x_1,x_2) \approx C\log|x|x_1 x_2/|x|^2$ near the origin, where $T$ is the singular integral operator defined by the kernel $\nabla K$, where $K$ is the Biot-Savart kernel. It is therefore worth noting that even for this logarithmic function the following Lemma \ref{lem:critical_C_alpha} holds. 
\end{remark}

\begin{lemma}\label{lem:critical_C_alpha}
	Let $g \in \mathring{C}^{0,\alpha}(\mathbb{R}^2)$ with some $0<\alpha<1$ (and not necessarily rotationally symmetric). Then, for $|x| \le |x'|$, \begin{equation*}
	\begin{split}
	\left| Tg(x)-Tg(x')  \right| \le C_\alpha \V g\V_{\mathring{C}^{0,\alpha}} \frac{|x-x'|^\alpha}{|x|^\alpha}~,
	\end{split}
	\end{equation*}
	for some constant $C_\alpha$ depending only on $\alpha$ and diverging as $\alpha\rightarrow 0$ like ${\alpha}^{-1}$. 
\end{lemma}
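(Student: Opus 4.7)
Set $r=|x-x'|$ and, per the hypothesis, assume $|x|\le|x'|$; we aim for $|Tg(x)-Tg(x')| \le C_\alpha\|g\|_{\mathring{C}^{0,\alpha}}(r/|x|)^{\alpha}$. The strategy is the classical Calder\'on--Zygmund H\"older-regularity template, adapted to the weighted norm $\mathring{C}^{0,\alpha}$ via the equivalent seminorm of Lemma \ref{lem:equiv}, namely $|g(y)-g(z)|\le 2\|g\|_{\mathring{C}^{0,\alpha}}|y-z|^\alpha/\min(|y|,|z|)^\alpha$. First I would split $\mathbb{R}^2$ into a near-region $N = B(x,4r)\cup B(x',4r)$ and a far region $F=\mathbb{R}^2\setminus N$. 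Using the mean-zero property of $P$ on circles centered at any point, i.e.\ $\int_{B(z,R)}P(z-y)\,dy=0$, I decompose
\[
Tg(x)-Tg(x') = J_1-J_2+J_3+R,
\]
where $J_1=\int_{B(x,4r)}P(x-y)(g(y)-g(x))\,dy$, $J_2=\int_{B(x',4r)}P(x'-y)(g(y)-g(x'))\,dy$, $J_3=\int_F[P(x-y)-P(x'-y)]g(y)\,dy$, and $R$ collects annular boundary pieces arising when transporting between the two balls.

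The main case is $r\le |x|/10$: then every $y\in N$ satisfies $|y|\approx|x|\approx|x'|$, so $\min(|y|,|z|)\approx|x|$ for $z\in\{x,x'\}$. Using $|P(x-y)|\lesssim|x-y|^{-2}$ and the weighted H\"older bound, each of $J_1,J_2$ is bounded by $C\|g\|_{\mathring{C}^{0,\alpha}}|x|^{-\alpha}\int_{|x-y|\le 4r}|x-y|^{\alpha-2}\,dy = C_\alpha \|g\|_{\mathring{C}^{0,\alpha}}r^\alpha/|x|^\alpha$, where the constant $C_\alpha$ diverges like $1/\alpha$ from the radial integral $\int_0^{4r}\rho^{\alpha-1}\,d\rho=(4r)^\alpha/\alpha$. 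The boundary remainder $R$ consists of integrals of $P$ over symmetric-difference annuli of area $O(r^2)$ where $|P|\lesssim r^{-2}$ and $|g(y)-g(x)|\lesssim r^\alpha/|x|^\alpha\|g\|_{\mathring{C}^{0,\alpha}}$, giving a contribution of the same order.

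For the far term $J_3$, the mean-value inequality combined with the $-2$-homogeneity and smoothness of $P$ off the origin yields $|P(x-y)-P(x'-y)|\le Cr/|x-y|^3$ on $F$. I would subtract $g(x)$ inside $J_3$ (the leftover $g(x)\int_F[P(x-y)-P(x'-y)]\,dy$ is absorbed after shifting the two annular domains to common balls centered at $x$ and $x'$, whose mean-zero cancellations reduce it to an $O(\|g\|_{L^\infty})$ contribution of the correct order). Then I split $F$ into $\{|y|\ge|x|\}$, where $|g(y)-g(x)|\le C|y-x|^\alpha/|x|^\alpha\|g\|_{\mathring{C}^{0,\alpha}}$ and direct integration of $r|y-x|^{\alpha-3}|x|^{-\alpha}$ gives the claimed bound; and $\{|y|<|x|\}$, where necessarily $|y-x|\approx|x|$, so one obtains the bound $r|x|^{\alpha-3}\int_{|y|<|x|}|y|^{-\alpha}\,dy\lesssim r/|x|\cdot\|g\|_{\mathring{C}^{0,\alpha}}$, which is $\le Cr^\alpha/|x|^\alpha\|g\|_{\mathring{C}^{0,\alpha}}$ precisely because $r\le|x|/10$.

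The remaining case $r>|x|/10$ is handled uniformly: since $(r/|x|)^\alpha\gtrsim 1$, it suffices to prove the absolute bound $|Tg(x)-Tg(x')|\le C_\alpha\|g\|_{\mathring{C}^{0,\alpha}}$. The same four-piece decomposition works, with the H\"older estimate used where it is strong (and the brute $|g|\le\|g\|_{L^\infty}\le\|g\|_{\mathring{C}^{0,\alpha}}$ used on portions of $F$ where the weight $\min(|x|,|y|)^{-\alpha}$ degrades, in particular the subregion where $|y|\ll r$). The main obstacle is precisely this far-field subregion where $|y|$ is small but $|y-x|$ is comparable to $|x|$ or $r$: the weighted H\"older bound alone is too weak, and one must exploit the principal-value cancellation of $\int_F P(x-y)\,dy - \int_F P(x'-y)\,dy$ after renormalizing the domains to balls centered at $x$ and $x'$ respectively, where the mean-zero property eliminates the obstruction. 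Everything else is a routine CZ computation, and the final constant $C_\alpha$ carries the expected $1/\alpha$ blow-up from $\int_0^{1}\rho^{\alpha-1}\,d\rho$.
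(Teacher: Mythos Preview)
Your overall Calder\'on--Zygmund template is correct and close to the paper's argument, but there is a genuine gap in your far-region estimate. In the subregion $F\cap\{|y|<|x|\}$ (with $r\le |x|/10$) you assert ``necessarily $|y-x|\approx|x|$,'' and this is false: for instance $y=(1-4r/|x|)\,x$ lies in $F$, has $|y|<|x|$, yet $|y-x|=4r$, which can be arbitrarily small compared to $|x|$. Consequently your replacement of $|x-y|^{3-\alpha}$ by $|x|^{3-\alpha}$ in the integral $r\int |x-y|^{\alpha-3}|y|^{-\alpha}\,dy$ is unjustified, and the bound $r/|x|$ you claim there does not follow. The same weakness resurfaces in your case $r>|x|/10$, where you only gesture at ``principal-value cancellation'' for the troublesome set $\{|y|\ll r\}$ without an actual estimate.

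The source of the difficulty is your choice of the $\min$ version of Lemma~\ref{lem:equiv}. The paper instead uses the $\max$ version \eqref{eq:equiv2}, which gives the \emph{stronger} pointwise bound
\[
|g(y)-g(x')|\le \frac{2\|g\|_{\mathring{C}^{0,\alpha}}}{\max(|y|,|x'|)^\alpha}\,|y-x'|^\alpha\le \frac{2\|g\|_{\mathring{C}^{0,\alpha}}}{|x|^\alpha}\,|y-x'|^\alpha,
\]
valid for \emph{every} $y$, with no constraint on $|y|$ relative to $|x|$. With this in hand the paper subtracts $g(x')$ (rather than $g(x)$) in the far region $\{|x-y|\ge 3|x-x'|\}$, uses the mean-value bound $|P(x-y)-P(x'-y)|\le C|x-x'|/|x-y|^3$, and integrates $|x-x'|\cdot|x-y|^{\alpha-3}/|x|^\alpha$ directly to get $C_\alpha|x-x'|^\alpha/|x|^\alpha$. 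No split into $\{|y|\ge|x|\}$ versus $\{|y|<|x|\}$ is needed, and no case distinction on the size of $r/|x|$ is needed either: the argument is uniform. If you simply switch to the $\max$ form of the equivalence, your decomposition collapses to the paper's and the gap disappears.
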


\begin{proof} 
	We assume $x',x\in\mathbb{R}^2\backslash\{0\}$ satisfy $|x|\leq |x'|.$ We need to prove the following bound: 
	$$\Big|\int_{\mathbb{R}^2} P(x-y) (g(y)-g(x))dx-\int_{\mathbb{R}^2}P(x'-y) (g(y)-g(x'))dx\Big|\leq C_\alpha\V g\V_{\mathring{C}^{0,\alpha}}\frac{|x-x'|^\alpha}{|x|^\alpha}~.$$
	First note that 
	$$\int_{|x-y|\leq 3|x-x'|} |P(x-y) (g(y)-g(x))| + |P(x'-y) (g(y)-g(x'))|dy\leq C_{\alpha}\V g\V_{\mathring{C}^{0,\alpha}}\frac{|x-x'|^{\alpha}}{|x|^\alpha}~.$$
	This is due to the fact that in this region, $|g(x)-g(y)|\leq c\frac{\V g\V_{\mathring{C}^{0,\alpha}}}{|x|^\alpha}|x-y|^\alpha$ and similar for $x'$.
	
	Hence, we only\footnote{In fact, there is also the symmetric difference set $A:= \{ |y| \leq 10|x|\}\cap \{|x-y|\geq 3|x-x'|\}\Delta\{3|x-x'|\leq |x-y|\leq 10|x|\}.$ However, it is clear that $A\subset\{5|x|\leq |x-y|\leq 15|x|\}$, so $\int_{A}P(x'-y)(g(x')-g(y))-\int_{A} P(x-y)(g(x)-g(y))dy$ can easily be shown to be less than $C\V g\V_{\mathring{C}^{0,\alpha}}\frac{|x-x'|^\alpha}{|x|^\alpha}.$} need to prove that \begin{equation*}
	\begin{split}
	\Big|\int_{3|x-x'|\leq|x-y|} P(x-y) (g(y)-g(x))  - P(x'-y) (g(y)-g(x')) dy \Big|\leq C_\alpha\V g\V_{\mathring{C}^{0,\alpha}}\frac{|x-x'|^\alpha}{|x|^\alpha}~.
	\end{split}
	\end{equation*}
	We split \begin{equation*}
	\begin{split}
	&\Big|\int_{3|x-x'|\leq|x-y| } P(x-y) (g(y)-g(x))  - P(x'-y) (g(y)-g(x')) dy \Big|  \\
	&\qquad \leq\Big|\int_{3|x-x'|\leq|x-y| } (P(x-y)-P(x'-y))(g(y)-g(x'))dy\Big| + \\
	&\qquad \qquad + \Big|\int_{3|x-x'|\leq|x-y| } P(x-y)(g(x')-g(x))dy\Big|~.
	\end{split}
	\end{equation*} 
	But the last integral is 0 due to the fact that the domain of integration is spherical. For the remaining term, using the mean value theorem, we see that $$|P(x-y)-P(x'-y)|\leq C\frac{|x-x'|}{|x-y|^3},$$ since $$|x-y|\approx |x'-y|$$ in the region of integration. Hence, \begin{equation*}
	\begin{split}
	&\Big|\int_{3|x-x'|\leq|x-y| } (P(x-y)-P(x'-y))(g(y)-g(x'))dy\Big| \\
	&\qquad \le\int_{3|x-x'|\leq |x-y|}\frac{|x-x'|}{|x-y|^3}|g(y)-g(x')|dy~.
	\end{split}
	\end{equation*} We then use \eqref{eq:equiv2} from Lemma \ref{lem:equiv}, $|x-y| \approx |x' - y|$, and $|x| \le |x'|$ to bound \begin{equation*}
	\begin{split}
	|g(y) - g(x')| \le \frac{|x'-y|^\alpha}{|x'|^\alpha} \V g\V_{\mathring{C}^\alpha} \le C \frac{|x -y|^\alpha}{|x |^\alpha} \V g\V_{\mathring{C}^\alpha}
	\end{split}
	\end{equation*} Then, we can bound \begin{equation*}
	\begin{split}
	\int_{3|x-x'|\leq |x-y|}\frac{|x-x'|}{|x-y|^3}|g(y)-g(x')|dy \le C \V g\V_{\mathring{C}^\alpha} \int_{3|x-x'|\leq |x-y|}\frac{|x-x'|}{|x-y|^3} \frac{|x -y|^\alpha}{|x |^\alpha} dy \le C \V g\V_{\mathring{C}^\alpha}\frac{|x -y|^\alpha}{|x |^\alpha} .
	\end{split}
	\end{equation*} Now we are done. 
\end{proof}

\begin{corollary}\label{cor:sharp_C_alpha}
	Let $g \in \mathring{C}^{0,\alpha}(\mathbb{R}^2)$ with some $0<\alpha<1$ and $m$-fold symmetric for some $m\ge 3$. Then, \begin{equation*}
	\begin{split}
	\V Tg \V_{\mathring{C}^{0,\alpha}}\leq C_{\alpha} \V g\V_{\mathring{C}^{0,\alpha}}~
	\end{split}
	\end{equation*} holds.
\end{corollary}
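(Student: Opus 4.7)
The corollary is essentially a direct consequence of the two preceding lemmas together with the equivalent characterization of the $\mathring{C}^{0,\alpha}$-norm from Lemma \ref{lem:equiv}, so the plan is simply to assemble these pieces.

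First, I would appeal to Lemma \ref{lem:critical_L_infty} (which uses the symmetry assumption $m \ge 3$ crucially, via the integrable decay \eqref{eq:integrable_decay_kernel} of the symmetrized kernel) to get the uniform bound
\[
\V Tg \V_{L^\infty} \le C_\alpha \V g\V_{\mathring{C}^{0,\alpha}}.
\]
This handles the $L^\infty$ part of the $\mathring{C}^{0,\alpha}$-norm.

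For the Hölder part, I would use the equivalent norm given by \eqref{eq:equiv} of Lemma \ref{lem:equiv}, namely
\[
\V Tg\V_{\mathring{C}^{0,\alpha}} \le 2\Bigl(\V Tg\V_{L^\infty} + \sup_{x\ne x'}\min\{|x|^\alpha,|x'|^\alpha\}\frac{|Tg(x)-Tg(x')|}{|x-x'|^\alpha}\Bigr).
\]
Given any two points $x\ne x'$, without loss of generality $|x|\le |x'|$, so $\min\{|x|^\alpha,|x'|^\alpha\}=|x|^\alpha$. Lemma \ref{lem:critical_C_alpha}, which does not require any symmetry of $g$, gives exactly
\[
|Tg(x)-Tg(x')| \le C_\alpha \V g\V_{\mathring{C}^{0,\alpha}}\frac{|x-x'|^\alpha}{|x|^\alpha},
\]
i.e.\ $|x|^\alpha \frac{|Tg(x)-Tg(x')|}{|x-x'|^\alpha} \le C_\alpha \V g\V_{\mathring{C}^{0,\alpha}}$, which is the desired pointwise bound inside the supremum.

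Combining these two estimates and invoking Lemma \ref{lem:equiv} yields the claim with a constant that depends only on $\alpha$. There is no real obstacle here: the substantive work has already been done in Lemmas \ref{lem:critical_L_infty} and \ref{lem:critical_C_alpha}, the former isolating where the symmetry hypothesis is needed (to control the tails via \eqref{eq:integrable_decay_kernel}) and the latter supplying the weighted Hölder difference estimate in the sharp form demanded by the $\min\{|x|^\alpha,|x'|^\alpha\}$ weight in \eqref{eq:equiv}.
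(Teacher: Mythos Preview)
Your proof is correct and follows essentially the same approach as the paper: combine Lemma \ref{lem:critical_L_infty} for the $L^\infty$ part with Lemma \ref{lem:critical_C_alpha} for the weighted H\"older seminorm. Your version is in fact slightly cleaner, since by passing through the equivalent norm \eqref{eq:equiv} of Lemma \ref{lem:equiv} you avoid the case split $|x-x'|\le |x|$ versus $|x-x'|>|x|$ that the paper performs when working directly with the definition $\V |x|^\alpha Tg\V_{C^\alpha_*}$.
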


\begin{proof}
	This is a direct consequence of the above two lemmas. Indeed, for $0<|x| \le |x'|$, take the difference \begin{equation*}
	\begin{split}
	|x|^\alpha Tg(x) - |x'|^\alpha Tg(x') = \left(|x|^\alpha - |x'|^\alpha \right) Tg(x') - |x|^\alpha \left( Tg(x) - Tg(x') \right)~.
	\end{split}
	\end{equation*} The first term is under control using the previous $L^\infty$ bound. Then, from the latter lemma we obtain that \begin{equation*}
	\begin{split}
	|x|^\alpha \frac{| Tg(x) - Tg(x')|}{|x-x'|^\alpha}  \le C_\alpha \left(1 + \frac{|x-x'|^{1-\alpha}}{|x|^{1-\alpha}} \right)~,
	\end{split} 
	\end{equation*} and if $|x - x'| \le |x|$, we are done, and otherwise we again use the $L^\infty$ bound to finish the proof. 
\end{proof}

\begin{remark}
	In the usual formulation of the $2D$ Euler equation, we have \begin{equation*}
	\begin{split}
	\pr_t u + u\cdot\nabla u + \nabla p = 0, \qquad \nabla\cdot u = 0. 
	\end{split}
	\end{equation*} Taking the divergence of both sides, one obtains $\Delta p = 2 \pr_1 u_1 \pr_2u_2 - 2\pr_1 u_2\pr_2u_1$. Under the assumptions of Theorem \ref{thm:smooth_away_from_0}, it is not difficult to uniquely determine the pressure $p = p(u)$ at each time moment by requiring \begin{itemize}
		\item $p$ is $m$-fold symmetric around the origin,
		\item $|p(x)| \le C|x|^2$, and
		\item $\nabla^2 p \in \mathring{C}^\alpha(\mathbb{R}^2)$. 
	\end{itemize} 
\end{remark}

\section{Radially homogeneous solutions to the $2D$ Euler}\label{sec:homog}

In this section, we investigate the evolution of the class of radially homogeneous solutions to the $2D$ Euler equation. More concretely, we are interested in solutions which satisfy  \begin{equation}\label{eq:homogeneity}
\begin{split}
\omega(r,\theta) = \omega(1,\theta)~,\qquad u(r,\theta) = r u(1,\theta)~, \qquad \Psi(r,\theta) = r^2 \Psi(1,\theta)
\end{split}
\end{equation} for all time, again with some $m$-fold rotational symmetry where $m$ is greater than or equal to 3. A simple scaling analysis shows that the above factors of $r$ are the only possible set of degrees of homogeneity which may be propagated by Euler. Indeed, observe that if the velocity is radially homogeneous with degree 1, it maps any line through the origin to another such line, and this keeps the degree 0 homogeneity assumption for the vorticity. In turn, this ensures that the velocity remains homogeneous with degree 1, explicitly by the Biot-Savart law. 

Therefore, one sees that the $2D$ Euler equation reduces to a $1D$ dynamical system defined on the unit circle, which is not volume (length) preserving in general, and hence non-trivial. This $1D$ system is easily shown to be well-posed (in $L^\infty$ and $C^\alpha$ for instance), and the resulting solution provides the unique solution to Euler via \eqref{eq:homogeneity}.

A crucial property of this model is that the corresponding \textit{angular} velocity is smoother than the advected scalar by degree 2, which is striking in view of the relation $\nabla \times u = \omega$. Indeed, when we impose the radial homogeneity assumption on $2D$ Euler, the velocity vector field decomposes into the angular part and the radial part, and the latter is indeed only one degree smoother than the vorticity, but of course the $1D$ model is not affected by the radial velocity at all; see the expression in \eqref{eq:vv}. 

In the following, we first introduce the system and collect a few simple a priori bounds. Then we proceed to check that its solution actually gives a radially homogeneous solution to the $2D$ Euler equation.

\subsection{The $1D$ system}

Consider the following transport system defined on the circle $S^1 =\{ \theta : -\pi \le \theta < \pi \}$: \begin{equation}\label{eq:1D}
	\begin{split} 
		\left\{  
		\begin{array}{rl}
			\pr_t h(t,\theta) +  2 H(t,\theta) \partial_\theta h(t,\theta) &= 0~, \\
			h(0,\theta) &= h_0(\theta)~,
		\end{array}
		\right.
	\end{split}
\end{equation} with $h_0 \in L^\infty(S^1)$ and $m$-fold rotationally symmetric for some $m \ge 3$. Here, $H(t,\cdot)$ is the unique solution of \begin{equation}\label{eq:inverse_Laplacian}
\begin{split}
h(t,\theta) = 4H(t,\theta) + H''(t,\theta)~, \qquad \frac{1}{2\pi
}\int_{-\pi}^{\pi} H(\theta) \exp(\pm 2i\theta)d\theta =  0~.
\end{split}
\end{equation}  Hence $H$ has the same rotational symmetry as $h$, and therefore a solution to \eqref{eq:1D} stays $m$-fold rotationally symmetric for all $t > 0$. 

Here and in the following, given an integrable function on $S^1 = [-\pi,\pi)$, we use the convention that its Fourier coefficients are given by the formula \begin{equation*}
\begin{split}
\hat{f}_k = \frac{1}{2\pi} \int_{-\pi}^{\pi} f(\theta) \exp(ik\theta)d\theta~.
\end{split}
\end{equation*}

Before we state and prove a few a priori inequalities, let us demonstrate that the transformation in \eqref{eq:inverse_Laplacian} which sends $h$ to $H$ is given by a simple and explicit kernel. 

\begin{lemma}[Biot-Savart law]
	We have \begin{equation}\label{eq:Biot_Savart_1D}
	\begin{split}
	H(\theta) = \frac{1}{2\pi} \int_{-\pi}^{\pi} K_{S^1}(\theta-\bar{\theta}) h(\bar{\theta}) d\theta~,
	\end{split}
	\end{equation} where \begin{equation}\label{eq:Biot_Savart_kernel_1D}
	\begin{split}
	K_{S^1}(\theta) := \frac{\pi}{2} \sin(2\theta)  \frac{\theta}{|\theta|} - \frac{1}{2} \sin(2\theta)\theta - \frac{1}{8}\cos(2\theta)~.
	\end{split}
	\end{equation}
\end{lemma}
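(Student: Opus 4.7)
The plan is to verify the formula by showing that $K_{S^1}$ is a Green function for the operator $L := 4 + \partial_\theta^2$ modulo the two-dimensional kernel $\mathrm{span}\{\cos(2\theta), \sin(2\theta)\}$. In Fourier variables, $\widehat{LH}_k = (4-k^2)\hat H_k$, so the equation $LH = h$ requires the compatibility condition $\hat h_{\pm 2}=0$. This is automatic here: since $h$ is $m$-fold symmetric with $m \ge 3$, its Fourier support is contained in $m\mathbb{Z}$, and $\pm 2 \notin m\mathbb{Z}$.

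The main computation is to verify, as distributions on $S^1$, the identity
\begin{equation*}
(4 + \partial_\theta^2) K_{S^1}(\theta) \;=\; 2\pi\,\delta_0 \;-\; 2\cos(2\theta).
\end{equation*}
I would treat each of the three summands of $K_{S^1}$ separately. For $f_1(\theta) := \sin(2\theta)\,\mathrm{sgn}(\theta)$: on $(0,\pi)$ and $(-\pi,0)$ it agrees with $\pm\sin(2\theta)$, hence $Lf_1 = 0$ classically; however $f_1'$ jumps by $+4$ at $\theta = 0$ and by $-4$ at $\theta=\pi$ (on the circle), producing the distributional contribution $4\delta_0 - 4\delta_\pi$. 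For $f_2(\theta) := \sin(2\theta)\,\theta$ a direct computation gives $Lf_2 = 4\cos(2\theta)$ as a smooth function on $(-\pi,\pi)$, but the coordinate $\theta$ is not single-valued on $S^1$, so $f_2'$ jumps by $-4\pi$ across $\theta=\pi$, contributing $-4\pi\,\delta_\pi$. Finally $L[\cos(2\theta)] = 0$. Adding $\tfrac{\pi}{2}Lf_1 - \tfrac12 Lf_2 - \tfrac18 L\cos(2\theta)$, the two $\delta_\pi$ contributions cancel precisely, leaving $2\pi\delta_0 - 2\cos(2\theta)$.

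From this identity, convolving against $\tfrac{1}{2\pi} h(\bar\theta)\,d\bar\theta$ and applying $L$ yields
\begin{equation*}
LH(\theta) = h(\theta) - \frac{\cos(2\theta)}{\pi}\!\int_{-\pi}^{\pi}\!\cos(2\bar\theta)h(\bar\theta)d\bar\theta - \frac{\sin(2\theta)}{\pi}\!\int_{-\pi}^{\pi}\!\sin(2\bar\theta)h(\bar\theta)d\bar\theta.
\end{equation*}
The last two integrals are (up to constants) $\hat h_{\pm 2}$, which vanish by the $m$-fold symmetry, so $LH = h$ as desired. The normalization $\hat H_{\pm 2}=0$ is also automatic: convolution gives $\hat H_k = \hat{K}_{S^1,k}\,\hat h_k$, which is zero at $k=\pm 2$ since $\hat h_{\pm 2}=0$.

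The only delicate step is the distributional bookkeeping at $\theta = \pi$, where both $\sin(2\theta)\,\mathrm{sgn}(\theta)$ and $\sin(2\theta)\,\theta$ contribute jump derivatives; the exact coefficients $\pi/2$ and $-1/2$ in $K_{S^1}$ are engineered so that these extra $\delta_\pi$ terms cancel. Once that cancellation is verified, the rest is a direct application of the Green-function identity together with the symmetry hypothesis on $h$.
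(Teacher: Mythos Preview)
Your proof is correct and complete. The distributional computation of $(4+\partial_\theta^2)K_{S^1}$ is right: the jump contributions at $\theta=0$ and $\theta=\pi$ are exactly as you describe, and the cancellation of the $\delta_\pi$ terms is the key point, which you identify clearly.

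The paper takes a different route: rather than verifying the Green-function identity distributionally, it computes the Fourier coefficients of $K_{S^1}$ directly and checks that $\widehat{K_{S^1}}(k)=1/(4-k^2)$ for $|k|\ne 2$ and $=0$ for $|k|=2$. The computation is organized via the partial-fraction decomposition $\frac{1}{4-k^2}=\frac14\bigl(\frac{1}{2-k}+\frac{1}{2+k}\bigr)$ together with the known Fourier series of the sawtooth $\pi\,\mathrm{sgn}(\theta)-\theta$, whose $k$-th coefficient is $1/(ik)$; modulating by $e^{\pm 2i\theta}$ then produces the $\sin(2\theta)\,\mathrm{sgn}(\theta)$ and $\sin(2\theta)\,\theta$ pieces. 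So the paper's argument is synthetic (it explains how one arrives at the formula for $K_{S^1}$), whereas yours is a direct verification once the formula is given. Your approach has the advantage of making the distributional structure of the kernel completely explicit, in particular the role of the coefficients $\pi/2$ and $-1/2$ in killing the spurious $\delta_\pi$; the paper's approach has the advantage of also yielding the extra information $\widehat{K_{S^1}}(\pm 2)=0$, which you correctly note is not needed for the lemma since $\hat h_{\pm 2}=0$.
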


\begin{proof}
	It suffices to check that the function $K_{S^1}$ has Fourier coefficients $1/(4-k^2)$ for $|k| \ne 2$, and 0 when $|k| = 2$. This is a simple computation. Indeed, one can easily arrive at this expression by observing that \begin{equation*}
	\begin{split}
	\frac{1}{4-k^2} = \frac{1}{4} \left( \frac{1}{2-k} + \frac{1}{2+k} \right)
	\end{split}
	\end{equation*} and that $\pi \mbox{sign}(\theta) - \theta$ has Fourier coefficients $1/(ik)$. 
\end{proof}

\begin{figure}
	\includegraphics[height=60mm]{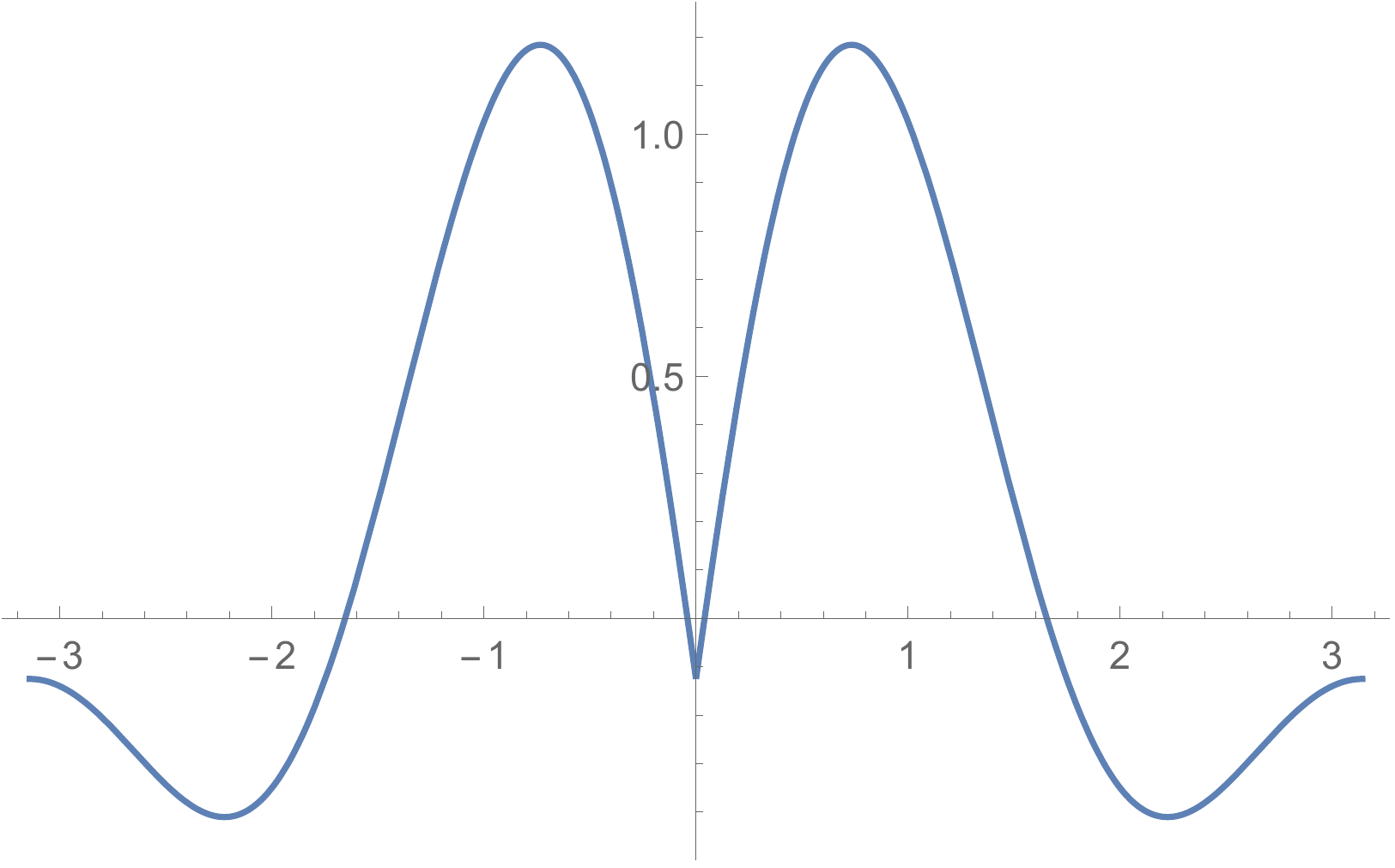}
	\centering
	\caption{The graph of $K_{S^1}$}
	\label{fig:kernel}
\end{figure}

\begin{remark}
	The expression \eqref{eq:Biot_Savart_1D} is truly the Biot-Savart law for the $2D$ Euler equation restricted to radially homogeneous data. Actually, it is possible to derive the expression \eqref{eq:Biot_Savart_kernel_1D} by starting with the usual Biot-Savart law (where $x \in \mathbb{R}^2$ corresponds to $(r,\theta)$): \begin{equation*}
	\begin{split}
	H({\theta}) = \frac{1}{2} u(x) \cdot \frac{1}{r} \begin{pmatrix}
	-\sin{\theta}  \\ \cos{\theta}
	\end{pmatrix} = \frac{1}{4\pi} p.v. \, \frac{1}{r} \int_{ \mathbb{R}^2} \frac{(x-y)^\perp}{|x-y|^2} \omega(y)dy \cdot \begin{pmatrix}
	-\sin{\theta}  \\ \cos{\theta}
	\end{pmatrix}~, 
	\end{split}
	\end{equation*} re-writing in polar coordinates, and then explicitly integrating out the radial variable.
\end{remark}

\begin{lemma}[a priori inequalities]
	Assume that $h(t,\theta)$ is a smooth solution to \eqref{eq:1D}. For each fixed time, we have \begin{equation*}
	\begin{split}
	\Vert H \Vert_{W^{n+2,\infty}} \le C_n \V h \V_{W^{n,\infty}}~,\qquad \Vert H \Vert_{W^{n+1,\infty}} \le C_n \V h \V_{W^{n,1}}~.
	\end{split}
	\end{equation*} Next, \begin{equation*}
	\begin{split}
	\frac{d}{dt}\V h(t)\V_{L^\infty} =  0~, \qquad \left|\frac{d}{dt}\V h(t)\V_{L^1} \right| \le  C\V h_0\V_{L^\infty} \V h(t)\V_{L^1} ~,\qquad \left|\frac{d}{dt} \V h(t)\V_{L^1}\right| \le c \V h(t)\V_{L^1}^2~.
	\end{split}
	\end{equation*} Moreover, if initially the data is nonnegative, then $\V h(t) \V_{L^1} = \V h_0\V_{L^1}$. Lastly, the gradient may grow at most exponentially in time; \begin{equation*}
	\begin{split}
	\left| \frac{d}{dt} \V\partial_\theta h(t)\V_{L^\infty} \right| \le C \V h_0\V_{L^\infty}  \V\partial_\theta h(t)\V_{L^\infty}~.
	\end{split}
	\end{equation*}
\end{lemma}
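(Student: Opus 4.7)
\medskip

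\textbf{Plan.} The six assertions split naturally into (a) elliptic bounds for the Biot--Savart map $h \mapsto H$, and (b) transport identities for $h$.

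For (a), the starting observation is that the Biot--Savart kernel $K_{S^1}$ displayed in \eqref{eq:Biot_Savart_kernel_1D} lies in $L^\infty(S^1)$ (it is continuous, with only a mild corner at $\theta=0$), so Young's inequality gives $\V H\V_{L^\infty} \le \V K_{S^1}\V_{L^\infty} \V h\V_{L^1}$. Differentiating $K_{S^1}$ once, the apparent delta contribution from $\partial_\theta \operatorname{sign}(\theta)$ is killed by the vanishing of $\sin(2\theta)$ at $0$, so $K_{S^1}' \in L^\infty$ as well, yielding $\V H'\V_{L^\infty} \le C\V h\V_{L^1}$. For the second estimates, I would use the PDE itself: $H'' = h - 4H$, which upgrades $L^1 \hookrightarrow L^\infty$ control on $H,H'$ and the $L^\infty$ datum on $h$ to $\V H\V_{W^{2,\infty}} \le C\V h\V_{L^\infty}$, and inductively (differentiating the equation $n$ times) to $\V H\V_{W^{n+2,\infty}} \le C_n \V h\V_{W^{n,\infty}}$ and $\V H\V_{W^{n+1,\infty}} \le C_n \V h\V_{W^{n,1}}$.

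For (b), the $L^\infty$ conservation is immediate from the Lagrangian representation $h(t,\theta) = h_0(\phi_t^{-1}(\theta))$, where $\phi_t$ is the flow of the (smooth) vector field $2H$. For the $L^1$ bounds, multiply the equation by $\operatorname{sign}(h)$ and integrate:
\begin{equation*}
\frac{d}{dt}\V h(t)\V_{L^1} = -\int_{S^1} 2H \,\partial_\theta |h|\, d\theta = 2\int_{S^1} H' |h|\, d\theta,
\end{equation*}
which gives $|\tfrac{d}{dt}\V h\V_{L^1}| \le 2\V H'\V_{L^\infty} \V h\V_{L^1}$. Applying the two elliptic estimates of (a) yields the two inequalities in the statement, one linear in $\V h\V_{L^1}$ (using $\V H'\V_{L^\infty}\lesssim \V h\V_{L^\infty}=\V h_0\V_{L^\infty}$) and one quadratic (using $\V H'\V_{L^\infty}\lesssim \V h\V_{L^1}$).

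For the conservation assertion when $h_0 \ge 0$, the transport representation gives $h(t,\cdot) \ge 0$, so $\V h(t)\V_{L^1} = \int h(t,\theta)\,d\theta$. I would show this quantity is conserved by computing
\begin{equation*}
\frac{d}{dt}\int_{S^1} h\, d\theta = 2\int_{S^1} H' h\, d\theta = 2\int_{S^1} H'(4H + H'')\,d\theta = 4\bigl[H^2\bigr]_{-\pi}^{\pi} + \bigl[(H')^2\bigr]_{-\pi}^{\pi} = 0
\end{equation*}
by the periodicity of $H$. Finally, for the gradient bound, differentiating \eqref{eq:1D} in $\theta$ produces $\partial_t(\partial_\theta h) + 2H \partial_\theta(\partial_\theta h) = -2H' \partial_\theta h$, which is a transport equation for $\partial_\theta h$ with a damping/amplifying term. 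Evaluating at the spatial maximum (or propagating along characteristics) gives $|\tfrac{d}{dt}\V \partial_\theta h\V_{L^\infty}| \le 2\V H'\V_{L^\infty}\V \partial_\theta h\V_{L^\infty} \le C\V h_0\V_{L^\infty}\V \partial_\theta h\V_{L^\infty}$, as desired. There is no serious obstacle; the only subtle point is verifying that the kernel $K_{S^1}$ and its first derivative are genuinely bounded (so that the "two derivative gain" in \eqref{eq:inverse_Laplacian} can be read off from the Biot--Savart representation rather than a Sobolev embedding argument).
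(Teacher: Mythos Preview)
Your proposal is correct and follows essentially the same route as the paper: the elliptic bounds come from the boundedness of $K_{S^1}$ and $K_{S^1}'$ (the paper phrases this as $\V H'\V_{L^\infty}\le \pi\V K_{S^1}\V_{\mathrm{Lip}}\V h\V_{L^1}$) together with the relation $H''=h-4H$, and the transport identities are obtained exactly as you describe---multiplying by $\operatorname{sign}(h)$ and integrating for the $L^1$ estimates, the $H'(4H+H'')$ total-derivative computation for conservation of $\int h$, and the differentiated equation $(\partial_t+2H\partial_\theta)h'=-2H'h'$ for the gradient bound. Your remark that the $\delta$ contribution in $\partial_\theta K_{S^1}$ is killed by the vanishing of $\sin(2\theta)$ at $0$ is a nice explicit justification of the Lipschitz bound that the paper uses without comment.
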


\begin{proof}
	We check the first statement for $n = 0$. The case $n > 0$ can be treated similarly. From \eqref{eq:Biot_Savart_1D}, it is clear that \begin{equation*}
	\begin{split}
	\V H'\V_{L^\infty} \le \pi \V K_{S^1}\V_{Lip}  \V h\V_{L^1}~. 
	\end{split}
	\end{equation*} Since we have trivially $\V H \V_{L^\infty} \le C\V h\V_{L^1} \le C'\V h\V_{L^\infty}$, \begin{equation*}
	\begin{split}
	\V H''\V_{L^\infty} \le \V h \V_{L^\infty} + 4 \V H\V_{L^\infty} \le C \V h \V_{L^\infty}~.
	\end{split}
	\end{equation*}
	
	We define the flow map on $S^1$, by solving \begin{equation*}
	\begin{split}
	\frac{d}{dt} \phi(t,\theta) =  2H(t, \phi(t,\theta) ) ~, \qquad \phi(0,\theta) = \theta~ . 
	\end{split}
	\end{equation*} We therefore have \begin{equation*}
	\begin{split}
	\frac{d}{dt} h(t,\phi(t,\theta)) = 0 ~,
	\end{split}
	\end{equation*} and the $L^\infty$ norm of $h$ is conserved in time. To derive statements regarding its $L^1$ norm, we multiply both sides of \eqref{eq:1D} by $\mbox{sign}(h)$ and integrate in space to obtain \begin{equation*}
	\begin{split}
	\frac{d}{dt} \int_{S^1} |h(t,\theta)| d\theta = - \int_{S^1} 2H \partial_\theta |h| d\theta =  \int_{S^1} 2H' |h| d\theta~,
	\end{split}
	\end{equation*} and the inequalities follow from our previous bounds. If $h_0 \ge 0$, then non-negativity is preserved by the flow, and integrating \eqref{eq:1D} in space gives that \begin{equation*}
	\begin{split}
	\frac{d}{dt}\int_{S^1}h(t,\theta)d\theta = \int_{S^1} 2H' h d\theta = \int_{S^1}  8H'H + 2 H'H'' = \int_{S^1} 4 (H^2)' - \left( (H')^2 \right)' d\theta= 0~,
	\end{split}
	\end{equation*} which shows that the $L^1$ norm is conserved. This also states that we may assume that the mean of $h_0$ is zero, without loss of generality. The dynamics in the general case may be recovered by means of the transformation \eqref{eq:rotating_solution}. 
	
	Regarding the last statement, by differentiating \eqref{eq:1D}, we obtain \begin{equation}\label{eq:gradient}
	\begin{split}
	\left(\partial_t + 2H(\theta)\partial_\theta\right) h'(\theta) = -2H'(\theta)h'(\theta)~.
	\end{split}
	\end{equation} Composing with the flow map and taking the $L^\infty$-norm of both sides gives \begin{equation*}
	\begin{split}
	\frac{d}{dt} \Vert h'(t,\cdot)\Vert_{L^\infty} \le 2 \Vert H'(t,\cdot)\Vert_{L^\infty}  \Vert h'(t,\cdot)\Vert_{L^\infty}~.
	\end{split}
	\end{equation*} 
	
	This finishes the proof.
\end{proof}

\begin{remark}
	The flow homeomorphisms $\Phi(t,\cdot): \mathbb{R}^2 \rightarrow \mathbb{R}^2$ for all $t\ge 0$ are well-defined as bi-Lipschitz maps in this setting. Note also that the $1D$ flow $\phi(t,\cdot)$ is ``volume preserving'' if and only if $0 = H'(\theta)$, and hence the $L^p$-norms for $h$ will not be conserved for $p < \infty$ in general.
\end{remark}

\begin{proposition}
	The system \eqref{eq:1D} is well-posed in $h \in L^\infty([0,\infty),L^\infty)$ and in $L^\infty([0,\infty),C^{k,\alpha})$ for any $k \ge 0$ and $0 \le \alpha \le 1$. The solution gives the unique solution to $2D$ Euler by setting \begin{equation}\label{eq:vv}
	\begin{split}
	\omega(t,r,\theta) &= h(t,\theta)~, \\
	u(t,r,\theta) &= 2H(t,\theta) \begin{pmatrix}
	-r\sin\theta \\ r\cos\theta
	\end{pmatrix} - H'(t,\theta) \begin{pmatrix}
	r\cos\theta \\ r\sin\theta
	\end{pmatrix}~, \\
	\Psi(t,r,\theta) &= r^2 H(\theta)~.
	\end{split}\end{equation}
\end{proposition}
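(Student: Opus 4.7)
The strategy has three stages: establish well-posedness of the $1D$ transport system \eqref{eq:1D}, propagate angular regularity, and verify that the ansatz \eqref{eq:vv} yields a bona fide solution to the $2D$ Euler equation which must be unique by Theorem \ref{thm:main}.

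For the $1D$ well-posedness, the key observation is already encoded in \eqref{eq:inverse_Laplacian}: since $4H+H''=h$ and the orthogonality against $e^{\pm 2i\theta}$ fixes $H$ uniquely, the map $h\mapsto H$ gains two derivatives, so $\V H\V_{W^{2,\infty}(S^1)}\le C\V h\V_{L^\infty(S^1)}$ and analogously $\V H\V_{C^{k+2,\alpha}(S^1)}\le C\V h\V_{C^{k,\alpha}(S^1)}$ for all $k\ge 0$, $0\le\alpha\le 1$. In particular the transport velocity $2H$ is always Lipschitz on $S^1$, in sharp contrast with the merely log-Lipschitz situation for Yudovich solutions to $2D$ Euler. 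I would use a standard Picard iteration: set $h^{(0)}\equiv h_0$ and define iteratively $H^{(n)}$ from $h^{(n)}$ via \eqref{eq:Biot_Savart_1D}, the flow $\phi^{(n)}$ by solving $\frac{d}{dt}\phi^{(n)}=2H^{(n)}(\phi^{(n)})$ with $\phi^{(n)}(0,\theta)=\theta$, and $h^{(n+1)}(t,\theta):=h_0((\phi^{(n)}_t)^{-1}(\theta))$. Composition with a homeomorphism preserves the $L^\infty$ norm, so $\V h^{(n)}(t)\V_{L^\infty}=\V h_0\V_{L^\infty}$ for all $n$ and $t$, which yields a \emph{uniform} Lipschitz bound on $2H^{(n)}$ by $C\V h_0\V_{L^\infty}$. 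Convergence of the iterates in $C([0,T];L^\infty)$ then follows from a Gronwall estimate on $\V\phi_t^{(n)}-\phi_t^{(n-1)}\V_{L^\infty}$; uniqueness of the limit is proved by the same argument applied to the difference of two solutions.

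Higher regularity is inherited from the transport structure: if $h_0\in C^{k,\alpha}$, the two-derivative smoothing gives $2H(t,\cdot)\in C^{k+2,\alpha}$, hence $\phi(t,\cdot)$ is a $C^{k+1,\alpha}$ diffeomorphism of $S^1$, and $h(t,\theta)=h_0(\phi_t^{-1}(\theta))\in C^{k,\alpha}$. Quantitative growth bounds for $\V h(t)\V_{C^{k,\alpha}}$ are closed via Gronwall after differentiating \eqref{eq:1D} $k$ times, exactly as in the proof of \eqref{eq:gradient} for the case $k=1$.

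For the bridge to $2D$ Euler, define $(\omega,u,\Psi)$ via \eqref{eq:vv}. In polar coordinates, $u_r=-rH'(\theta)$ and $u_\theta=2rH(\theta)$, so a direct computation gives $\n\cdot u=\frac{1}{r}\pr_r(ru_r)+\frac{1}{r}\pr_\theta u_\theta=-2H'(\theta)+2H'(\theta)=0$ and $\n\times u=\frac{1}{r}\pr_r(ru_\theta)-\frac{1}{r}\pr_\theta u_r=4H(\theta)+H''(\theta)=h=\omega$. Since $\omega$ depends only on $\theta$, the transport term collapses to $u\cdot\n\omega=(u_\theta/r)\,\pr_\theta h=2H\,\pr_\theta h$, so $\pr_t\omega+u\cdot\n\omega=0$ is precisely \eqref{eq:1D}. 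The resulting $\omega$ is bounded, $m$-fold symmetric, and the explicit formula \eqref{eq:vv} gives $|u(x)|\le C|x|$ with $C$ proportional to $\V h_0\V_{L^\infty}$; therefore Theorem \ref{thm:main} applies and forces this to be \emph{the} unique solution of $2D$ Euler in its class. The main subtlety, in my view, is not any single estimate but rather the bookkeeping in the Picard argument: it is the two-derivative smoothing from $K_{S^1}$ that saves us, converting what would have been a delicate Osgood-type argument (as in Lemma \ref{lem:unique}) into an ordinary Gronwall estimate on a Lipschitz velocity field.
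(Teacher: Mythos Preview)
Your proposal is correct and follows essentially the same route as the paper's own proof: both argue that the two-derivative smoothing in $h\mapsto H$ makes the transport velocity Lipschitz, so a standard iteration scheme (rather than an Osgood-type argument) gives global $L^\infty$ well-posedness, after which $C^{k,\alpha}$ regularity follows directly from the transport structure; the ansatz \eqref{eq:vv} is then checked by the same direct computation of $\nabla\cdot u$ and $\nabla\times u$, and uniqueness is inherited from Theorem \ref{thm:main}. Your write-up is more explicit than the paper's (which is quite terse), but there is no substantive difference in strategy.
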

\begin{proof}
	It is not difficult to show that the system is globally well-posed, first in the case $h_0 \in L^\infty$. Indeed, $h \in L^\infty$ guarantees that $H$ is Lipschitz (indeed, even $H'$ is Lipschitz), which allows us to solve for the flow map. A standard iteration scheme will give existence. The resulting solution is global thanks to the conservation of $h$ in $L^\infty$. Uniqueness can be shown along similar lines. Given the $L^\infty$ well-posedness, the corresponding statement in $C^{k,\alpha}$ is direct to verify.
	
	We now check that it provides a solution to Euler. Indeed, from \eqref{eq:vv}, we see that the equation \begin{equation*}
	\begin{split}
	\partial_t \omega + (u\cdot\nabla)\omega = 0~
	\end{split}
	\end{equation*} simply reduces to \eqref{eq:1D}. Moreover, a direct computation gives \begin{equation}\label{eq:curl_v}
	\begin{split}
	\nabla \times u = 4H + H'' = \omega~, \qquad \nabla \cdot u = 0~, \qquad u = \nabla^\perp \Psi~.
	\end{split}
	\end{equation} This establishes the statement, as Theorem \ref{thm:main} states that the $2D$ Euler solution with such a data is unique. 
\end{proof}

\begin{example}
	We begin by noting that the examples in Section \ref{sec:EU} defines either a stationary or a purely rotating solution to the system \eqref{eq:1D}. 
	
	The conservation of $L^1$ for non-negative data gives more examples of rotating solutions. Take some interval of length $L$ less than $2\pi/m$, and place them $m$-fold rotationally symmetric around the circle. Then, these patch solutions simply rotates. In general, if we add more intervals, then they may ``exchange'' lengths.
\end{example}

\subsection{Trend to equilibrium under odd symmetry and positivity}\label{subsec:sp}

Let us now study the evolution of the $1D$ system \eqref{eq:1D} in more detail. For concreteness, we will assume from now on that the vorticity is 4-fold rotationally symmetric: \begin{equation*}
\begin{split}
h_0(\theta) = h_0(\theta + \pi/2) = h_0 (\theta + \pi) = h_0 (\theta + 3\pi/2)~.
\end{split}
\end{equation*} Therefore, we may view the solution as defined on $[-\pi/4,\pi/4]$ as a periodic function. It turns out that once we impose \textit{odd symmetry} and  \textit{positivity}, we can get a fairly satisfactory picture of the long-time dynamics. That is, we assume \begin{equation*}
\begin{split}
h_0(\theta) = -h_0(-\theta)~,\qquad h_0 \ge 0 \mbox{ on } [0,\pi/4]~.
\end{split}
\end{equation*} First, it is easy to check from \eqref{eq:1D} and \eqref{eq:inverse_Laplacian} that the odd symmetry is going to be preserved, and since the endpoints of the interval $[0,\pi/4]$ are fixed, the solution remains non-negative. 

One motivation for imposing odd symmetry as well as positivity is that this scenario is expected to exhibit the fastest possible rate of gradient growth. Indeed, a very important consequence of the above extra assumptions is that there is a \textit{sign} for the velocity as well. This drives all the fluid particles from $(-\pi/4,\pi/4)$ towards the fixed point $\theta^* = 0$, which stretches the gradient at that point. In more detail, for fast gradient growth, it is necessary to have a lower bound on the velocity gradient, and $H'$ measured in the maximum norm scales as the $L^1$-norm of $h$, and the effect of oscillations in the sign of $h$ would be to simply reduce the magnitude of $H'$.  

We show that for such initial data $h_0$ which is supported near the fixed point $\theta^* = 0$, the sup-norm of the gradient $h'(t,\cdot)$ grows without bound as time goes to infinity, while the solution itself converges to the rest state in all $L^p(S^1)$ with $p < \infty$. This excludes the exponential growth rate, which is the optimal possible one.  To be clear, it does not follow that the exponential growth rate is impossible for any initial data. 

\begin{figure}
	\includegraphics[height=40mm]{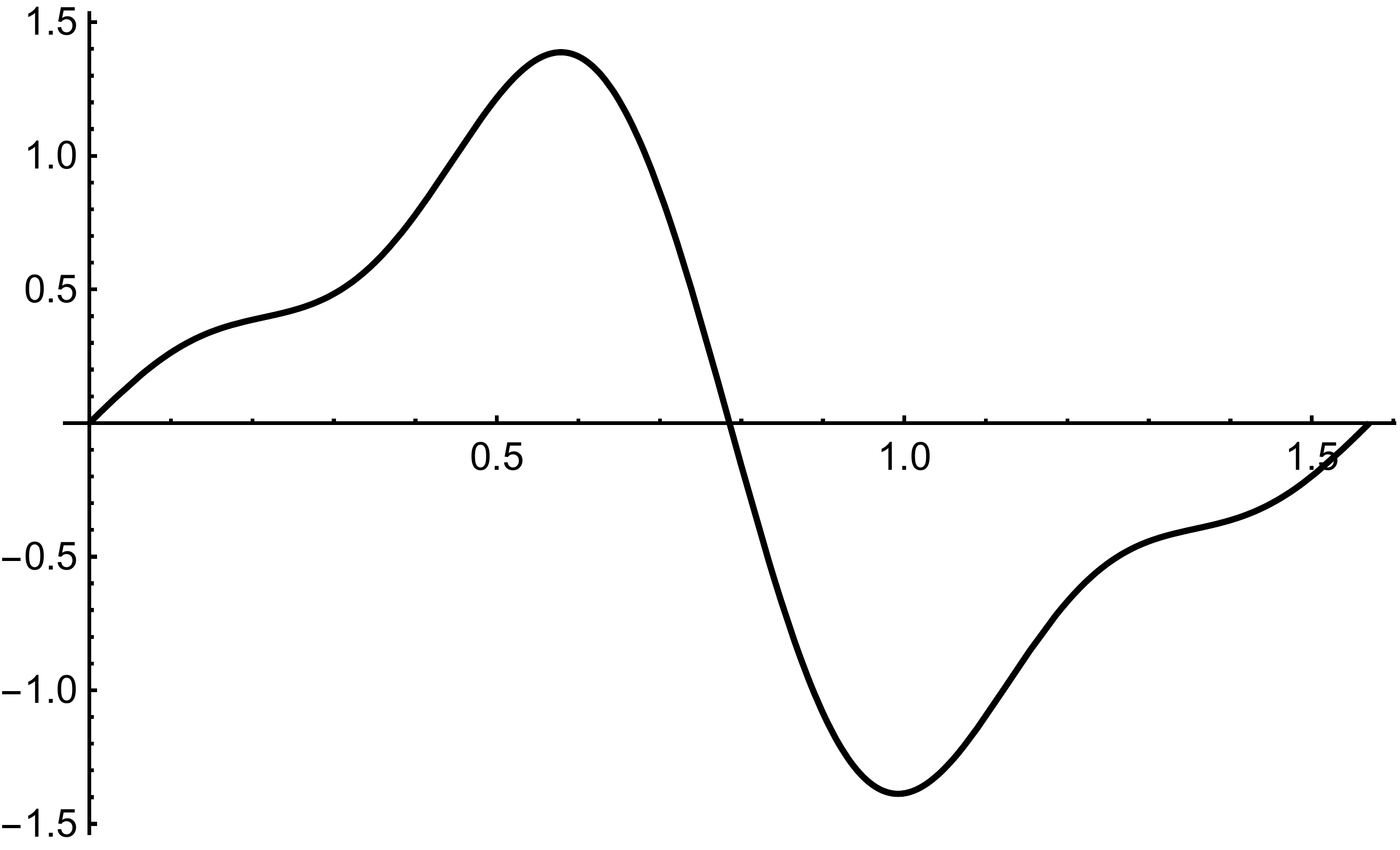}
	\includegraphics[height=40mm]{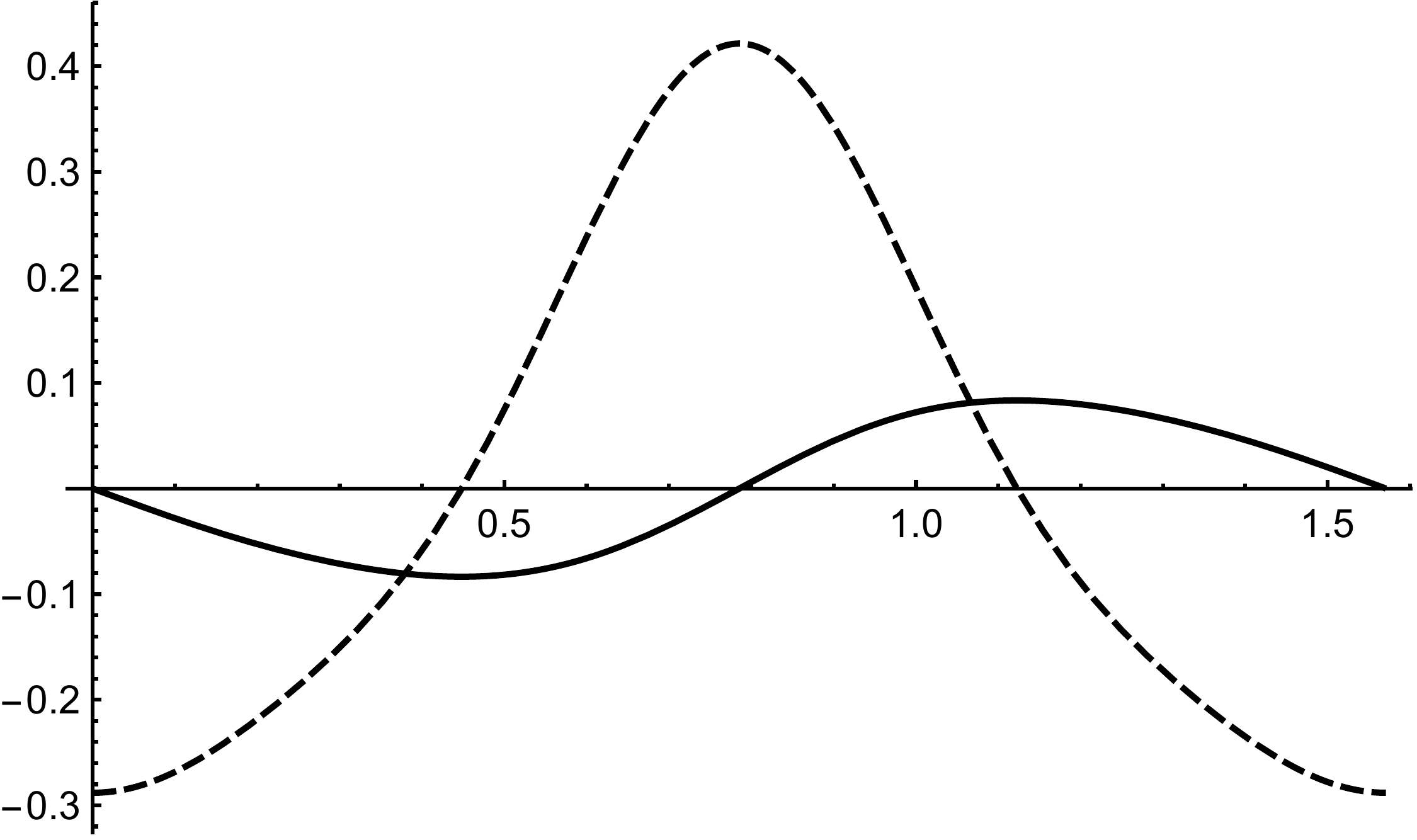}
	\centering
	\caption{Left: an example of ``odd and positive'' $h_0$, drawn on $[0,\pi/2]$. Right: associated graphs of $H_0$ (solid line) and $H_0'$ (dotted line), drawn on the same interval.}
	\label{fig:odd_positive}
\end{figure}

\begin{lemma}\label{lem:positivity}
	If $h$ is odd and non-negative on $[0,\pi/4]$, we have $H(t,\cdot) < 0$ and $H''(t,\cdot) > 0$ on $(0,\pi/4)$. In particular, $H'$ is strictly increasing on $(0,\pi/4)$ with $H'(0) < 0 < H'(\pi/4)$. 
\end{lemma}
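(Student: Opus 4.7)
The plan is to exploit the fact that under the hypotheses, $H$ inherits the oddness and $\pi/2$-periodicity of $h$, which forces Dirichlet boundary conditions $H(0) = H(\pi/4) = 0$; once this is established, the relation $4H + H'' = h$ becomes an ODE Dirichlet problem on $[0, \pi/4]$ whose solution can be read off from an explicit, sign-definite Green's function.

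First, I would verify that $H$ is odd and $\pi/2$-periodic. This follows either directly from \eqref{eq:Biot_Savart_1D} by noting that $K_{S^1}$ is an even function of its argument (each of its three pieces is manifestly even), so the convolution of $h$ with $K_{S^1}$ inherits the symmetries of $h$; or, more cleanly, since $h$ is odd and $4$-fold symmetric it has a pure sine expansion $h(\theta) = \sum_{n\ge 1} b_n \sin(4n\theta)$, so $H(\theta) = -\sum_{n\ge 1} \frac{b_n}{16n^2-4}\sin(4n\theta)$ after applying $(4+\pr_\theta^2)^{-1}$ term by term. In either case $H(0) = H(\pi/4) = 0$.

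Second, I would view $H$ on $[0,\pi/4]$ as the unique solution of the Dirichlet problem
\[
H''(\theta) + 4H(\theta) = h(\theta), \qquad H(0) = H(\pi/4) = 0.
\]
Using the fundamental solutions $\sin(2\theta)$ and $\cos(2\theta)$ (which vanish at $\theta=0$ and $\theta=\pi/4$ respectively), a short computation produces the Green's function
\[
G(\theta, s) = -\tfrac{1}{2}\begin{cases} \sin(2\theta)\cos(2s), & 0 \le \theta \le s, \\ \sin(2s)\cos(2\theta), & s \le \theta \le \pi/4. \end{cases}
\]
On the open square $(0,\pi/4)^2$ all four trigonometric factors are strictly positive, so $G(\theta,s) < 0$ throughout the interior. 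Assuming $h \not\equiv 0$ on $[0,\pi/4]$ (otherwise $H \equiv 0$ and the strict inequalities are vacuous), we conclude $H(\theta) = \int_0^{\pi/4} G(\theta,s) h(s)\,ds < 0$ for all $\theta \in (0, \pi/4)$.

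Feeding this back into the ODE, $H''(\theta) = h(\theta) - 4H(\theta) > 0$ on $(0,\pi/4)$, so $H'$ is strictly increasing there. Since $H$ is continuous, vanishes at both endpoints of $[0,\pi/4]$ and is strictly negative inside, Rolle's theorem furnishes a (necessarily unique) interior critical point of $H$ where $H'$ changes sign, and the strict monotonicity plus continuity of $H'$ then force $H'(0) < 0 < H'(\pi/4)$. The plan has no real obstacle; the only subtlety is the initial bookkeeping that transfers the symmetries of $h$ to $H$ and hence converts the non-local relation $4H+H''=h$ into a genuine boundary value problem on $[0,\pi/4]$.
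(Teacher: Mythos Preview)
Your proof is correct and reaches the same conclusion as the paper, but by a somewhat different and arguably cleaner route.

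The paper works from the explicit Biot--Savart kernel $K_{S^1}$: it first four-fold symmetrizes to obtain $K^1_{S^1}(\theta)=\tfrac{\pi}{8}|\sin 2\theta|$, then uses the oddness of $h$ to rewrite $H(\theta)$ as an integral over $[0,\pi/4]$ against the kernel $|\sin(2\theta-2\theta')|-|\sin(2\theta+2\theta')|$, which for $\theta\ge\theta'$ equals $-2\cos 2\theta\,\sin 2\theta'$ and is manifestly negative on the open square. You instead observe that the symmetries of $h$ pass to $H$, yielding the Dirichlet conditions $H(0)=H(\pi/4)=0$, and then read off the Green's function $G(\theta,s)=-\tfrac12\sin(2\theta_{<})\cos(2\theta_{>})$ for $H''+4H=h$ on $[0,\pi/4]$. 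The two kernels coincide (up to the harmless constant), so the sign argument is identical; what differs is that you never need the explicit formula \eqref{eq:Biot_Savart_kernel_1D} for $K_{S^1}$ or its symmetrization, only the ODE and the boundary data. This makes your argument more self-contained. Conversely, the paper's route has the advantage that the symmetrized kernel $\tfrac{\pi}{8}|\sin 2\theta|$ is reused elsewhere (e.g.\ in deriving \eqref{eq:Hgradformula}), so computing it once pays off downstream.

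One minor point: your Rolle-plus-monotonicity argument for $H'(0)<0<H'(\pi/4)$ is fine, but note that differentiating your Green's function at the endpoints gives directly $H'(0)=-\int_0^{\pi/4}\cos(2s)h(s)\,ds$ and $H'(\pi/4)=\int_0^{\pi/4}\sin(2s)h(s)\,ds$, which is both quicker and matches the formulas the paper uses immediately afterward in Theorem~\ref{thm:growth}.
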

\begin{proof}
	This immediately follows from a simple manipulation on the kernel \eqref{eq:Biot_Savart_kernel_1D}. First, using the 4-fold rotational symmetry, we may consider the symmetrized kernel \begin{equation*}
	\begin{split}
	K^1_{S^1} (\theta) =\frac{1}{4} \sum_{j=0}^3 K_{S^1}(\theta + j\pi/2) = \frac{\pi}{8} \left| \sin(2\theta)\right|~.
	\end{split}
	\end{equation*} Then, integrating  against an odd function $h$, \begin{equation*}
	\begin{split}
	H(\theta) = \frac{4}{2\pi}\int_{-\pi/4}^{\pi/4} K^1_{S^1}(\theta-\theta') h(\theta') d\theta = \frac{8}{2\pi}\int_{0}^{\pi/4} \frac{\pi}{8} \left( \left| \sin(2\theta- 2\theta')\right| - \left| \sin(2\theta + 2\theta')\right|  \right) h(\theta')d\theta'
	\end{split}
	\end{equation*} and then it is sufficient to note that the function $\left| \sin(2\theta- 2\theta')\right| - \left| \sin(2\theta + 2\theta')\right|$ is strictly negative on $(\theta,\theta') \in (0,\pi/4)^2$. Indeed, on $[0,\pi/4]$, $K(\theta,\theta') := \left| \sin(2\theta- 2\theta')\right| - \left| \sin(2\theta + 2\theta')\right|$ equals $-2\cos(2\theta)\sin(2\theta')$ for $\theta \ge \theta' $ and $K(\theta,\theta') = K(\theta',\theta)$. Hence, $H'' = h - 4H > 0$ on $(0,\pi/4)$. 
\end{proof}

As an immediate corollary, we have the following classification of stationary solutions: 

\begin{corollary}
	An odd, non-negative $h_0 \in L^\infty(S^1)$ defines a stationary solution of \eqref{eq:1D} if and only if it equals a constant on $(0,\pi/4)$. That is, there are no nontrivial such stationary solutions.
\end{corollary}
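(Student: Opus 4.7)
The claim has two directions. The easy one, $(\Leftarrow)$, is to note that any odd non-negative $h_0$ which equals a constant $c$ on $(0,\pi/4)$ is, after extension by odd symmetry and $4$-fold rotation, precisely the vorticity configuration $c\,\omega^{(4)}$ of Example \ref{ex:stationary} (restricted to angular profiles). Since that example was already shown to be stationary under the $2D$ Euler flow, the associated $h$ is stationary for \eqref{eq:1D}. So the real content is the converse direction.

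For $(\Rightarrow)$, the plan is to combine Lemma \ref{lem:positivity} with a trajectory argument for the flow map $\phi(t,\cdot)$ on the circle. Stationarity of $h$ means $h(t,\theta)=h_0(\theta)$ for every $t$, and since the Lagrangian formulation gives $h_0(\phi(t,\theta))=h_0(\theta)$, the function $h_0$ must be constant along every trajectory of the ODE $\dot\phi = 2H(\phi)$. Lemma \ref{lem:positivity} supplies exactly the rigidity we need: under odd symmetry and non-negativity on $[0,\pi/4]$, one has $H<0$ strictly throughout the open interval $(0,\pi/4)$, with $H(0)=H(\pi/4)=0$ (the latter from 4-fold symmetry plus oddness forcing $h(\pi/4)=0$ and hence $H$ vanishing at the fixed points). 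Consequently $\phi(t,\theta)$ is strictly decreasing in $t$ for each $\theta\in(0,\pi/4)$, while $0$ remains a fixed point with $\phi(t,\theta)\to 0$ exponentially as $t\to\infty$ (because $H'(0)<0$ gives $H(\theta)\sim H'(0)\theta$ near $0$).

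The main step is then the following. Fix $\theta_0\in(0,\pi/4)$ and let $t\mapsto \phi(t,\theta_0)$ trace its trajectory. By continuity of $\phi(\cdot,\theta_0)$ and the asymptotics above, this trajectory sweeps out the entire interval $(0,\theta_0]$ as $t$ ranges over $[0,\infty)$. Since $h_0$ is constant along it, we conclude that $h_0\equiv h_0(\theta_0)$ almost everywhere on $(0,\theta_0]$. Letting $\theta_0\uparrow\pi/4$ shows that $h_0$ is a.e.\ constant on $(0,\pi/4)$, which is the desired conclusion.

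The only technical point that requires care is the interpretation of ``stationary'' when $h_0$ is merely in $L^\infty$: one must justify $h_0\circ\phi(t,\cdot)=h_0$ a.e. This is not really an obstacle, because $H\in C^{1,1}$ by \eqref{eq:inverse_Laplacian} with $L^\infty$ right-hand side, so the flow $\phi(t,\cdot)$ is a bi-Lipschitz homeomorphism of $S^1$, and the well-posedness already established for \eqref{eq:1D} in $L^\infty$ guarantees the Lagrangian identity $h(t,\phi(t,\theta))=h_0(\theta)$ a.e.\ in the pointwise sense. Given that, the argument above goes through verbatim.
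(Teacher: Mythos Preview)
Your argument is essentially correct but takes a different route from the paper. The paper's proof is a one-liner at the distributional level: stationarity of $h_0$ is equivalent to $H_0\,h_0' = 0$ in $\mathcal{D}'(S^1)$, and since Lemma~\ref{lem:positivity} gives $H_0<0$ strictly on $(0,\pi/4)$ (with $H_0$ smooth there), one may divide through to conclude $h_0' = 0$ on $(0,\pi/4)$, hence $h_0$ is constant a.e.\ on that interval. Your approach instead runs the flow map and argues that $h_0$ must be constant along each trajectory, which sweeps out all of $(0,\theta_0]$ as $t\to\infty$. This is more geometric and perhaps more intuitive, but it is also longer and leaves a measure-theoretic point underexplained.

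Specifically, the Lagrangian identity $h_0(\phi(t,\theta)) = h_0(\theta)$ is only guaranteed a.e.\ in $\theta$ for each \emph{fixed} $t$, whereas your main step fixes $\theta_0$ and varies $t$. To pass from one to the other you need a Fubini argument: the set $\{(t,\theta): h_0(\phi_t(\theta))\neq h_0(\theta)\}$ has zero product measure, so for a.e.\ $\theta_0$ the identity holds for a.e.\ $t$, and since $t\mapsto\phi(t,\theta_0)$ is bi-Lipschitz onto each compact subinterval of $(0,\theta_0]$, null sets in $t$ map to null sets in $\theta$. This is routine, but ``the argument goes through verbatim'' oversells it. Also, your justification for $H(\pi/4)=0$ is slightly off: it follows directly from the odd symmetry of $H$ combined with $4$-fold periodicity (so $H(\pi/4)=H(-\pi/4)=-H(\pi/4)$), not from $h(\pi/4)=0$. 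The paper's distributional approach sidesteps all of this.
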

\begin{proof}
	The function $h_0 \in L^\infty(S^1) $ defines a stationary solution if and only if $H_0 h_0' = 0$ in the sense of distributions. Assuming that $h_0$ is not identically zero and non-negative on $(0,\pi/4)$, we have $H_0 < 0$ on $(0,\pi/4)$ by Lemma \ref{lem:positivity}, which means that $h_0$ equals a constant almost everywhere on this interval. 
\end{proof}

\begin{theorem}\label{thm:growth}
	Let $0 \ne h_0 \in W^{1,\infty}(S^1)$ be odd at 0 and non-negative on $[0,\pi/4]$. Assume further that the support of $h_0$ restricted to $[0,\pi/4]$ is contained in $[0,\epsilon]$ for some $0 < \epsilon \le 1/4$. Then we have the following bounds on the $L^1$-norm: \begin{equation}\label{eq:decay_L1_ep}
	\begin{split}
	\frac{1}{t  + \V h_0 \V_{L^1}^{-1} } \le \V h(t)\V_{L^1} \le \frac{1}{(1 - 8\epsilon^2)t + \V h_0 \V_{L^1}^{-1}},\quad \forall t \ge 0. 
	\end{split}
	\end{equation} In particular, the solution converges to the rest state in $L^1$ as $t \rightarrow +\infty$. On the other hand, the gradient grows almost quadratically for all times: \begin{equation}\label{eq:growth_gradient_ep}
	\begin{split}
	h'_0(0) \left(  \V h_0\V_{L^1} t + 1 \right)^{2(1-2\epsilon^2)} \le \V h'(t)\V_{L^\infty} \le \V h'_0\V_{L^\infty} \left( (1-8\epsilon^2)\V h_0\V_{L^1} t + 1 \right)^{2(1-8\epsilon^2)^{-1}}, \quad \forall t \ge 0,
	\end{split}
	\end{equation} assuming $h'_0(0) > 0$. 
\end{theorem}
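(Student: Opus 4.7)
The plan is to reduce the system to a scalar ODE for $M(t) := \int_0^{\pi/4} h(t,\theta)\, d\theta$, which by four-fold and odd symmetry is a fixed multiple of $\V h(t)\V_{L^1}$, and then to analyze the flow gradient at the fixed point $\theta = 0$ using the same circle of identities.

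First, I would establish the support confinement $\{\theta \in [0,\pi/4] : h(t,\theta) \ne 0\} \subset [0,\epsilon]$. By Lemma \ref{lem:positivity} together with the additional oddness of $H$ about $\pi/4$ (which follows from four-fold symmetry plus odd-at-$0$), both $0$ and $\pi/4$ are fixed points of $\dot\phi = 2H(\phi)$ and $H<0$ strictly between, so the flow pushes $[0,\epsilon]$ into itself. Next I would derive the ODE for $M$: using $\pr_t h = -2Hh'$, integrating by parts on $[0,\pi/4]$ with $H(0)=H(\pi/4)=h(\pi/4)=0$, and then invoking the algebraic identity
\begin{equation*}
H'h \;=\; H'\bigl(4H+H''\bigr) \;=\; \Bigl(2H^2 + \tfrac12(H')^2\Bigr)'~,
\end{equation*}
the integrand collapses to a total derivative, giving
\begin{equation*}
\frac{dM}{dt} \;=\; H'(\pi/4)^2 - H'(0)^2~.
\end{equation*}
The four-fold odd symmetrization of the kernel \eqref{eq:Biot_Savart_kernel_1D} reduces to $\tfrac{\pi}{8}|\sin(2\theta-2\theta')|$ on $[0,\pi/4]^2$ (the same computation as in the proof of Lemma \ref{lem:positivity}), from which
\begin{equation*}
H'(0) = -\!\int_0^{\pi/4}\!\!\cos(2\theta') h(\theta')\, d\theta'~, \qquad H'(\pi/4) = \int_0^{\pi/4}\!\!\sin(2\theta') h(\theta')\, d\theta'~.
\end{equation*}
The support condition and $h\ge 0$ give $H'(0)^2 \in [\cos^2(2\epsilon)M^2, M^2]$ and $H'(\pi/4)^2 \in [0, \sin^2(2\epsilon)M^2]$, so $-M^2 \le \frac{dM}{dt} \le -\cos(4\epsilon)\, M^2 \le -(1-8\epsilon^2)\, M^2$. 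Then $\frac{d}{dt}(1/M) \in [1-8\epsilon^2,\, 1]$ integrates directly to the claimed $L^1$ bounds \eqref{eq:decay_L1_ep}.

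For the gradient, I would differentiate $h(t,\phi(t,\theta)) = h_0(\theta)$ in $\theta$ and solve $\pr_t \phi_\theta = 2H'(\phi)\phi_\theta$. At the fixed point $\theta=0$,
\begin{equation*}
h'(t,0) \;=\; h_0'(0)\,\exp\!\biggl(\,2\!\int_0^t\!\!\int_0^{\pi/4}\!\!\cos(2\theta')\, h(s,\theta')\, d\theta'\, ds\,\biggr)~,
\end{equation*}
and the inner integral lies in $[\cos(2\epsilon)M(s), M(s)]$. The $L^1$ bounds above bracket $\int_0^t M(s)\, ds$ between $\log(1+M_0 t)$ and $(1-8\epsilon^2)^{-1}\log\bigl(1+(1-8\epsilon^2)M_0 t\bigr)$; exponentiating and using $\cos(2\epsilon) \ge 1 - 2\epsilon^2$ yields the lower bound in \eqref{eq:growth_gradient_ep} at the origin, hence for $\V h'(t)\V_{L^\infty}$. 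The matching upper bound uses $|H'(\theta)| \le M(s)$ uniformly in $\theta \in [0,\pi/4]$ (directly from the explicit formula for $H'$) to propagate the flow gradient: $\V h'(t)\V_{L^\infty} \le \V h_0'\V_{L^\infty}\, \exp\bigl(2\!\int_0^t M(s)\, ds\bigr)$, which gives the upper half of \eqref{eq:growth_gradient_ep}.

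The main obstacle is the cancellation that converts $\int_0^{\pi/4} H'h\, d\theta$ into a signed difference of boundary values of $(H')^2$: once this identity is observed, the remainder is a one-dimensional ODE comparison. The support hypothesis is exactly what separates $H'(0)^2$ from $H'(\pi/4)^2$ quantitatively and produces the $(1-8\epsilon^2)$ rate; without it the argument only gives $\dot M \le 0$ with no quantitative lower bound on the decay of $M$ or, correspondingly, the growth of $h'(t,0)$.
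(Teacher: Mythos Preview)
Your argument is correct and follows essentially the same route as the paper: you derive the identity $\frac{d}{dt}\int_0^{\pi/4} h\,d\theta = H'(\pi/4)^2 - H'(0)^2$ via the total-derivative trick $H'h = (2H^2 + \tfrac12(H')^2)'$, use the explicit boundary formulas $H'(0) = -\int_0^{\pi/4}\cos(2\theta')h\,d\theta'$ and $H'(\pi/4) = \int_0^{\pi/4}\sin(2\theta')h\,d\theta'$ together with the support hypothesis to get the Riccati inequality for $M$, and then feed the resulting $L^1$ decay into the gradient equation at the fixed point $\theta=0$ and into the uniform bound $|H'|\le M$. The only cosmetic differences are that you make the support confinement step explicit and pass through $\cos(4\epsilon)\ge 1-8\epsilon^2$ rather than the paper's $(1-2\epsilon^2)^2-(2\epsilon)^2\ge 1-8\epsilon^2$.
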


\begin{proof}
	We shall use the formula \begin{equation}\label{eq:Hgradformula}
	\begin{split}
	H'(\theta) =  \sin(2\theta)\int_0^\theta h(\theta')\sin(2\theta')d\theta' - \cos(2\theta) \int_{\theta}^{\pi/4} h(\theta')\cos(2\theta')d\theta' 
	\end{split}
	\end{equation} which follows directly from differentiating the kernel expression for $H(\theta)$. From this it is clear from non-negativity of $h$ that \begin{equation*}
	\begin{split}
	\Vert H' \Vert_{L^\infty} = \max\left( -H'(0), H'(\pi/4)  \right)~. 
	\end{split}
	\end{equation*} Furthermore, under the assumption that the support of $h$ is contained in $[0,\epsilon]$ with $\epsilon \le 1/4$ (which is valid for all $t \ge 0$), \begin{equation*}
	\begin{split}
	- H'(t,0) = \int_0^{\pi/4} \cos(2\theta)h(t,\theta)  d\theta \ge \frac{4}{3}  \int_0^{\pi/4}  \sin(2\theta)h(t,\theta)  d\theta > H'(t,\pi/4)~,
	\end{split}
	\end{equation*} so $\V H'(t)\V_{L^\infty} = -H'(t,0)$ for all $t \ge 0$. Then, it is straightforward to see using \eqref{eq:Hgradformula} that \begin{equation*}
	\begin{split}
	\V h(t)\V_{L^1} \ge \V H'(t)\V_{L^\infty} \ge (1 - 2\epsilon^2) \V h(t)\V_{L^1},\quad \forall t \ge 0 . 
	\end{split}
	\end{equation*} Again using \eqref{eq:Hgradformula}, we also obtain \begin{equation*}
	\begin{split}
	0 \le H'(t,\pi/4) \le 2\epsilon \V h(t)\V_{L^1}. 
	\end{split}
	\end{equation*}
	
	Now integrating the system \eqref{eq:1D} on $(0,\pi/4)$  gives \begin{equation*}
	\begin{split}
	\frac{d}{dt} \int_0^{\pi/4} h(t,\theta)d\theta &= \int_0^{\pi/4} -2H(t,\theta) h'(t,\theta)d\theta =  \int_0^{\pi/4} 2H'(t)(4H(t) + H''(t)) d\theta \\
	&=   -(H'(t,0))^2  + (H'(t,\pi/4))^2   ~.
	\end{split}
	\end{equation*} Since \begin{equation*}
	\begin{split}
	(1 - 8\epsilon^2) \V h(t)\V_{L^1}^2 \le H'(t,0)^2 - H'(t,\pi/4)^2 \le \V h(t)\V_{L^1}^2,
	\end{split}
	\end{equation*} we obtain that \begin{equation*}
	\begin{split}
	-(1 - 8\epsilon^2)  \V h(t)\V_{L^1}^2   \ge \frac{d}{dt} \V h(t)\V_{L^1} \ge - \V h(t)\V_{L^1}^2 ~,
	\end{split}
	\end{equation*} and integrating in time gives \eqref{eq:decay_L1_ep}. 
	
	Then, to obtain the desired upper bound for $\V h'\V_{L^\infty}$, we note that for all $\theta \in (0,\pi/4)$ with $h'_0(\theta) \ne 0$ we have from \eqref{eq:gradient} and \eqref{eq:decay_L1_ep} that \begin{equation*}
	\begin{split}
	\left| \frac{d}{dt} \ln\left(  h'\circ \phi(t,\theta ) \right) \right| \le  2\V H'\V_{L^\infty} \le  2 \V h(t)\V_{L^1} \le \frac{2}{(1-8\epsilon^2)t + \V h_0\V_{L^1}^{-1}},\qquad \forall t \ge 0 . 
	\end{split}
	\end{equation*}  Integrating in time gives the second inequality of \eqref{eq:growth_gradient_ep}. The lower bound is simply obtained by evaluating the equation for $h'$ at 0 and using \eqref{eq:decay_L1_ep}: \begin{equation*}
	\begin{split}
	\frac{d}{dt} h'(t,0) = -2 H'(t,0) h'(t,0) \ge \frac{2 (1 - 2\epsilon^2) }{t + \V h_0\V_{L^1}^{-1}} h'(t,0).
	\end{split}
	\end{equation*} Integrating the above in time finishes the proof of \eqref{eq:growth_gradient_ep}. 
\end{proof}

\begin{remark}
	It is likely that a more careful analysis can establish similar statements for much more general initial data which is odd and non-negative on $(0,\pi/4)$, with less assumptions on the initial support. We do not dwell on this issue here.
\end{remark}

\subsubsection{Sharp gradient growth in the presence of a boundary}

Inspired by the work of Kiselev and Sverak \cite{KS}, let us demonstrate that when our domain has a boundary, we can achieve the sharp growth rate. The growth in our case actually occur away from the boundary but its role is to keep enough $L^1$-mass for all time, which in turn guarantees the uniform rate of growth.

We consider the problem \eqref{eq:1D} on the compact interval $Q := \{ -\pi/4 \le \theta \le \pi/4  \}$, where the endpoints are not identified with each other. If we again consider the class of initial data on $Q$ with odd symmetry around $\theta^* = 0$, then the system on $Q$ is well-posed in $h(t,\cdot) \in L^\infty(Q)$ simply because it exactly corresponds to solving \eqref{eq:1D} on the whole circle with the rotationally extended initial data \begin{equation*}
\begin{split}
\tilde{h}_0(\theta) = h_0(\theta + k\pi/2)~, \quad k \in \mathbb{Z}~.
\end{split}
\end{equation*} 
Similarly, it can be  shown that if $h_0 \in W^{n,\infty}(Q)$ then we have $(h, H) \in W^{n,\infty}(Q)\times W^{{n+2},\infty}(Q)$ for all time. Together with the odd symmetry assumption, this provides a unique solution to the $2D$ Euler equation on the sector \begin{equation*}
\begin{split}
\tilde{Q} := \{ (r,\theta) \in \mathbb{R}^2 : - \pi/4 \le \theta \le \pi/4 \}~,
\end{split}
\end{equation*} which satisfy the slip boundary conditions. Our uniqueness proof can be easily adapted to this setting, and note that the rotational symmetry assumption is even hidden in this case. Let us mention that the Yudovich theorem has been successfully extended to domains with polygonal corners, very recently; see \cite{co1,co2,co3}.

\begin{theorem}\label{thm:growth_boundary}
	Let $h_0 \in W^{1,\infty}(Q)$ be odd with respect to $\theta^* = 0$ and non-negative on $[0,\pi/4]$. If $h_0(\pi/4) > 0$, then $h(t,\cdot)$ converges to the odd stationary solution which equals $h_0(\pi/4)$ on $(0,\pi/4)$: \begin{equation*}
	\begin{split}
	\Vert h(t,\cdot) - h_0(0) \Vert_{L^1(0,\pi/4)} \rightarrow 0~, \quad t \rightarrow \infty~. 
	\end{split}
	\end{equation*}
	In addition, the solution exhibits the sharp growth rate of the gradient; 
	\begin{equation*}
	\begin{split}
	\V h'(t)\V_{L^\infty}  \ge C\exp( ct  )~,
	\end{split}
	\end{equation*} with some constant $c,C > 0$ depending only on the norm $\Vert h_0 \Vert_{W^{1,\infty}}$. 
\end{theorem}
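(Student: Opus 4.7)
The plan is to exploit the two-fixed-point structure of the 1D flow on $[0,\pi/4]$. By the oddness of $h_0$ around $0$ together with the $\pi/2$-periodic rotational extension, the extended vorticity is also odd (almost everywhere) about $\theta = \pi/4$, and the evenness of the kernel $K_{S^1}$ then yields $H(t,0) = H(t,\pi/4) = 0$ for all $t \geq 0$. In particular both endpoints are fixed by the flow $\phi_t$, and Lemma \ref{lem:positivity} applied on $(0,\pi/4)$ gives $H(t,\cdot) < 0$ strictly in the interior with $H'(t,0) < 0 < H'(t,\pi/4)$, so $\theta = 0$ is attracting and $\theta = \pi/4$ is repelling. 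Sign-preservation then maintains $h(t,\cdot) \geq 0$ on $[0,\pi/4]$ for all time.

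For the $L^1$ convergence I would argue as follows. For each $\theta_0 \in (0,\pi/4)$, the orbit $\phi_t(\theta_0)$ is strictly decreasing, bounded below by $0$, and any limit must be a zero of $H(t,\cdot)$ on $[0,\pi/4]$, which by the structural fact above forces the limit to be $0$. Equivalently $\phi_t^{-1}(\theta) \nearrow \pi/4$ for every $\theta \in (0,\pi/4)$, so from $h(t,\theta) = h_0(\phi_t^{-1}(\theta))$ and continuity of $h_0$ at $\pi/4$ we obtain pointwise convergence to $h_0(\pi/4)$; dominated convergence with envelope $\V h_0\V_{L^\infty}$ finishes this part.

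The heart of the proof is the exponential gradient growth, for which I plan to first secure a uniform-in-time lower bound on $|H'(t,0)|$. Choose $\delta > 0$ with $h_0 \geq h_0(\pi/4)/2$ on $[\pi/4 - \delta, \pi/4]$. Because $\phi_t$ fixes $\pi/4$ and pushes interior points leftward, $\phi_t^{-1}([\pi/4 - \delta, \pi/4]) \subset [\pi/4 - \delta, \pi/4]$ for every $t$, so $h(t,\cdot) \geq h_0(\pi/4)/2$ on $[\pi/4 - \delta, \pi/4]$ uniformly in time. Evaluating \eqref{eq:Hgradformula} at $\theta = 0$ gives
\begin{equation*}
|H'(t,0)| = \int_0^{\pi/4} \cos(2\theta')\, h(t,\theta')\, d\theta' \geq \frac{h_0(\pi/4)}{2} \int_{\pi/4 - \delta}^{\pi/4} \cos(2\theta')\, d\theta' =: c_0 > 0.
\end{equation*}
Since $\V H''\V_{L^\infty} \leq C\V h_0\V_{L^\infty}$, a Taylor expansion at $0$ then yields $-H(t,\theta) \geq (c_0/2)\theta$ on $[0,\theta^{**}]$ with $\theta^{**}$ depending only on $c_0$ and $\V h_0\V_{L^\infty}$. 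Choosing any $\theta^* \in (\pi/4 - \delta, \pi/4)$ with $h_0(\theta^*) \geq h_0(\pi/4)/2$, the convergence $\phi_t(\theta^*) \to 0$ makes $T_0 := \inf\{t : \phi_t(\theta^*) \leq \theta^{**}\}$ finite, and thereafter the ODE $\dot\phi_t = 2H(\phi_t) \leq -c_0 \phi_t$ integrates to $\phi_t(\theta^*) \leq C' e^{-c_0 t}$ for all $t \geq 0$. Finally, since $h(t,\phi_t(\theta^*)) = h_0(\theta^*) \geq h_0(\pi/4)/2$ while $h(t,0) = 0$, the mean value theorem on $[0,\phi_t(\theta^*)]$ delivers
\begin{equation*}
\V h'(t)\V_{L^\infty} \geq \frac{h_0(\theta^*)}{\phi_t(\theta^*)} \geq c\, e^{c_0 t}.
\end{equation*}

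The main obstacle is to synchronize the two quantitative estimates through a common constant: the uniform lower bound on $|H'(t,0)|$ relies on the mass near $\pi/4$ not being flushed away, while the exponential attraction at $0$ in turn requires exactly this uniform bound to be in force at all times. The repelling fixed point at $\pi/4$ is what closes the loop — mass initially near the boundary can only spread leftward and never leaves, sustaining the drive at $0$ indefinitely. Without the boundary, the mass would drain from $(0,\pi/4)$ (as in Theorem \ref{thm:growth}), and the gradient growth would degrade to the polynomial rate established there.
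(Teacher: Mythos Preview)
Your argument is essentially correct, and for the exponential growth you take a genuinely different route from the paper. The paper tracks $h'(t,\phi_t(x))$ via the transported ODE $\frac{d}{dt} h'(t,\phi_t(x)) = -2H'(t,\phi_t(x))\, h'(t,\phi_t(x))$ and feeds in the lower bound $-H'(t,0) \ge c\, h_0(\pi/4)$ (obtained from the $L^1$ convergence, hence only for large $t$) once the trajectory sits near the origin. You instead bound the orbit itself, $\phi_t(\theta^*) \le C' e^{-c_0 t}$, and then read off the gradient through the mean value theorem on $[0,\phi_t(\theta^*)]$. Both rest on the same mass-preservation mechanism near $\pi/4$, but your version is more geometric, never differentiates the transport equation, and yields the uniform-in-time bound $|H'(t,0)|\ge c_0$ directly rather than only eventually.

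There is, however, a genuine gap in your $L^1$-convergence step. You argue that since $\phi_t(\theta_0)$ is strictly decreasing and bounded below, ``any limit must be a zero of $H(t,\cdot)$''; but $H$ is time-dependent, so the autonomous-ODE reasoning does not apply. If $\phi_t(\theta_0)\to\ell\in(0,\pi/4)$, all you can conclude is $\int_0^\infty (-H(t,\phi_t(\theta_0)))\,dt<\infty$, which does not by itself force $\ell=0$ --- Lemma~\ref{lem:positivity} gives $H(t,\cdot)<0$ at each fixed time but no uniform lower bound. The repair is already in your toolkit: your mass-preservation estimate $h(t,\cdot)\ge h_0(\pi/4)/2$ on $[\pi/4-\delta,\pi/4]$, combined with the strict negativity of the antisymmetrized kernel on the interior of $(0,\pi/4)^2$, gives a \emph{uniform-in-time} lower bound $-H(t,\theta)\ge c_K>0$ on any compact $K\subset(0,\pi/4)$, which then forces finite transit time through $K$ and hence $\ell=0$. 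This is exactly the order the paper follows; you should move your mass-preservation argument forward and invoke it before claiming convergence (and before asserting $T_0<\infty$, which currently depends on that unproven convergence).
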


\begin{proof}
	The fact that we have initially $h_0(\pi/4) > 0$ gives a global-in-time lower bound on the $L^1$-norm; indeed, one can find some triangle near the fixed point $\pi/4$ which lies below the graph of $h(t)$ for all $t \ge 0$. Indeed using this argument one has for all small $\delta > 0$ that \begin{equation*}
	\begin{split}
	\Vert h(t,\cdot) \Vert_{L^1(\delta,\pi/4-\delta)} \ge \epsilon(\delta) > 0~. 
	\end{split}
	\end{equation*} Since the kernel $K^1_{S^1}$ for $H$ is strictly negative on $(\delta,\pi/4 - \delta) \times (\delta,\pi/4-\delta)$, we obtain a global in time, positive lower bound for $-H(t,\theta)$ for $\theta \in [\delta,\pi/4-\delta]$. The velocity is simply $2H$ and therefore the particle starting from $ \pi/4-\delta$ reaches the point $ \delta$ after some finite time. Since $\delta > 0$ is arbitrary and $h(t)$ is continuous in space and uniformly bounded in $L^\infty$, convergence in $L^1$ to the stationary solution which equals $h_0(\pi/4)$ is established. 
	
	To obtain the exponential growth statement, take any $x \in (0,\pi/4)$ for which $h'_0(x) \ne 0$ and recall that \begin{equation*}
	\begin{split}
	\frac{d}{dt} h'(t,\phi(t,x)) = -2 H'(t,\phi(t,x)) h'(t,\phi(t,x))~.
	\end{split}
	\end{equation*} We know that for any $\delta > 0$ there is $T(\delta) > 0$ such that for all $t > T(\delta)$, $0 <\phi(t,x) <\delta$. Since the convergence in $L^1$ to the stationary solution implies \begin{equation*}
	\begin{split}
	-H'(t,0) = \int_0^{\pi/4} \cos(2\theta)h(t,\theta)d\theta \ge c h_0(\pi/4)~
	\end{split}
	\end{equation*} for all sufficiently large $t$ (and taking into account that $|H''(t,\theta)|$ is uniformly bounded in $t,\theta$), we deduce that $h'(t,\phi(t,x))$ grows exponentially in time. 
\end{proof}


\begin{remark}
	The above result shows that the system is not globally stable in $L^1(S^1)$ (and similarly in all $L^p$ with $p<\infty$). That is, even if $h^{(1)}_0, h^{(2)}_0 \in C^0$ are arbitrarily close in $L^1$, it is clear that \begin{equation*}
	\begin{split}
	\V h^{(1)}(t) - h^{(2)}(t) \V_{L^1} \ge C \left| h^{(1)}_0(\pi/4) - h^{(2)}_0(\pi/4) \right|~
	\end{split}
	\end{equation*} for $t$ sufficiently large, where $C > 0$ is an absolute constant. 
\end{remark}

\begin{remark}
	We close this section by noting that there is really nothing special about the 4-fold symmetry assumption, and analogous results can be obtained for $m$-fold symmetric data with any $m \ge 3$. Let us only point out that when $m \ge 3$, we have \begin{equation*}
	\begin{split}
	\sum_{j=0}^{m-1} K_{S^1}(\theta + j 2\pi/m )  = c^1_m \left| \sin(m\theta/2) \right| + c^2_m~.
	\end{split}
	\end{equation*} with some constants $c^1_m, c^2_m$ depending only on $m$.
\end{remark}

\subsubsection{Growth of compactly supported solutions to $2D$ Euler}

As we have discussed in the introduction, the fact that we have solutions to the $1D$ system whose gradient grows almost immediately implies that there are compactly supported solutions to the $2D$ Euler equation whose gradient grows at least as fast as the $1D$ solutions do. 

\begin{corollary}
	For any $\epsilon>0$, there exists a $c>0$ and a 4-fold symmetric initial data $\omega_0 \in \mathring{C}^{0,1}(\mathbb{R}^2)$ whose unique solution to the $2D$ Euler equation grows in time with rate $t^{2-\epsilon}$: \begin{equation*}
	\begin{split}
	\V \omega(t)\V_{\mathring{C}^{0,1}(\mathbb{R}^2)} \ge ct^{2-\epsilon}~.
	\end{split}
	\end{equation*} If we consider the $2D$ Euler equation on the sector $\tilde{Q} = \{(r,\theta): -\pi/4 \le \theta \le \pi/4 \}$ with slip boundary conditions and vorticity odd in $\theta$, then there is initial data $\omega_0 \in \mathring{C}^{0,1}(\tilde{Q})$ which grows exponentially: \begin{equation*}
	\begin{split}
	\V \omega(t)\V_{\mathring{C}^{0,1}(\tilde{Q})} \ge c\exp(ct)~.
	\end{split}
	\end{equation*} In both cases, the solution can be compactly supported and the velocity is Lipschitz everywhere in space. 
\end{corollary}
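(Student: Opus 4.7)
My plan is to apply Theorem \ref{MainThm4} to transfer the gradient growth established for the $1D$ active scalar equation \eqref{eq:1D} onto compactly supported solutions of the $2D$ Euler equation. The mechanism is straightforward: Theorem \ref{MainThm4} furnishes a lower bound on $\V\omega(t)\V_{\mathring C^{0,\alpha}}$ in terms of $\V\omega^{1D}(t)\V_{\mathring C^{0,\alpha}}$ whenever $\omega_0 - \omega^{1D}(0,\cdot)$ belongs to $C^\alpha$ and vanishes at the origin, so it suffices to (i) produce a $0$-homogeneous Euler solution whose $\mathring C^{0,1}$ norm grows at the desired rate, and (ii) cut it off to obtain admissible compactly supported data. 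For the whole-plane claim, fix $\epsilon>0$ and choose $\delta>0$ so small that $2(1-2\delta^2) > 2-\epsilon$. Pick a smooth $4$-fold symmetric $h_0 \in C^\infty(S^1)$ which is odd at $\theta=0$, nonnegative on $[0,\pi/4]$, supported in $[0,\delta]$, with $h_0'(0)>0$, and let $h(t,\theta)$ be its global solution of \eqref{eq:1D}. Setting $\omega^{1D}(t,r,\theta):= h(t,\theta)$ defines a $0$-homogeneous $2D$ Euler solution via \eqref{eq:vv}, and Theorem \ref{thm:growth} gives
$$\V h'(t)\V_{L^\infty} \ge h_0'(0)\left(1 + \V h_0\V_{L^1} t\right)^{2-\epsilon}.$$
For $0$-homogeneous $\omega^{1D}(x) = h(\theta)$ one computes from \eqref{eq:endpoint} that $\V\omega^{1D}(t)\V_{\mathring C^{0,1}} = \V h(t)\V_{L^\infty} + \V h'(t)\V_{L^\infty}$, so the same growth rate holds on the $2D$ side.

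To obtain compactly supported data, let $\chi \in C_c^\infty(\mathbb{R}^2)$ be radial with $\chi \equiv 1$ on $B_1(0)$ and $\mathrm{supp}(\chi) \subset B_2(0)$, and set $\omega_0(x) := \chi(x)\,\omega^{1D}(0,x)$. This $\omega_0$ is compactly supported, $4$-fold symmetric, and lies in $\mathring C^{0,1}(\mathbb{R}^2)$ with norm controlled by $\V h_0\V_{W^{1,\infty}(S^1)}$. Because $\chi - 1$ vanishes in a neighborhood of the origin while $\omega^{1D}(0,\cdot)$ is $C^\infty$ away from the origin, the difference
$$\omega_0 - \omega^{1D}(0,\cdot) = (\chi-1)\,\omega^{1D}(0,\cdot)$$
is smooth on $\mathbb{R}^2$ and vanishes on a neighborhood of the origin, hence belongs to $C^\alpha(\mathbb{R}^2)$ for every $\alpha \in (0,1]$. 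Theorem \ref{MainThm4} then gives $\V\omega(t)\V_{\mathring C^{0,1}} \ge \V\omega^{1D}(t)\V_{\mathring C^{0,1}} \ge c(1+t)^{2-\epsilon}$ for all $t\ge 0$.

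For the sector $\tilde Q$, the construction is parallel. Pick $h_0 \in C^\infty(Q)$ which is odd at $0$, nonnegative on $[0,\pi/4]$, and satisfies $h_0(\pi/4)>0$; Theorem \ref{thm:growth_boundary} yields $\V h'(t)\V_{L^\infty} \ge C\exp(ct)$. Multiplying the associated $0$-homogeneous profile by a radial cutoff (leaving the angular profile untouched, so that the odd symmetry across each side of the sector, and thus the slip boundary condition, is preserved) gives compactly supported data $\omega_0 \in \mathring C^{0,1}(\tilde Q)$, and the same application of Theorem \ref{MainThm4} delivers exponential growth of $\V\omega(t)\V_{\mathring C^{0,1}(\tilde Q)}$. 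The Lipschitz continuity of the velocity on all of space in both cases is immediate from Theorem \ref{MainThm3}: since $\omega_0 \in \mathring C^{0,\alpha}(\mathbb{R}^2)$ for every $\alpha \in (0,1)$, we have $\V\nabla u(t)\V_{L^\infty} \le C\exp(Ct)$.

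The main technical subtlety is to ensure that Theorem \ref{MainThm4} applies at the Lipschitz endpoint $\alpha=1$ (the theorem is most naturally used at $\alpha<1$, where the $\mathring C^{0,\alpha}$ Hölder quotient at the origin is the relevant quantity). This is not a genuine obstruction in our setup: the profile $h_0$ is $C^\infty$ and the perturbation $(\chi-1)\omega^{1D}(0,\cdot)$ is smooth everywhere and vanishes identically near the origin, so the argument underpinning Theorem \ref{MainThm4} — which compares the exact $0$-homogeneous evolution with the perturbed one via stability of the angular profile — runs identically in the Lipschitz norm and yields the claimed bound. Alternatively, one may run the proof of Theorem \ref{MainThm4} at $\alpha<1$ close to $1$ and then upgrade via the smoothness of $h$ in the angular variable, which is preserved by \eqref{eq:1D} since the advecting field $2H$ is two derivatives smoother than $h$.
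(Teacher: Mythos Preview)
Your proposal is correct and follows essentially the same approach as the paper: choose a $1D$ profile $h_0$ exhibiting the desired gradient growth (via Theorem~\ref{thm:growth} or Theorem~\ref{thm:growth_boundary}), form the $0$-homogeneous solution $\omega^{1D}=h(t,\theta)$, cut off by a radial bump to obtain compactly supported $\omega_0\in\mathring C^{0,1}$, and then argue that the $2D$ solution inherits the growth of the $1D$ angular derivative at the origin. The only cosmetic difference is that you package the last step as an invocation of Theorem~\ref{MainThm4}, whereas the paper's proof of the Corollary unpacks that step directly: it writes $\omega^{2D}(t):=\omega(t)-h(t,\theta)$, observes (as in the SQG argument of Theorem~\ref{thm:lwp_cbu}) that $\omega^{2D}(t)$ remains in $C^{0,1}(\mathbb R^2)$ for all time, and then uses the pointwise inequality $\bigl||x|\nabla\omega(t,x)\bigr|\ge\bigl||x|\nabla h(t,\theta)\bigr|-|x||\nabla\omega^{2D}(t,x)|$ together with $|\nabla\omega^{2D}(t,\cdot)|\le C(t)$ to conclude $\sup_{x\ne 0}|x||\nabla\omega(t,x)|\ge\tfrac12\|\partial_\theta h(t)\|_{L^\infty}$ after sending $|x|\to 0$. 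This direct argument sidesteps the endpoint issue you flag with Theorem~\ref{MainThm4} at $\alpha=1$, since it works pointwise rather than through the abstract comparison statement.
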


We will only give a sketch of the proof since it essentially follows the same lines as the proof of Theorem \ref{thm:lwp_cbu}. For the second case of the corollary, we also need the global well-posedness of $2D$ Euler with vorticity in $C^{0,1}$ and $\mathring{C}^{0,1}$ on the sector $\tilde Q=\{(r,\theta): -\pi/4\le\theta\leq \pi/4\}$ (with the odd assumption on the vorticity). To obtain a priori estimates in either $C^{0,1}$ or $\mathring{C}^{0,1}$ of the vorticity, one just needs an $L^\infty$-bound on the velocity which follows immediately from Lemma 3.2 of \cite{EJB} by taking any $0 < \alpha < 1 $. Then it suffices to construct an approximating sequence satisfying the a priori estimates, and this can be done in a parallel manner with Theorem 1 of \cite{EJB} where we have treated the Boussinesq case (the Euler equations being the special case with trivial density function). The global existence follows from the logarithmic estimate (3.2) from Lemma 3.5 of \cite{EJB} which holds for $\alpha = 1$.  

\begin{proof}
	In both cases, take a $1D$ initial data $h_0 \in W^{1,\infty}$ from the $1D$ system \eqref{eq:1D} (on $S^1$ in the first case and $[-\pi/4,\pi/4]$ in the second case) which exhibits the desired growth rate of the gradient, and consider an initial data to the $2D$ Euler equation of the form \begin{equation*}
	\begin{split}
	\omega_0 = \omega^{2D}_0 +  h_0(\theta) \in \mathring{C}^{0,1}
	\end{split}
	\end{equation*} with any $\omega^{2D}_0 \in C^{0,1}(\mathbb{R}^2)$, which enjoys the same set of symmetries with $ h_0$. Here $\omega^{2D}_0$ may be chosen that $\omega_0$ is compactly supported. Then, there is a unique global-in-time solution to 2D Euler in the space $\mathring{C}^{0,1}$, and it is indeed straightforward to show that \begin{equation*}
	\begin{split}
	\omega^{2D}(t) := \omega(t) -   h(t)
	\end{split}
	\end{equation*} remains in $C^{0,1}(\mathbb{R}^2)$ for all time. (See for instance the proof of Theorem \ref{thm:lwp_cbu} where we establish this type of statement for the SQG equation.) Then, for any large $t > 0$, we have a pointwise inequality \begin{equation*}
	\begin{split}
	\left| |x|\nabla \omega(t,x) \right| \ge \left| |x| \nabla   h(t,\theta)  \right| - |x||\nabla \omega^{2D}(t,x)|
	\end{split}
	\end{equation*} and since $|\nabla\omega^{2D}(t,x)| \le C(t)$, taking the limit $|x| \rightarrow 0$ guarantees that \begin{equation*}
	\begin{split}
	\sup_{x \in \mathbb{R}^2 \backslash \{ 0\} } \left| |x|\nabla\omega(t,x) \right| \ge \frac{1}{2}\V\partial_\theta h(t) \V_{L^\infty(S^1)} 
	\end{split}
	\end{equation*} holds.
\end{proof}

\subsection{Measure-valued data and quasi-periodic solutions}

As it was noted in the previous sections, in the $1D$ system \eqref{eq:1D}, the active scalar just in $L^1$ was sufficient to guarantee that the velocity is Lipschitz. This implies that we can actually solve the equation with $L^1$ initial data, or even with finite signed measures $\mathcal{M}$. 

If we restrict to the class of atomic measures, i.e. measures supported on a finite set, then we obtain a well-posed dynamical system of point vortices. The resulting measure-valued solutions give vortex-sheet solutions to the $2D$ Euler equation. The associated velocity on $2D$ is not Lipschitz but only locally bounded in the radial direction, and it is unclear whether this is the unique solution in the class of measure-valued vorticity.

We describe the $1D$ system of point vortices. For simplicity, we keep the assumption that the vorticity is 4-fold symmetric, and describe the data only in an interval of length $\pi/2$ in $S^1$. 

\begin{proposition}
	Consider initial data \begin{equation*}
	\begin{split}
	h_0 (\theta) = \sum_{j=1}^N a_j \delta_{\theta^j_0}~,
	\end{split}
	\end{equation*} where $a_j \in \mathbb{R}$ are weights and $\theta^j_0 \in [0,\pi/2)$. The unique global in time solution of the following ODE system \begin{equation}\label{eq:point_vortex_ODE}
	\begin{split}
	\frac{d}{dt} \theta^j(t) = \frac{2}{\pi} \sum_{l=1}^N a_l  \sin (|2\theta^l(t) - 2\theta^j(t)|)~, \qquad \theta^j(0) = \theta^j_0~,\qquad j = 1, \cdots, N
	\end{split}
	\end{equation} gives the unique solution of \eqref{eq:1D} by setting \begin{equation*}
	\begin{split}
	h(t,\theta) = \sum_{j=1}^N a_j \delta_{\theta^j(t)}~.
	\end{split}
	\end{equation*}
\end{proposition}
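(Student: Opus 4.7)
\medskip

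\noindent\textbf{Proof proposal.} The plan is to realize the point-vortex ODE \eqref{eq:point_vortex_ODE} as the characteristic equations of the transport system \eqref{eq:1D} driven by an atomic measure, and to check that self-interaction causes no trouble because the symmetrized Biot-Savart kernel vanishes on the diagonal. Concretely, I would first substitute the ansatz $h(t,\theta) = \sum_{j} a_j \delta_{\theta^j(t)}$ (extended $4$-fold symmetrically to all of $S^1$) into \eqref{eq:Biot_Savart_1D} and use the computation from Lemma~\ref{lem:positivity} that the $4$-fold symmetrization of $K_{S^1}$ equals $\tfrac{\pi}{8}\lvert\sin(2\cdot)\rvert$. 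This produces the explicit formula
\begin{equation*}
2H(t,\theta) \;=\; \frac{2}{\pi}\sum_{l=1}^{N} a_l \,\bigl|\sin\!\bigl(2\theta - 2\theta^l(t)\bigr)\bigr|,
\end{equation*}
which is globally Lipschitz in $\theta$, continuous across every vortex location, and has no self-contribution at $\theta = \theta^j$ because $|\sin 0| = 0$. Evaluating at $\theta = \theta^j(t)$ recovers exactly the right-hand side of \eqref{eq:point_vortex_ODE}.

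Given this, the well-posedness of \eqref{eq:point_vortex_ODE} is immediate: the right-hand side depends on $(\theta^1,\dots,\theta^N)$ through $\sin(|\cdot|)$, which is $1$-Lipschitz, so Picard--Lindel\"of gives local-in-time existence and uniqueness, and the uniform bound on the vector field yields global solutions. Crucially, potential collisions $\theta^j(t) = \theta^l(t)$ do not obstruct solvability because the vector field remains Lipschitz across them.

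To close the loop and show that the resulting measure indeed solves \eqref{eq:1D}, I would consider the Lipschitz vector field $\theta \mapsto 2H(t,\theta)$ defined by the formula above, and let $\phi(t,\cdot) : S^1 \to S^1$ denote its flow. By the uniqueness part of Picard--Lindel\"of, the characteristic curves $\phi(t,\theta_0^j)$ coincide with the solutions $\theta^j(t)$ of \eqref{eq:point_vortex_ODE}. Testing $h(t,\cdot) = \sum_j a_j \delta_{\theta^j(t)}$ against any $\varphi \in C^1(S^1)$ gives
\begin{equation*}
\frac{d}{dt}\int_{S^1} \varphi \, h(t) \,d\theta \;=\; \sum_j a_j\, \varphi'(\theta^j(t))\,\dot\theta^j(t) \;=\; \sum_j a_j\, \varphi'(\theta^j(t))\cdot 2H(t,\theta^j(t)) \;=\; \int_{S^1} 2H\,\varphi' \, h(t) \,d\theta,
\end{equation*}
which is the distributional formulation of $\partial_t h + 2H\,\partial_\theta h = 0$. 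Uniqueness in the class of atomic measures with $N$ prescribed weights reduces to uniqueness for the ODE \eqref{eq:point_vortex_ODE}, already established.

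The only genuinely delicate point, and the one I expect to require some care, is the consistency between the distributional interpretation of $\partial_\theta h$ (a sum of derivatives of Dirac masses) and the Lipschitz-but-non-$C^1$ coefficient $2H$, whose corners sit exactly at the vortex locations. The cleanest way to sidestep ill-defined products of distributions is to avoid interpreting \eqref{eq:1D} distributionally in the strong sense and instead define the solution as the pushforward $h(t,\cdot) = \phi(t,\cdot)_{\#} h_0$ of the initial measure by the flow of $2H$; the identity $\phi(t,\theta_0^j) = \theta^j(t)$ then gives the claimed formula, and the weak form above holds without ever differentiating $h$ pointwise.
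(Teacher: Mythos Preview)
Your proposal is correct and follows the natural line of argument implicit in the paper: the paper does not give a detailed proof of this proposition but relies on the remark preceding it that $h \in \mathcal{M}(S^1)$ already makes the velocity $2H$ Lipschitz (via the Lipschitz kernel $K^1_{S^1} = \tfrac{\pi}{8}|\sin(2\cdot)|$), whence the flow exists and the pushforward of the atomic data solves \eqref{eq:1D}. Your write-up makes this explicit and correctly flags the one genuine subtlety---the product $2H\,\partial_\theta h$ at the corners of $H$---and resolves it the right way, by defining the solution as the pushforward rather than distributionally; the only minor point is that a careful bookkeeping of the constants (from $H = \tfrac{2}{\pi}\sum_j a_j K^1_{S^1}(\theta-\theta^j)$) gives $2H(\theta) = \tfrac{1}{2}\sum_l a_l|\sin 2(\theta-\theta^l)|$ rather than $\tfrac{2}{\pi}$, so either the paper's stated ODE carries a harmless normalization typo or a time rescaling, but this does not affect your argument.
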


We note that the point vortices cannot collide with each other since the conservation of the norm $\V h(t,\cdot) \V_{\mathcal{M}(S^1)}$ implies that the velocity is Lipschitz for all time. This also shows that point vortices can approach each other at an exponential rate. 

It will be convenient to assume that the points are distinct and ordered; \begin{equation*}
\begin{split}
\theta_0^1 < \theta_0^2 < \cdots < \theta_0^{N} ~, \qquad |\theta_0^{N} - \theta_0^1 | < \frac{\pi}{2}
\end{split}
\end{equation*} and then the ordering will be preserved by the dynamics. Since there is a mean rotation of the whole system, it is more efficient to study the distance between adjacent vortices; set \begin{equation*}
\begin{split}
z^j(t) := \theta^{j+1}(t) - \theta^j(t)~,\qquad j = 1, \cdots, N-1~. 
\end{split}
\end{equation*} Then we may re-write the ODE system \eqref{eq:point_vortex_ODE} in terms of $z^1,\cdots,z^{N-1}$. 

In the simplest case when there is only one point vortex (modulo rotational symmetry), then it simply rotates with a speed proportional to its weight. When we have two points $\theta^1$ and $\theta^2$ with equal weights (say 1), the difference $z^1 = \theta^2 - \theta^1$ does not change in time since the kernel is even. Therefore, two vortices rotate together, keeping the distance between them. 

Interestingly, once we add one more vortex, then the evolution of the distances $z^1$ and $z^2$ is given by a completely integrable Hamiltonian system. All orbits are periodic except for one stationary point, and any positive real number is achieved as a period of some orbit.

Since there is a constant speed mean rotation of the vortices, the resulting system for $\theta^1, \theta^2$, and $\theta^3$ is quasi-periodic in general. 

\begin{figure}
	\includegraphics[height=60mm]{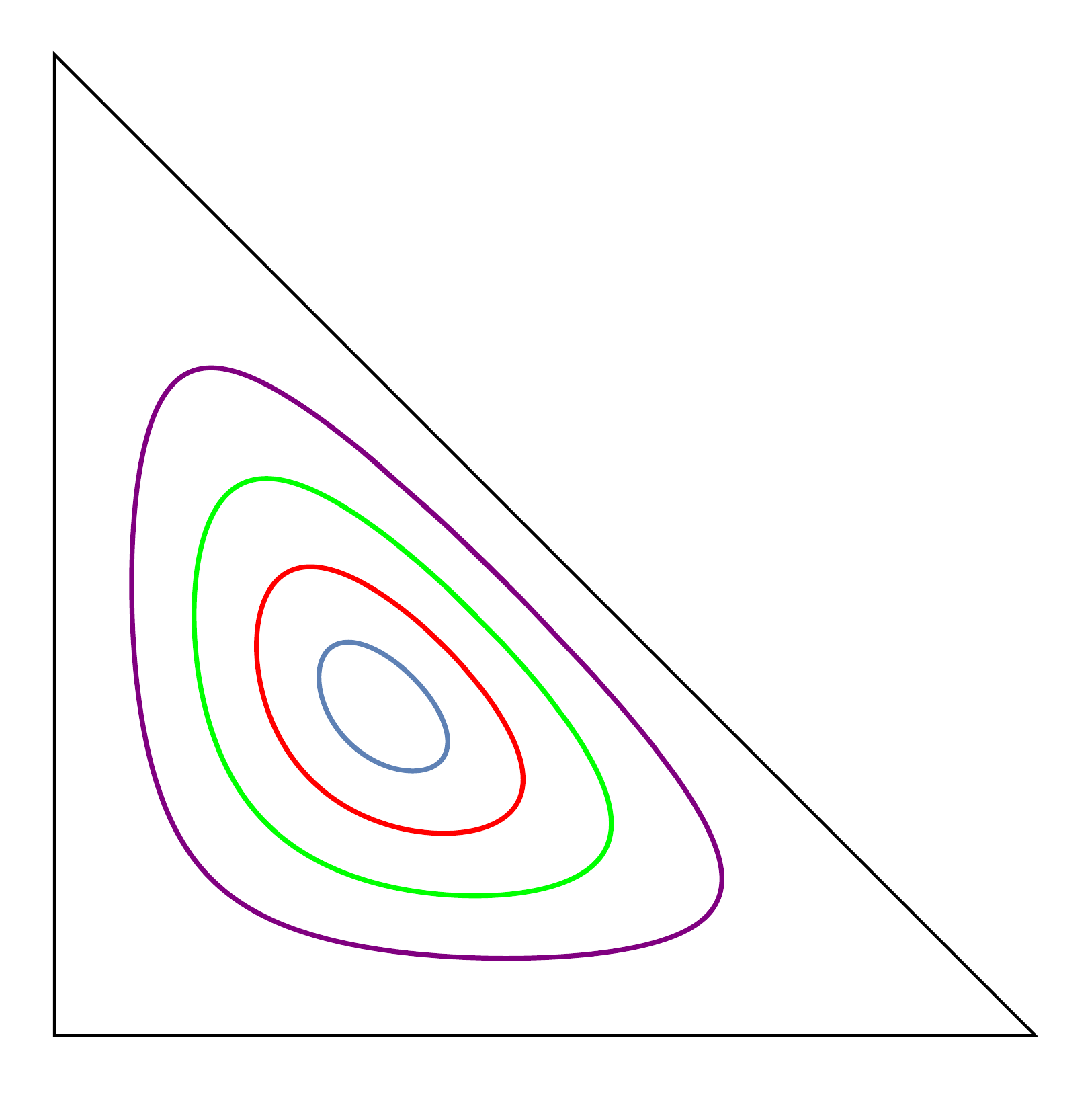}
	\centering
	\caption{A few trajectories for the system \eqref{eq:integrable}}
	\label{fig:periodic}
\end{figure}

\begin{theorem}[Quasi-periodic solutions]
	Given three point vortices of equal weight satisfying $0 \le \theta^1_0 < \theta^2_0 < \theta^3_0 < \pi/2$, the dynamics of gaps $z^1$ and $z^2$ is always periodic, and stationary only when $z^1 = z^2 = \pi/6$. This implies that generically the evolution of the triple $\{ \theta^1, \theta^2, \theta^3  \}$ undergoes a quasi-periodic motion.
\end{theorem}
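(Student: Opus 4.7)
The plan is to reduce the three-vortex dynamics to a planar Hamiltonian flow on gap coordinates and then classify its orbits via level-set analysis. Set $z^1 = \theta^2 - \theta^1$, $z^2 = \theta^3 - \theta^2$, $\bar\theta = \tfrac{1}{3}(\theta^1+\theta^2+\theta^3)$, and take $a_1 = a_2 = a_3 = 1$. Since $z^1, z^2, z^1+z^2 \in (0,\pi/2)$ throughout the motion (vortices cannot collide by the $\mathcal{M}$-conservation remarked earlier), subtracting adjacent components of \eqref{eq:point_vortex_ODE} gives the closed planar system
\begin{equation*}
\dot z^1 = \lambda\bigl[\sin(2z^2) - \sin(2(z^1+z^2))\bigr], \qquad \dot z^2 = \lambda\bigl[\sin(2(z^1+z^2)) - \sin(2z^1)\bigr], \qquad \lambda = \tfrac{2}{\pi},
\end{equation*}
on the open triangle $\Delta = \{(z^1,z^2) : z^1,z^2>0,\ z^1+z^2 < \pi/2\}$. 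A direct verification shows this system is Hamiltonian with
\begin{equation*}
\mathcal{H}(z^1,z^2) = -\tfrac{\lambda}{2}\bigl[\cos(2z^1) + \cos(2z^2) - \cos(2(z^1+z^2))\bigr], \qquad \dot z^1 = \partial_{z^2}\mathcal{H},\quad \dot z^2 = -\partial_{z^1}\mathcal{H}.
\end{equation*}

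The heart of the argument is a global level-set analysis of $\mathcal{H}$. Setting $\nabla\mathcal{H} = 0$ inside $\Delta$ forces $\sin(2z^1) = \sin(2z^2) = \sin(2(z^1+z^2))$; under the constraint $z^i, z^1+z^2 \in (0,\pi/2)$, only the supplementary-angle case is available, and it yields the unique interior critical point $z^1 = z^2 = \pi/6$. The Hessian there is $\lambda\bigl(\begin{smallmatrix} 2 & 1 \\ 1 & 2\end{smallmatrix}\bigr)$ with eigenvalues $3\lambda, \lambda > 0$, so it is a nondegenerate local minimum of $\mathcal{H}$ with value $-3\lambda/4$. Next, $\mathcal{H}$ takes the common constant value $-\lambda/2$ on all three edges of $\partial\Delta$: the cases $z^i = 0$ are immediate, and on the diagonal $z^1+z^2 = \pi/2$ the identity $\cos(2z^2) = -\cos(2z^1)$ collapses the cosine sum. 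Since $\mathcal{H}$ has no other interior critical point, for each $c \in (-3\lambda/4, -\lambda/2)$ the connected component of $\{\mathcal{H} = c\} \cap \Delta$ through any orbit is a smooth compact $1$-manifold that (by the Morse lemma at the center, extended by continuation out to the boundary without meeting another critical point) is a single topological circle encircling $(\pi/6,\pi/6)$. The Hamiltonian vector field does not vanish there, so the gap orbit is periodic with some period $T = T(c)$, the sole exception being the fixed point.

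To promote this to quasi-periodicity of the full triple, I reintroduce the mean $\bar\theta$. Summing the three equations gives $\dot{\bar\theta} = V(z^1,z^2) := \tfrac{4}{3\pi}[\sin(2z^1) + \sin(2z^2) + \sin(2(z^1+z^2))] > 0$, while each $\theta^j - \bar\theta$ is an explicit linear function of $(z^1,z^2)$. Consequently $\dot{\bar\theta}(t)$ is $T$-periodic and $\bar\theta(t) = \omega t + p(t)$ with $p$ of period $T$ and $\omega = T^{-1}\int_0^T V\,dt > 0$. The projected trajectory in $(\mathbb{R}/2\pi\mathbb{Z})^3$ therefore lies on a 2-torus parameterized by $(t \bmod T,\, \omega t \bmod 2\pi)$ and is quasi-periodic whenever the frequency ratio $\omega T/(2\pi)$ is irrational, a generic codimension-one condition on the energy level $c = \mathcal{H}(z^1_0,z^2_0)$. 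The main technical step is the second paragraph — ruling out level sets that touch $\partial\Delta$ or form heteroclinic chains — which is handled cleanly by the combination of two facts: the uniqueness of the interior critical point of $\mathcal{H}$ and the constancy of $\mathcal{H}$ on the entire boundary of $\Delta$.
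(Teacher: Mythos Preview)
Your proof is correct and follows essentially the same route as the paper: reduce to the gap variables, identify the conserved quantity $E(z^1,z^2)=\cos(2z^1)+\cos(2z^2)-\cos(2z^1+2z^2)$ (your $\mathcal H=-\tfrac{\lambda}{2}E$), observe that it has a unique interior critical point at $(\pi/6,\pi/6)$ and is constant on $\partial\Delta$, and conclude that the non-critical level sets are closed curves. Your write-up is in fact more thorough than the paper's: you compute the Hessian explicitly to confirm the nondegenerate extremum, and you carry out the quasi-periodicity step in detail by showing $\bar\theta(t)=\omega t+p(t)$ with $p$ periodic of period $T$, whereas the paper merely asserts the implication. (Incidentally, your signs in the $\dot z^i$ equations are the ones obtained directly from \eqref{eq:point_vortex_ODE}; the paper's displayed system \eqref{eq:integrable} differs by an overall sign, which is absorbed into its ``rescaling the time variable'' caveat and is immaterial for periodicity.)
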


\begin{proof}
	We have the system (by rescaling the time variable with some absolute constant if necessary) \begin{equation}\label{eq:integrable}
	\begin{split}
	\frac{d}{dt} z^1(t) &= \sin(2z^1 + 2z^2) - \sin(2z^2) \\
	\frac{d}{dt} z^2(t) &= \sin(2z^1)- \sin(2z^1 + 2z^2) ~.
	\end{split}
	\end{equation} The phase space is given by the pairs $(z^1,z^2)$ satisfying $ 0 < z^1, z^2 < \pi/2$ and $z^1 + z^2 < \pi/2$, and hence the sines are indeed non-negative. We then observe that the system is Hamiltonian with \begin{equation*}
	\begin{split}
	E(z^1,z^2) = \cos(2z^1) + \cos(2z^2) - \cos(2z^1 + 2z^2)~.
	\end{split}
	\end{equation*} This Hamiltonian takes its minimum value of 1 precisely on the boundary of the phase space $\{ z^1 = 0 \}, \{ z^2 = 0 \}$, and $\{ z^1 + z^2 = 0 \}$, and takes the unique maximum of 3/2 at the stationary point $(\pi/6,\pi/6)$. For any number $1 < \alpha < 3/2$, the set where $E = \alpha$ is a smooth curve inside the phase space, which must be the orbit of any initial data lying on the curve. This forces every orbit to be periodic.
\end{proof}

\begin{example}
	To clarify the type of solution we have obtained for the $2D$ Euler equation, consider on the circle the case of four point masses with the same weight, at points $\theta = 0, \pi/2, \pi, 3\pi/2$. Then, at $t = 0$, the resulting stream function and the velocity vector field on $\mathbb{R}^2$ is (a constant multiple of) \begin{equation*}
	\begin{split}
	\Psi_0(x_1,x_2) = |x_1||x_2|~, \qquad u_0(x_1,x_2) = \begin{pmatrix}
	-|x_1|\mbox{sign}(x_2) \\
	|x_2|\mbox{sign}(x_1)
	\end{pmatrix}
	\end{split}
	\end{equation*} so that the vorticity equals \begin{equation*}
	\begin{split}
	\omega_0(x_1,x_2) = |x_2|\delta_{\{ x_1 = 0 \} } + |x_1| \delta_{\{ x_2 = 0 \} }~,
	\end{split}
	\end{equation*} which is a locally finite measure with density growing as $|x|$. The solution simply rotates with constant angular speed. We emphasize that the dynamics is very different from the case of \begin{equation*}
	\begin{split}
	\omega(x_1,x_2) =\delta_{\{ x_1 = 0 \} } + \delta_{\{ x_2 = 0 \} }~,
	\end{split}
	\end{equation*}  which would form a spiral at the origin instantaneously.
\end{example}

\section{The SQG equation}\label{sec:SQG}

In this section, we consider the surface quasi-geostrophic (SQG) equation on $\mathbb{R}^2$: \begin{equation}\label{eq:SQG}
\begin{split}
\partial_t \Theta + u \cdot\nabla \Theta=0 ~, \qquad
u = \nabla^\perp (-\Delta)^{-1/2} \Theta~.
\end{split}
\end{equation} This system was suggested in works of Constantin, Majda, and Tabak \cite{CMT1,CMT2} as a two-dimensional mathematical model for geophysical atmospheric flows. It is very popular as a simpler model for the 3D Euler equation, because there are strong structural similarities between the two -- an observation from the original papers \cite{CMT1,CMT2}. Indeed, taking $\nabla^\perp$ of both sides of \eqref{eq:SQG}, \begin{equation*}
\begin{split}
\partial_t \nabla^\perp \Theta + (u\cdot\nabla) \nabla^\perp \Theta = \left[\nabla u \right] \nabla^\perp \Theta~,
\end{split}
\end{equation*} which resembles the 3D Euler equation once we put $\omega$ in place of $\nabla^\perp \Theta$. Although there are numerous interesting works on the issue of well-posedness of the SQG equation, it is an outstanding open problem to decide whether there exists a smooth solution which blows up in finite time.

\subsection{Local well-posedness in critical spaces}\label{subsec:lwp_sqg}

We show in this section that the system \eqref{eq:SQG} is locally well-posed in the scaling invariant spaces introduced in Section \ref{sec:EU}. 

It seems that in addition to the $m$-fold rotational symmetry for some $m\ge 3$, we need the active scalar $\Theta$ to be odd with respect to an axis. Under these symmetry assumptions, it is direct to verify the following uniqueness statement, along the lines of the proof of Lemma \ref{lem:unique_Poisson} which guaranteed that one can uniquely solve $-\Delta \Psi = \omega$ in the case of $2D$ Euler.

\begin{lemma}
	There is no nontrivial solution to the problem \begin{equation*}
	\begin{split}
	|\nabla| \Psi = 0~,
	\end{split}
	\end{equation*} assuming that $\Psi$ is $m$-fold symmetric with some $m\ge 3$, odd with respect to an axis, and satisfies the growth condition \begin{equation*}
	\begin{split}
	|\Psi(x)| \le C|x|^2~,\qquad x \in \mathbb{R}^2~.
	\end{split}
	\end{equation*}
\end{lemma}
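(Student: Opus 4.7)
The plan is to reduce this to a harmonic Liouville-type problem and then use the symmetry to eliminate the finitely many remaining degrees of freedom. Since $\Psi$ has at most quadratic growth, it is a tempered distribution and one can make sense of $|\nabla|\Psi = 0$ in $\mathcal{S}'(\mathbb{R}^2)$ (modulo polynomials). Applying $|\nabla|$ a second time, the identity $|\nabla|^2 = -\Delta$ then gives $-\Delta\Psi = 0$, so $\Psi$ is a harmonic function on $\mathbb{R}^2$. In particular, by interior elliptic regularity, $\Psi$ is smooth, so there is no issue with interpreting the pointwise symmetry assumptions.

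Next, the classical Liouville theorem, together with the bound $|\Psi(x)|\le C|x|^2$, forces $\Psi$ to be a harmonic polynomial of degree at most $2$. Such a polynomial has the form
\begin{equation*}
\Psi(x) = c_0 + c_1 x_1 + c_2 x_2 + c_3 (x_1^2 - x_2^2) + c_4\, x_1 x_2,
\end{equation*}
i.e.\ in polar coordinates it is a combination of the angular modes $e^{ik\theta}$ with $k \in \{0,\pm 1,\pm 2\}$. The $m$-fold rotational symmetry with $m\ge 3$ implies that only the modes $k$ divisible by $m$ can be nonzero; since none of $\pm 1,\pm 2$ is divisible by an integer $m\ge 3$, we conclude $c_1 = c_2 = c_3 = c_4 = 0$, so $\Psi \equiv c_0$. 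The oddness of $\Psi$ with respect to an axis then forces the constant to vanish, giving $\Psi \equiv 0$.

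The main obstacle is really just the first step: giving a rigorous meaning to $|\nabla|\Psi = 0$ for a function of quadratic growth, and justifying the distributional composition $|\nabla|\circ|\nabla| = -\Delta$ on this class. Once that is in hand the rest is classical. A clean alternative, which avoids fractional-Laplacian technicalities entirely, is to mimic line-by-line the proof of Lemma \ref{lem:unique_Poisson}: write $\Psi = |\nabla|^{-1}\Theta$ in the form of a Riesz potential of order $1$, and use the $m$-fold symmetry (which automatically kills the first two Fourier modes on every circle of large radius) to subtract off low-order terms until the remaining kernel has enough decay at infinity to force $\Psi \equiv 0$ upon sending the cutoff radius to infinity.
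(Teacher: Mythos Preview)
Your argument is correct. The paper does not actually prove this lemma in detail; it only remarks that the statement ``is direct to verify \ldots along the lines of the proof of Lemma \ref{lem:unique_Poisson}'', which is exactly the alternative you sketch at the end. So your primary route---pass from $|\nabla|\Psi=0$ to $-\Delta\Psi=0$, apply Liouville under the quadratic growth bound to reduce to a polynomial of degree at most two, and then eliminate the remaining coefficients by the $m$-fold and odd symmetries---is a genuinely different and more direct argument than what the paper indicates.

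One small correction to your description of the alternative: Lemma \ref{lem:unique_Poisson} does not proceed via a potential representation of the form $\Psi = (-\Delta)^{-1}\omega$. It instead uses the Poisson integral formula for harmonic functions on balls $B(0,R)$, subtracts off successive angular moments (using the vanishing of the first two Fourier modes on circles) to push the decay of the effective kernel down to $R^{-5}$, and then lets $R\to\infty$. The fractional analogue the paper has in mind would use the corresponding boundary representation for $|\nabla|$-harmonic functions. Your approach avoids this entirely by reducing to the local operator $-\Delta$; the cost, which you correctly identify, is having to pin down the meaning of $|\nabla|\Psi=0$ for quadratically growing $\Psi$ and the identity $|\nabla|^2=-\Delta$ on that class. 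The quickest way to close this is to note that, whatever reasonable interpretation one takes, the equation forces $\widehat{\Psi}$ to vanish on $\mathbb{R}^2\setminus\{0\}$ (since $|\xi|$ is smooth and nonvanishing there), so $\Psi$ is already a polynomial; harmonicity is then not even needed, and the symmetry argument finishes as you wrote.
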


In the local well posedness result below, the velocity is defined by $u = \nabla^\perp\Psi$, where $\Psi$ is the unique solution of $|\nabla|\Psi  = \Theta$. We show that (see below in Lemma \ref{lem:calpha_SQG}) $\nabla\Theta\in \mathring{C}^{0,\alpha}$ indeed implies $\nabla u \in \mathring{C}^{0,\alpha}$. 

\begin{theorem}
	Assume that the initial data $\Theta_0$ is $m$-fold rotationally symmetric for some $m \ge 3$, odd with respect to an axis, and $\nabla\Theta_0 \in \mathring{C}^{0,\alpha}(\mathbb{R}^2)$. Then, for some $T = T(\V \nabla\Theta\V_{\mathring{C}^{0,\alpha}(\mathbb{R}^2)}) > 0$, there is a unique solution $\nabla\Theta \in C([0,T);\mathring{C}^{0,\alpha}(\mathbb{R}^2))$ to the SQG equation with $\Theta(t,\cdot)$ being $m$-fold rotationally symmetric and odd.
\end{theorem}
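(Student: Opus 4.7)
The plan is to establish local well-posedness by an approximation-plus-uniqueness scheme, whose principal new ingredient is a linear bound for the map $\nabla\Theta\mapsto\nabla u$ in the critical space $\mathring{C}^{0,\alpha}$. This plays the role for SQG that Theorem \ref{MainThm2} played for Euler. From $u=\nabla^\perp|\nabla|^{-1}\Theta$ one has $\partial_i u_j = \epsilon_{jk}\,R_i(\partial_k\Theta)$, where $R_i$ is the Riesz transform whose kernel is degree $-2$ homogeneous, has vanishing mean on circles, and meets the integrable-decay hypothesis \eqref{eq:integrable_decay_kernel} after $m$-fold symmetrization for every $m\ge 3$. The assumption that $\Theta$ is $m$-fold rotationally symmetric and odd across an axis translates into an equivariant vector-field symmetry for $\nabla\Theta$ together with a definite parity of its two components; grouping the four scalar singular integrals $R_i\partial_k\Theta$ appropriately, one rewrites each as a singular integral on $\Theta$ itself with fully $m$-fold symmetric input, which places the whole operator in the framework of Lemmas \ref{lem:critical_L_infty}--\ref{lem:critical_C_alpha} and Corollary \ref{cor:sharp_C_alpha}. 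The conclusion is
\[
\|\nabla u\|_{\mathring{C}^{0,\alpha}} \le C_\alpha \|\nabla\Theta\|_{\mathring{C}^{0,\alpha}},
\]
and in particular $\nabla u\in L^\infty$ with $|u(x)|\le C_\alpha\|\nabla\Theta\|_{\mathring{C}^{0,\alpha}}|x|$, since the symmetries force $u(0)=0$.

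Given this linear estimate, I next derive the a priori bound. Differentiating the SQG equation yields
\[
\partial_t \nabla\Theta + (u\cdot\nabla)\nabla\Theta = -(\nabla u)^{\top}\nabla\Theta.
\]
Because $u$ is genuinely Lipschitz and vanishes at the origin, the Lagrangian flow $\Phi(t,\cdot)$ is a bi-Lipschitz homeomorphism with $|\Phi(t,x)|\approx|x|$, so the weight $|x|^\alpha$ built into $\mathring{C}^{0,\alpha}$ is preserved along transport. Propagating the weighted H\"{o}lder quantity along $\Phi$ (via Lemma \ref{lem:equiv}) and substituting the Step 1 bound produces a Riccati-type inequality
\[
\frac{d}{dt}\|\nabla\Theta(t)\|_{\mathring{C}^{0,\alpha}} \le C_\alpha \|\nabla\Theta(t)\|_{\mathring{C}^{0,\alpha}}^2,
\]
which closes on $[0,T]$ with $T\gtrsim \|\nabla\Theta_0\|_{\mathring{C}^{0,\alpha}}^{-1}$.

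For existence, I approximate $\Theta_0$ by a sequence $\Theta_0^n$ of smooth, compactly supported data preserving both symmetries and converging to $\Theta_0$ in a subcritical H\"{o}lder norm while remaining uniformly bounded in $\mathring{C}^{0,\alpha}$. The classical SQG theory produces smooth solutions $\Theta^n$ whose lifespan is extended to the common interval $[0,T]$ by the uniform bound of Step 2. An Arzel\`a--Ascoli argument in $C^{\alpha'}_{loc}$ for some $\alpha'<\alpha$ (applied to $\nabla\Theta^n$ together with the Lagrangian maps) extracts a limit $\Theta$ solving the equation, with the symmetries and the $\mathring{C}^{0,\alpha}$ bound inherited in the limit. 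For uniqueness, the difference $\delta\Theta=\Theta_1-\Theta_2$ of two candidate solutions with the same initial data satisfies $\partial_t\delta\Theta + u_1\cdot\nabla\delta\Theta = -\delta u\cdot\nabla\Theta_2$, where $\delta u$ is controlled by $\delta\Theta$ through a subcritical version of the Step 1 estimate. Since both $u_i$ are Lipschitz, a Gronwall inequality along $\Phi_1$ in a weighted norm adapted to the near-origin decay of $\delta\Theta$ forces $\delta\Theta\equiv 0$.

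The principal obstacle is Step 1: precisely matching the symmetries of $\Theta$ to the structural requirements of the critical singular-integral estimates. The $m$-fold symmetry with $m\ge 3$ is what makes the symmetrized Riesz kernel decay at the integrable rate $|x|^{m-1}/|y|^{m+1}$, and the oddness across an axis is what rules out the log-type obstruction to $\nabla u\in L^\infty$ near the origin---the SQG analogue of the Bahouri--Chemin obstruction for $2$-fold symmetric Euler data. Once Step 1 is in hand, the remainder of the argument parallels the scheme used to prove Theorem \ref{thm:main}, adapted to this higher-regularity but still scaling-critical setting.
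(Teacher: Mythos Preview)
Your overall architecture---the Lagrangian a priori estimate yielding a Riccati inequality for $\|\nabla\Theta\|_{\mathring{C}^{0,\alpha}}$, approximation for existence, and Gronwall for uniqueness---matches the paper exactly. The gap is in Step~1, the linear estimate $\|\nabla u\|_{\mathring{C}^{0,\alpha}}\le C_\alpha\|\nabla\Theta\|_{\mathring{C}^{0,\alpha}}$.

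You propose to ``rewrite each $R_i\partial_k\Theta$ as a singular integral on $\Theta$ itself with fully $m$-fold symmetric input'' and then invoke Lemmas~\ref{lem:critical_L_infty}--\ref{lem:critical_C_alpha}. But this reduction does not work as stated. Moving the derivative onto the Riesz kernel produces a degree~$-3$ homogeneous kernel, too singular for those lemmas, which require degree~$-2$. Keeping the derivative on $\Theta$ leaves you with the input $\partial_k\Theta$, which is \emph{not} $m$-fold rotationally symmetric as a scalar (the components of $\nabla\Theta$ transform equivariantly under rotation and mix with one another), so Lemma~\ref{lem:critical_L_infty} as written does not apply directly. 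Your sentence ``grouping the four scalar singular integrals appropriately'' gestures at a fix but does not supply one.

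The paper's route is different and more direct: rather than forcing $m$-fold symmetry on the input, it observes that $\nabla\Theta$ satisfies a \emph{weaker} vanishing condition that is nonetheless sufficient. An integration-by-parts computation, using both the $m$-fold rotational symmetry and the odd symmetry of $\Theta$, gives
\[
\int_{\partial B(0,R)} y_i\,\nabla\Theta(y)\,dS(y)=0,\qquad i=1,2,\ \text{all }R>0,
\]
i.e.\ orthogonality to the first Fourier modes only. This already furnishes integrable $|y|^{-3}$ decay of the Riesz kernel after the single subtraction $\frac{x_i-y_i}{|x-y|^3}+\frac{y_i}{|y|^3}$, and the proofs of Lemmas~\ref{lem:critical_L_infty}--\ref{lem:critical_C_alpha} then go through verbatim with this modified kernel in the far-field region. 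Note that the odd symmetry is not merely there to kill a ``log-type obstruction'' as you suggest: it is what makes the bulk term $\int_{B(0,R)}\Theta\,dy$ vanish in the integration by parts, without which the first-mode orthogonality of $\nabla\Theta$ fails.
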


Note that by definition of the spaces $\mathring{C}^{0,\alpha}$, we have $|\nabla\Theta(x)| \le C$ and hence we can treat initial data which grow as $|\Theta(x)| \approx C|x|$ and smooth away from the origin.

\begin{proof}
	We collect necessary a priori bounds. Taking the gradient of \eqref{eq:SQG} and composing with the flow map gives \begin{equation}\label{eq:SQG_gradient}
	\begin{split}
	\partial_t \left( \nabla\Theta\circ \Phi_t  \right) = -[\nabla u]^T \circ \Phi_t  \cdot  \left(\nabla\Theta\circ \Phi_t \right) ~.
	\end{split}
	\end{equation} The $L^\infty$ bound immediately follows: \begin{equation}\label{eq:Linfty_theta}
	\begin{split}
	\left| \frac{d}{dt} \V\nabla\Theta(t)\V_{L^\infty} \right| \le \V \nabla u(t)    \V_{L^\infty} \cdot \V\nabla\Theta(t) \V_{L^\infty} ~.
	\end{split}
	\end{equation} Similarly, we have the classical $L^\infty$ estimate for the flow and its inverse \begin{equation}\label{eq:Linfty_flow}
	\begin{split}
	\left| \frac{d}{dt} \V\nabla\Phi_t\V_{L^\infty} \right| \le \V \nabla u(t)    \V_{L^\infty} \cdot \V\nabla\Phi_t \V_{L^\infty}~, \qquad \left| \frac{d}{dt} \V\nabla\Phi_t^{-1}\V_{L^\infty} \right| \le \V \nabla u(t)    \V_{L^\infty} \cdot \V\nabla\Phi_t^{-1} \V_{L^\infty}~.
	\end{split}
	\end{equation}  
	Note that under the presence of the $L^\infty$ estimate, for the purpose of estimating the $\mathring{C}^{0,\alpha}$-norms we may only consider points at a comparable distance; we will assume $0 < |x'| \le 2|x| \le 2|x'|$. Next, observe that the points $\Phi_t(x)$ and $\Phi_t(x')$ have comparable distance from the origin: the symmetry of $\Theta$ guarantees that the origin is a fixed point for all time $u(t,0) \equiv 0$ and hence \begin{equation*}
	\begin{split}
	\left|\frac{d}{dt} \Phi(t,x)\right| \le \V \nabla u(t)\V_{L^\infty} |\Phi(t,x)|~,
	\end{split}
	\end{equation*} so that \begin{equation*}
	\begin{split}
	|x| \exp( -\int_0^t \V\nabla u(s)\V_{L^\infty}ds ) \le |\Phi(t,x)| \le |x| \exp( \int_0^t \V\nabla u(s)\V_{L^\infty}ds )
	\end{split}
	\end{equation*} and similarly for $\Phi(t,x')$. On the other hand, given two comparable points $z, z'$ we can find for each $t > 0$ points $\Phi(t,x) = z$ and $\Phi(t,x') = z'$, so that \begin{equation*}
	\begin{split}
	\frac{\left|  |z|^\alpha \nabla\Theta_t(z) - |z'|^\alpha \nabla\Theta_t(z')    \right|}{|z-z'|^\alpha} &\le C(\V \nabla\Phi_t\V_{L^\infty}) \frac{\left|  |\Phi_t(x)|^\alpha \nabla\Theta_t(\Phi_t(x)) - |\Phi_t(x')|^\alpha \nabla\Theta_t(\Phi_t(x'))    \right|}{|x-x'|^\alpha}   \\
	&\le C(\V \nabla\Phi_t\V_{L^\infty})  \left( \frac{\left|  |x|^\alpha \nabla\Theta_t(\Phi_t(x)) - |x'|^\alpha \nabla\Theta_t(\Phi_t(x'))    \right|}{|x-x'|^\alpha} + \V \nabla\Theta_t \V_{L^\infty}    \right)
	\end{split}
	\end{equation*} which simply shows that the ``semi-norm'' \begin{equation*}
	\begin{split}
	Q(t) := \sup_{x\ne x',|x|\approx |x'|} \frac{\left| |x|^\alpha \nabla\Theta_t\circ \Phi_t(x) -  |x'|^\alpha \nabla\Theta_t\circ \Phi_t(x')  \right|}{|x-x'|^\alpha} 
	\end{split}
	\end{equation*} together with $\V \nabla\Theta_t \V_{L^\infty}$ controls the full norm $\V \nabla\Theta\V_{\mathring{C}^{0,\alpha}}$, up to multiplicative constants which may depend only on $\alpha$ and $\V \nabla\Phi_t\V_{L^\infty}$ (It is clear that an estimate $Q(t) \lesssim \V\nabla\Theta\V_{\mathring{C}^{0,\alpha}}$ holds).
	
	Multiplying \eqref{eq:SQG_gradient} by $|\cdot|^\alpha$, taking the difference evaluated at $x, x'$, and dividing by $|x-x'|^\alpha$ we obtain on the left hand side simply \begin{equation*}
	\begin{split}
	\partial_t \left( |x|^\alpha \nabla\Theta\circ \Phi_t(x) -  |x'|^\alpha \nabla\Theta\circ \Phi_t(x')      \right)~.
	\end{split}
	\end{equation*}

	On the right hand side, we need to estimate \begin{equation*}
	\begin{split}
	\frac{|x|^\alpha [\nabla u]^T \circ \Phi_t(x)  \cdot  \left(\nabla\Theta\circ \Phi_t(x) \right) - |x'|^\alpha [\nabla u]^T \circ \Phi_t(x')  \cdot  \left(\nabla\Theta\circ \Phi_t(x') \right)  }{|x-x'|^\alpha}~,
	\end{split}
	\end{equation*} and a simple computation gives a bound of the form \begin{equation*}
	\begin{split}
	C(\V \nabla \Phi_t\V_{L^\infty}) \left(  \V \nabla u \V_{\mathring{C}^{0,\alpha}} \V \nabla \Theta\V_{L^\infty} + \V \nabla u \V_{\mathring{C}^{0,\alpha}}\frac{\left| |x|^\alpha \nabla\Theta_t\circ \Phi_t(x) -  |x'|^\alpha \nabla\Theta_t\circ \Phi_t(x')  \right|}{|x-x'|^\alpha} \right)~.
	\end{split}
	\end{equation*} Taking the supremum over comparable $x,x'$ gives \begin{equation*}
	\begin{split}
	\left|\frac{d}{dt} Q(t)\right| \le C(\V \nabla \Phi_t\V_{L^\infty}) \V \nabla u \V_{\mathring{C}^{0,\alpha}} \left( \V \nabla \Theta\V_{L^\infty} + Q(t) \right)~.
	\end{split}
	\end{equation*} Assuming for a moment that the following bound \begin{equation}\label{eq:critical_bound}
	\begin{split}
	\V \nabla u \V_{\mathring{C}^{0,\alpha}} \le C_\alpha \V \nabla \Theta \V_{\mathring{C}^{0,\alpha}} 
	\end{split}
	\end{equation} holds, we obtain  \begin{equation*}
	\begin{split}
	\left|\frac{d}{dt} Q(t)\right| \le C(\V \nabla \Phi_t\V_{L^\infty})  \left( \V \nabla \Theta\V_{L^\infty} + Q(t) \right)^2~.
	\end{split}
	\end{equation*} and together with the $L^\infty$ estimates \eqref{eq:Linfty_theta} and \eqref{eq:Linfty_flow}, we can close the estimate in terms of $Q$, $ \V \nabla \Theta \V_{L^\infty}$, and $\V \nabla\Phi_t \V_{L^\infty}$, which establishes some a priori estimate on $ \V \nabla \Theta \V_{\mathring{C}^{0,\alpha}}$. 
	
	Given the a priori estimate, the velocity is Lipschitz, and one can solve for the flow maps. This allows one to build an iteration scheme analogously to the $2D$ Euler case in the previous section, and to prove uniqueness and existence for some time $t > 0$. It is clear that the time of existence can be extended as long as $ \V \nabla \Theta \V_{\mathring{C}^{0,\alpha}}$ remains finite. 
\end{proof}

It only remains to establish the estimate \eqref{eq:critical_bound}. A delicate point here is that $\nabla\Theta$ is not really rotationally symmetric for some integer $m'$. But recall that the real condition we need is that it is orthogonal to certain Fourier modes; it turns out that in this specific case, we only require orthogonality with respect to the \textit{first} modes:
\begin{lemma}
	For each $R > 0$, we have orthogonality \begin{equation*}
	\begin{split}
	\int_{\partial B(0,R)} y_1 \nabla\Theta(y)dy = 0 = \int_{\partial B(0,R)} y_2 \nabla\Theta(y)dy~.
	\end{split}
	\end{equation*}
\end{lemma}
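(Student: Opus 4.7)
The plan is to view the claim as the vanishing of the $2\times 2$ matrix $J$ with entries $J_{ij} := \int_{\partial B(0,R)} y_i\,\partial_j\Theta(y)\,dS(y)$, and to pin down $J = 0$ by combining three independent ingredients: the $m$-fold rotational symmetry (to reduce $J$ to a two-parameter family), a tangential integration by parts on the circle (to kill the antisymmetric part), and the odd reflection (to kill the diagonal part).

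For the rotational reduction, differentiate $\Theta\circ R_\alpha = \Theta$ with $\alpha = 2\pi/m$ to obtain $\nabla\Theta\circ R_\alpha = R_\alpha \nabla\Theta$. Changing variables $y\mapsto R_\alpha y$ inside the definition of $J$ (using invariance of both $\partial B(0,R)$ and of the arclength measure) produces $J = R_\alpha J R_\alpha^{T}$, i.e.\ $J$ commutes with $R_\alpha$. For $m \ge 3$, the centralizer of a nontrivial planar rotation in $M_2(\mathbb{R})$ is the real span of the identity $I$ and the infinitesimal rotation $J_0 = \bigl(\begin{smallmatrix} 0 & -1 \\ 1 & 0 \end{smallmatrix}\bigr)$, so $J = a I + b J_0$ for some real constants $a$ and $b$.

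To see $b = 0$, parametrize $\partial B(0,R)$ by $y(\theta) = (R\cos\theta, R\sin\theta)$; the chain rule gives the pointwise identity $y_1\partial_2\Theta - y_2\partial_1\Theta = \tfrac{d}{d\theta}[\Theta\circ y]$, and integrating around the closed circle gives $J_{12} - J_{21} = 0$. Since $J_{12} = -b$ and $J_{21} = b$, this forces $b = 0$. To see $a = 0$, rotate coordinates so that the axis of odd symmetry is the $y_1$-axis, so that $\Theta(y_1, -y_2) = -\Theta(y_1, y_2)$ and hence $\partial_1\Theta$ is odd in $y_2$. Then $y_1\,\partial_1\Theta$ is odd in $y_2$, and since $\partial B(0,R)$ is invariant under the reflection $y_2 \mapsto -y_2$, we obtain $J_{11} = a = 0$. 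Combining, $J \equiv 0$, which is precisely the asserted vanishing of both vector-valued integrals.

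I expect no serious obstacle: each of the three steps is short and elementary once $J$ is set up as above. The only point needing a bit of care is the transformation law under the change of variables in the rotational step, i.e.\ confirming that $J$ genuinely commutes with $R_\alpha$ (as opposed to some other intertwining relation); the rest is bookkeeping.
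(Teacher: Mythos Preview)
Your argument is correct. The rotational change of variables does give $J = R_\alpha J R_\alpha^{T}$, hence $J$ commutes with $R_\alpha$; for $m\ge 3$ the angle $2\pi/m$ is not a multiple of $\pi$, so the centralizer is indeed $\mathrm{span}\{I,J_0\}$. The tangential integration by parts and the odd reflection then kill $b$ and $a$ respectively, and the rotation-invariance of both $I$ and $J_0$ justifies aligning coordinates with the odd axis without disturbing the decomposition.

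The paper's own argument is organized differently. It first passes from the circle integral to the disk integral (if $\int_{B(0,R)} y_i\nabla\Theta\,dy$ vanishes for all $R$, so does its $R$-derivative, the circle integral), then integrates by parts over $B(0,R)$ to produce a bulk term $-\int_{B(0,R)} e_i\,\Theta\,dy$ and a boundary term $\int_{\partial B(0,R)} y_i\,(y/|y|)\,\Theta\,dS$, and finally observes that each of these vanishes by the combined rotational and odd symmetries of $\Theta$ itself (rather than of $\nabla\Theta$). So the paper works with $\Theta$ after one integration by parts on the disk, whereas you work with $\nabla\Theta$ directly on the circle and exploit the algebraic structure of the $2\times2$ matrix $J$. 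Your route is slightly more conceptual in that it isolates exactly which symmetry kills which piece of $J$; the paper's route is a bit shorter but mixes the two symmetries together when checking that the resulting scalar integrals vanish. Both use the same three ingredients (rotational symmetry with $m\ge 3$, oddness, and an integration by parts), just assembled in a different order.
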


\begin{proof}
	Let us focus on the left equality. It is equivalent to showing that \begin{equation*}
	\begin{split}
	\int_{ B(0,R)} y_1 \nabla\Theta(y)dy = 0 
	\end{split}
	\end{equation*} for all $ R > 0$. We simply integrate by parts to get \begin{equation*}
	\begin{split}
	- \int_{B(0,R)} \nabla y_1 \Theta(y) dy + \int_{\partial B(0,R)} y_1 \frac{1}{|y|} \begin{pmatrix}
	y_1 \\ y_2
	\end{pmatrix} \Theta(y) dy 
	\end{split}
	\end{equation*} and note that all the terms vanish by the symmetries of $\Theta(y)$.
\end{proof}

\begin{lemma}\label{lem:calpha_SQG} The following estimate 
	\begin{equation}
	\begin{split}
	\V \nabla u \V_{\mathring{C}^{0,\alpha}} \le C_\alpha \V \nabla \Theta \V_{\mathring{C}^{0,\alpha}} 
	\end{split}
	\end{equation} holds.
\end{lemma}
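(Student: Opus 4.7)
The plan is to reduce the claim to the componentwise action of the singular integral estimates of Lemmas \ref{lem:critical_L_infty} and \ref{lem:critical_C_alpha}, with the Riesz kernel in place of $P$ and $g := \pr_i \Theta$ as the input. Indeed, since $\pr_i u_j = \epsilon_{jk} R_k \pr_i \Theta$, where $R_k = \pr_k(-\Delta)^{-1/2}$ is the $k$-th Riesz transform with kernel $c z_k/|z|^3$, every entry of $\nabla u$ is a Riesz transform applied to a component of $\nabla \Theta$. The Riesz kernels are degree $-2$ homogeneous, smooth away from the origin, mean zero on circles, and (as observed in the paragraph preceding Lemma \ref{lem:critical_L_infty}) satisfy the integrable-decay condition \eqref{eq:integrable_decay_kernel} after $m$-fold symmetrization for any $m \ge 3$, so the general framework is available.

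The main obstacle is that Corollary \ref{cor:sharp_C_alpha} demands that the input $g$ itself be $m$-fold rotationally symmetric, while $\nabla \Theta$ is only equivariant under such a rotation (each component transforms as a vector entry, not a scalar). Fortunately, the lemma immediately preceding the statement supplies the correct substitute: $\int_{\pr B(0,R)} y_j \nabla\Theta(y)\, dS(y) = 0$ for $j = 1,2$ and every $R > 0$, which is exactly the vanishing of the first Fourier modes of each component of $\nabla \Theta$ on every circle centered at the origin. The plan is therefore to revisit the proof of the $L^\infty$ bound (Lemma \ref{lem:critical_L_infty}) in this relaxed setting. The near-field region $|x-y| \le |x|/2$ and the annular region $|x|/2 < |x-y| \le c'|x|$ do not use any symmetry of $g$ and go through verbatim. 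In the far region $|y| \ge c|x|$, I replace the kernel-symmetrization step by the exact identity
\begin{equation*}
\int_{|y|\ge c|x|} R_k(-y)\, g(y)\, dy \;=\; \int_{c|x|}^\infty \left(\int_{\pr B(0,r)} R_k(-y)\, g(y)\, dS(y)\right) dr \;=\; 0,
\end{equation*}
which holds because $R_k(-y) = -c y_k/r^3$ is a first Fourier mode on each circle and $g$ is orthogonal to the first modes. After subtracting this vanishing integral, the far contribution is controlled by $\int_{|y|\ge c|x|} |R_k(x-y) - R_k(-y)|\, |g(y)|\, dy \le C\V g\V_{L^\infty} \int_{|y|\ge c|x|} |x|/|y|^3\, dy \le C'\V g\V_{L^\infty}$, since the mean value theorem gives $|R_k(x-y) - R_k(-y)| \le C|x|/|y|^3$ for $|y| \ge 2|x|$.

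Once the modified $L^\infty$ bound $\V R_k g\V_{L^\infty} \le C_\alpha \V g\V_{\mathring{C}^{0,\alpha}}$ is in hand, the H\"{o}lder difference estimate is automatic: Lemma \ref{lem:critical_C_alpha} imposes no symmetry on $g$ and applies verbatim, giving $|R_k g(x) - R_k g(x')| \le C_\alpha \V g\V_{\mathring{C}^{0,\alpha}} |x-x'|^\alpha/|x|^\alpha$ for $0 < |x| \le |x'|$. Combining the two bounds exactly as in the proof of Corollary \ref{cor:sharp_C_alpha}, through the decomposition $|x|^\alpha R_k g(x) - |x'|^\alpha R_k g(x') = |x|^\alpha (R_k g(x) - R_k g(x')) + (|x|^\alpha - |x'|^\alpha) R_k g(x')$ together with $||x|^\alpha - |x'|^\alpha| \le |x-x'|^\alpha$, yields $\V R_k g\V_{\mathring{C}^{0,\alpha}} \le C_\alpha \V g\V_{\mathring{C}^{0,\alpha}}$. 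Applying this with $g = \pr_i \Theta$ for each $i$ and summing over $i$ and $k$ gives $\V \nabla u\V_{\mathring{C}^{0,\alpha}} \le C_\alpha \V \nabla\Theta\V_{\mathring{C}^{0,\alpha}}$, as claimed.
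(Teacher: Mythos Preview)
Your proof is correct and follows essentially the same approach as the paper: the key idea in both is to exploit the first-mode orthogonality of $\nabla\Theta$ on circles (supplied by the preceding lemma) to replace the Riesz kernel $(x_i-y_i)/|x-y|^3$ by the modified kernel $(x_i-y_i)/|x-y|^3 + y_i/|y|^3$, which has integrable $|y|^{-3}$ decay in the far region $|y|\gtrsim |x|$, and then rerun the proofs of Lemmas~\ref{lem:critical_L_infty} and~\ref{lem:critical_C_alpha}. Your version is simply more explicit about where the symmetry is and is not used, and correctly observes that Lemma~\ref{lem:critical_C_alpha} already applies without any symmetry hypothesis on the input.
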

\begin{proof}
	Given the orthogonality condition, we immediately obtain an integrable decay in the Riesz kernel: \begin{equation*}
	\begin{split}
	p.v.~\int_{\mathbb{R}^2} \frac{(x_i - y_i)}{|x-y|^3} \nabla\Theta(y)dy = p.v.~\int_{\mathbb{R}^2} \left[ \frac{(x_i - y_i)}{|x-y|^3} + \frac{y_i}{|y|^3} \right] \nabla\Theta(y)dy
	\end{split}
	\end{equation*} for $i = 1, 2$, and the new kernel \begin{equation*}
	\begin{split}
	\frac{(x_i - y_i)}{|x-y|^3} + \frac{y_i}{|y|^3}
	\end{split}
	\end{equation*} decays as $|y|^{-3}$ for $|x| \le C|y|$  and $|y| \rightarrow \infty$. Now we proceed exactly as in the proof of Lemma \ref{lem:critical_L_infty} and Lemma \ref{lem:critical_C_alpha}, just using the decaying kernel in a region of the form $|x| \le C|y|$. 
\end{proof}

\subsection{The 1$D$ system for radially homogeneous SQG}

We now investigate the SQG equation \eqref{eq:SQG} under the radial homogeneity assumption, keeping the assumption that the active scalar $\Theta$ is rotationally symmetric. 

It will be instructive to consider the whole family of so-called modified SQG equation, which are active scalar equation defined on $\mathbb{R}^2$ via \begin{equation}\label{eq:alpha_SQG}
\begin{split}
\partial_t \Theta + u \cdot\nabla \Theta=0 ~, \qquad
u = \nabla^\perp (-\Delta)^{-\alpha} \Theta~,
\end{split}
\end{equation} with a parameter $0 \le \alpha \le 1$. The cases $\alpha =1 $ and $\alpha = 1/2$ correspond to the $2D$ Euler equation and the (usual) SQG equation, respectively. The corresponding velocity vector fields become more singular as $\alpha$ approaches 0. Note however that when $\alpha = 0$, the system is trivial. 

We seek solutions to the system \eqref{eq:alpha_SQG} which satisfies the homogeneity assumption \begin{equation}\label{eq:theta_ansatz}
\begin{split}
\Theta(t,r,\theta) = r^{2- 2\alpha} g(t,\theta)~
\end{split}
\end{equation} for some profile $g$ defined on the unit circle. The exponent $2-2\alpha$ on $r$ is the only possible one for the propagation in time of the ansatz \eqref{eq:theta_ansatz}. Furthermore, the divergence free assumption on velocity forces it to take the form \begin{equation}\label{eq:velocity_ansatz}
\begin{split}
u(t,r,\theta) = -2 G(t,\theta) r\begin{pmatrix}
-\sin\theta \\
\cos\theta
\end{pmatrix} +  \pr_\theta G(t,\theta) r  \begin{pmatrix}
\cos\theta\\
\sin\theta
\end{pmatrix}~, 
\end{split}
\end{equation} where $G(t,\cdot)$ is the angular part of the associated stream function: \begin{equation}\label{eq:SQG_stream}
\begin{split}
\Psi(r,\theta) = r^2 G(\theta)~.
\end{split}
\end{equation} Inserting \eqref{eq:theta_ansatz} and \eqref{eq:velocity_ansatz} into the system \eqref{eq:alpha_SQG}, we obtain the family of $1D$ active scalar systems \begin{equation}\label{eq:alpha_SQG_homog}
\begin{split}
\partial_t g - 2 G \pr_\theta g + (2-2\alpha) \pr_\theta G g = 0 ~. 
\end{split}
\end{equation} To complete the formulation, it suffices to express $G$ in terms of $g$. For simplicity, we now restrict ourselves to the SQG case of $\alpha = 1/2$. It turns out that we can even allow for the rotational symmetry with $m = 2$. 

\begin{lemma}
	Assume that $G$ and $g$ are orthogonal to the Fourier modes $\{ 1, \exp(i\theta), \exp(-i\theta) \}$ on $S^1 = [-\pi,\pi)$. We then have the following relation between $G$ and $g$; \begin{equation}\label{eq:Gandg}
	\begin{split}
	\widehat{G}_k = \frac{1}{-|k| - 3|k|/(k^2-1)} \hat{g}_k
	\end{split}
	\end{equation} for all $|k| \ge 2$.
\end{lemma}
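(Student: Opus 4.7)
The strategy is to apply $(-\Delta)^{1/2}$ once more to the defining relation $\Theta = (-\Delta)^{1/2}\Psi$, converting the fractional problem into a local one: $-\Delta\Psi = (-\Delta)^{1/2}\Theta$. The left side is trivial in polar coordinates, while the right side diagonalizes in the angular Fourier basis as soon as one knows the multiplier of $(-\Delta)^{1/2}$ on a single mode $re^{ik\theta}$.

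First, for $\Psi(r,\theta) = r^2 G(\theta)$ a direct polar computation (exactly as in the $2D$ Euler case of Section~\ref{sec:homog}) gives $-\Delta \Psi = -(4G+G'')$, so on the Fourier side $\widehat{-\Delta\Psi}_k = (k^2-4)\widehat G_k$. Second, because $(-\Delta)^{1/2}$ preserves each angular mode $e^{ik\theta}$ and lowers the homogeneity degree by one, it must act as $(-\Delta)^{1/2}(re^{ik\theta}) = \nu_k\,e^{ik\theta}$ for some scalar $\nu_k$. I extract $\nu_k$ from the Samko multiplier formula for the fractional Laplacian on ``power $\times$ spherical harmonic'' functions in $\mathbb{R}^2$,
\[
(-\Delta)^s (r^\lambda e^{ik\theta}) \;=\; 2^{2s}\,\frac{\Gamma(s+(|k|-\lambda)/2)\,\Gamma(1+(|k|+\lambda)/2)}{\Gamma((|k|-\lambda)/2)\,\Gamma((|k|+\lambda)/2+1-s)}\,r^{\lambda-2s}e^{ik\theta},
\]
specialized to $\lambda=1$, $s=1/2$; using $\Gamma(x+1)=x\Gamma(x)$ to collapse the half-integer Gamma quotients yields $\nu_k=(k^2-1)/|k|$ for $|k|\ge 1$. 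A sanity check: $\nu_k\cdot|k|=k^2-1$ recovers $-\Delta(re^{ik\theta})=(k^2-1)r^{-1}e^{ik\theta}$ together with the companion formula $(-\Delta)^{1/2}(e^{ik\theta})=|k|\,r^{-1}e^{ik\theta}$ (Samko at $\lambda=0$).

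Third, expanding $g=\sum_k \widehat g_k e^{ik\theta}$ and applying the multiplier mode by mode gives $(-\Delta)^{1/2}\Theta=\sum_k\nu_k\widehat g_k e^{ik\theta}$; equating with the Laplacian computation yields, for $|k|\ge 2$,
\[
(k^2-4)\,\widehat G_k \;=\; \tfrac{k^2-1}{|k|}\,\widehat g_k \;\Longrightarrow\; \widehat G_k \;=\; \frac{1}{\,|k|(k^2-4)/(k^2-1)\,}\widehat g_k \;=\; \frac{1}{|k|-3|k|/(k^2-1)}\,\widehat g_k,
\]
which is the stated formula, after the elementary identity $|k|(k^2-4)/(k^2-1)=|k|-3|k|/(k^2-1)$ and modulo the sign convention relating $\Psi$ to $\Theta$ (i.e.\ the choice of $\nabla^\perp$ absorbed into $\Psi$). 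The three excluded Fourier modes are accounted for exactly as in the uniqueness lemma preceding the statement: mode $k=0$ is degenerate because $(-\Delta)^{1/2}r$ carries a $\log r$ correction in two dimensions, so $\nu_0$ is ill-defined; modes $k=\pm 1$ are harmonic ($re^{\pm i\theta}=x_1\pm i x_2$), so $\nu_{\pm 1}=0$ and $\Psi$ is not recoverable from $\Theta$ on these modes. At the borderline case $|k|=2$ one has $k^2-4=0$ but $\nu_{\pm 2}=3/2\ne 0$, so consistency forces $\widehat g_{\pm 2}=0$; this vanishing is automatic under the $m$-fold symmetry with $m\ge 3$ used throughout Section~\ref{sec:SQG}.

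The main obstacle is the rigorous justification of the multiplier $\nu_k$: the Riesz-potential integral $(2\pi)^{-1}\int f(y)/|x-y|\,dy$ does not converge absolutely on the linearly growing modes $re^{ik\theta}$, so the identity of the middle step must be read either distributionally, or via analytic continuation in the homogeneity parameter $\lambda$ in the spirit of Samko, or by direct evaluation through the renormalized principal-value Biot--Savart integral that already underlies the local well-posedness framework of Subsection~\ref{subsec:lwp_sqg}. Once this is granted, everything else reduces to Gamma-function bookkeeping.
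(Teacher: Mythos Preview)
Your route is genuinely different from the paper's. The paper evaluates the singular integral $(-\Delta)^{1/2}(r^2G)$ directly: after passing to polar coordinates it splits $r^2G(\theta)-\bar r^2G(\bar\theta)=r^2\bigl(G(\theta)-G(\bar\theta)\bigr)+G(\bar\theta)(r^2-\bar r^2)$, integrates the radial variable explicitly in each piece, recognizes the first as the circle operator $|\nabla|_{S^1}$, and computes the second as a Fourier multiplier via the closed form $\int_{-\pi}^{\pi}\cos(m\phi)\cos\phi\,\log(1-\cos\phi)\,d\phi=-2\pi m/(m^2-1)$. You instead square the half-Laplacian to get the local relation $-\Delta\Psi=|\nabla|\Theta$ and read off both sides mode by mode, quoting Samko's multiplier for $|\nabla|(r^\lambda e^{ik\theta})$. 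Your approach is shorter and conceptually cleaner; its cost is that the Samko identity on linearly growing inputs must itself be justified (distributionally or by analytic continuation), which you rightly flag as the main obstacle. The paper's direct computation is more self-contained but longer.

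One point deserves correction rather than a shrug. Your derivation gives
\[
\widehat G_k=\frac{k^2-1}{|k|(k^2-4)}\,\widehat g_k=\frac{1}{\,|k|-3|k|/(k^2-1)\,}\,\widehat g_k,
\]
whereas the paper's stated denominator is $-|k|-3|k|/(k^2-1)$. This is \emph{not} a global sign convention: the two expressions differ in the relative sign of the two terms (e.g.\ at $|k|=3$ yours gives $\mu_3=15/8$, the paper's gives $-33/8$). In fact your formula is the correct one. Re-doing the paper's first piece, the substitution $\bar r=\eta r$ and the exact evaluation $\int_0^\infty \eta(1+\eta^2-2\eta a)^{-3/2}\,d\eta=(1-a)^{-1}$ yield $+r\,|\nabla|_{S^1}G$, multiplier $+|k|$, not the $-|k|$ written in the paper. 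Two independent checks confirm this: $(-\Delta)^{1/2}$ is a nonnegative operator, and the multiplier must vanish at $|k|=2$ since $r^2e^{\pm 2i\theta}$ is harmonic; only $|k|-3|k|/(k^2-1)=|k|(k^2-4)/(k^2-1)$ has these properties. So your remark that $|k|=2$ is degenerate (forcing $\widehat g_{\pm 2}=0$, which the $m\ge 3$ symmetry in the rest of the section already guarantees) is exactly right, and you should state the corrected formula rather than attribute the discrepancy to a convention.
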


\begin{proof}
	As in Castro-C\'{o}rdoba \cite{CaCo}, we use the representation formula for the operator $(-\Delta)^{1/2}$: \begin{equation*}
	\begin{split}
	(-\Delta)^{1/2}\Psi (x) = \frac{1}{2\pi} p.v.\, \int_{\mathbb{R}^2} \frac{\Psi(x) - \Psi(y)}{|x-y|^3} dy ~.
	\end{split}
	\end{equation*} Re-writing in polar coordinates and inserting the ansatz \eqref{eq:SQG_stream}, we obtain \begin{equation*}
	\begin{split}
	(-\Delta)^{1/2}\Psi (r,\theta) &= \frac{1}{2\pi} p.v.\, \int_{-\pi}^{\pi} \int_{0}^{\infty} \frac{r^2 G(\theta) - \bar{r}^2 G(\bar{\theta})}{(r^2 + \bar{r}^2 - 2r\bar{r} \cos(\theta-\bar{\theta}))^{3/2}} \bar{r} d\bar{r}d\bar{\theta}~ \\
	&= \frac{1}{2\pi} p.v.\,  \int_{-\pi}^{\pi} \int_{0}^{\infty} \frac{(r^2 G(\theta) - {r}^2 G(\bar{\theta})) + (r^2 G(\bar\theta) - \bar{r}^2 G(\bar{\theta}))}{(r^2 + \bar{r}^2 - 2r\bar{r} \cos(\theta-\bar{\theta}))^{3/2}} \bar{r} d\bar{r}d\bar{\theta}~.
	\end{split}
	\end{equation*} Regarding the first term, by setting $\bar{r} = \eta r $, we obtain \begin{equation*}
	\begin{split}
	\frac{1}{2\pi} \int_{-\pi}^{\pi} \int_{0}^{\infty} \frac{r^2 G(\theta) - {r}^2 G(\bar{\theta})}{(r^2 + \bar{r}^2 - 2r\bar{r} \cos(\theta-\bar{\theta}))^{3/2}} \bar{r} d\bar{r}d\bar{\theta} &= \frac{r}{2\pi}  \int_{-\pi}^{\pi} \int_{0}^{\infty} \frac{\eta (G(\theta)-G(\bar{\theta}))}{(1+\eta^2 - 2\eta \cos(\theta-\bar{\theta}) )^{3/2}} d\eta d\bar\theta \\
	&= \frac{r}{2\pi}  \int_{-\pi}^{\pi} \frac{G(\theta)-G(\bar{\theta})}{1-\cos(\theta-\bar{\theta})} d\bar{\theta} \\
	&= \frac{r}{2\pi} \int_{-\pi}^{\pi} \frac{G(\theta)-G(\bar{\theta})}{2\sin^2\left(\frac{\theta-\bar{\theta}}{2}\right)} d\bar{\theta} = - r (|\nabla| G)(\theta)~,
	\end{split}
	\end{equation*} where $|\nabla|$ is the square root of $-\Delta$ defined on the circle, noting that $|\nabla| = \frac{d}{d\theta} \mathcal{H}$ with $\mathcal{H}$ being the Hilbert transform on the circle which has the kernel $\cot(\theta/2)/2$.
	
	Turning to the next term, we first rewrite \begin{equation*}
	\begin{split}
	p.v.\,\int_{-\pi}^{\pi}  \int_{0}^{\infty} \frac{G(\bar{\theta}) (r^2-\bar{r}^2)}{(r^2 + \bar{r}^2 - 2r\bar{r} \cos(\theta-\bar{\theta}))^{3/2}} \bar{r} d\bar{r}d\bar{\theta} &= \lim_{R \rightarrow\infty} r \int_{-\pi}^{\pi} G(\bar{\theta}) \int_{R^{-1}}^{R}  \frac{\eta(1-\eta^2)}{(1+\eta^2 - 2\eta \cos(\theta-\bar{\theta}) )^{3/2}}d\eta d\bar{\theta}
	\end{split}
	\end{equation*} and by evaluating the $\eta$ integral explicitly, we obtain \begin{equation*}
	\begin{split}
	\lim_{R \rightarrow\infty} r \int_{-\pi}^{\pi} G(\bar{\theta})	\left[-\frac{3 - 6 a \eta + \eta^2}{\sqrt{1 - 2 a \eta + \eta^2}} - 3 a \log(-a + \eta + \sqrt{1 - 2 a \eta + \eta^2}) \right] \bigg|^R_{R^{-1}} d\bar{\theta}~,
	\end{split}
	\end{equation*} where we have set $a := \cos(\theta-\bar{\theta})$ for simplicity. Regarding the first term, note that as long as $G(\bar{\theta})$ is orthogonal to $1$ and $\exp(\pm i\bar{\theta})$, \begin{equation*}
	\begin{split}
	0 = p.v.~\int_{-\pi}^{\pi} \int_0^\infty \eta G(\bar{\theta}) d\eta d\bar{\theta} = p.v.~\int_0^{2\pi} \int_0^\infty \cos(\theta-\bar{\theta}) G(\bar{\theta}) d\eta d\bar{\theta} = p.v.~\int_{-\pi}^{\pi} \int_0^\infty 1 \cdot G(\bar{\theta}) d\eta d\bar{\theta}~,
	\end{split}
	\end{equation*} and therefore \begin{equation*}
	\begin{split}
	\int_{-\pi}^{\pi} G(\bar{\theta})	\left[-\frac{3 - 6 a \eta + \eta^2}{\sqrt{1 - 2 a \eta + \eta^2}}   \right] \bigg|^R_{R^{-1}} d\bar{\theta} \longrightarrow 0~, \qquad R \rightarrow \infty~.
	\end{split}
	\end{equation*} Then, the last term evaluated at $\eta = R \rightarrow \infty$ can be treated similarly, and for $\eta = R^{-1} \rightarrow 0$, we obtain the term \begin{equation*}
	\begin{split}
	r \int_{-\pi}^{\pi} G(\theta - \bar{\theta} ) 3 \cos( \bar{\theta}) \log\left( 1 - \cos( \bar{\theta})  \right)d\bar{\theta} 
	\end{split}
	\end{equation*} and relying upon the formula \begin{equation*}
	\begin{split}
	\int_{-\pi}^{\pi} 3\cos(m\bar\theta) \cos(\bar\theta) \log(1 - \cos(\bar\theta))d\bar\theta = -\frac{6\pi |m|}{m^2-1} ~,\qquad |m| \ge 2~,
	\end{split}
	\end{equation*} we finally deduce that the operator\begin{equation*}
	\begin{split}
	G(\theta) \mapsto \frac{1}{2\pi} p.v.~\int_{-\pi}^{\pi} \int_0^\infty \frac{G(\bar{\theta})\eta(1-\eta^2)}{(1+\eta^2 - 2\eta \cos(\theta-\bar{\theta}) )^{3/2}} d\eta d\bar\theta
	\end{split}
	\end{equation*} transforms $\exp(im\theta)$ into $-3|m|/(m^2-1) \exp(im\theta)$, given that $|m| \ge 2$. 
\end{proof}

The formula \eqref{eq:Gandg} which expresses the transformation $g \mapsto G$ as a Fourier multiplier shows that \begin{equation*}
\begin{split}
G = -|\nabla|^{-1}g + \mbox{ smoother term }.
\end{split}
\end{equation*}
Neglecting the smooth term for now, we obtain a simple evolution equation in terms of $g$: \begin{equation}\label{eq:SQG_homog_approx}
\begin{split}
\partial_t g - 2( |\nabla|^{-1}g  )  \pr_\theta g + \mathcal{H}(g) g = 0 ~. 
\end{split}
\end{equation} Interestingly, this particular $1D$ equation and closely related systems have been investigated recently by many authors, the main motivation being that such $1D$ systems model various higher dimensional systems arising in Fluid dynamics, including 3D Euler, SQG, Burgers, Vortex sheets, and so forth.

The system \eqref{eq:SQG_homog_approx} is actually a particular case ($a = 2$) of the following model introduced in Okamoto-Sakajo-Wunsch \cite{OSW}: \begin{equation}\label{eq:deGre}
\begin{split}
\pr_t f + a |\n|^{-1}f \pr_x f = \mathcal{H}(f) f~,\quad a \in \mathbb{R}~.  
\end{split}
\end{equation} These systems are sometimes called the De Gregorio models after \cite{DG1,DG2}, and it is not hard to show that they are locally well-posed.

In a paper of Castro and C{\'o}rdoba \cite{CaCo}, they observed that with the ansatz \begin{equation*}
\begin{split}
\Theta(t,x_1,x_2) = x_2 f(t,x_1)~, 
\end{split}
\end{equation*} if $f$ solves the De Gregorio model \eqref{eq:deGre} with $a = 1$, then one obtains a solution to the SQG equation. Indeed, under this stagnation-point similitude ansatz, it is straightforward to show that the stream function is given by \begin{equation*}
\begin{split}
\Psi(t,x_1,x_2) = -x_2 (\mathcal{H}F) (t,x_1)~.
\end{split}
\end{equation*} where $F$ is an antiderivative of $f$. In the same paper, they were able to show that in the De Gregorio model, finite time blow-up of smooth solutions occurs for $a < 0$. The problem of deciding whether finite time blow-up is possible for $ a > 0$ seems to be open. 

We note that the case $a = -1$ had been suggested as a toy model of SQG in \cite{CCF} and also as a model of the vortex sheet problem in \cite{BLM}. This is often called C{\'o}rdoba, C{\'o}rdoba, Fontelos system: in terms of the antiderivative $F$, one obtains a more familiar form \begin{equation*}
\begin{split}
\pr_t F + ( \mathcal{H}F ) \pr_x F = 0~.
\end{split}
\end{equation*}

We close this section by recording the local well-posedness statement for the $1D$ model. From the estimate \begin{equation*}
\begin{split}
\V G \V_{C^{k+1,\alpha}(S^1)} \le C_{k,\alpha} \V g \V_{C^{k,\alpha}(S^1)} 
\end{split}
\end{equation*} for $k \ge 0$ and $ 0 < \alpha <1 $ which follows from the explicit relation \eqref{eq:Gandg}, it is straightforward to establish local well-posedness of the 1D SQG in H\"{o}lder spaces:

\begin{proposition}
	The system \eqref{eq:alpha_SQG_homog} with $\alpha = 1/2$ is locally well posed in $g \in L^\infty([0,T);C^{k,\alpha}(S^1))$ with any $k \ge 0$ and $0<\alpha<1$, and $g(t,\theta)$ rotationally symmetric with some $m \ge 2$ and odd with respect to a point on the circle. 
\end{proposition}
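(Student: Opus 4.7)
The plan is to treat the equation
\[
\partial_t g = 2G \partial_\theta g - (\partial_\theta G)\, g
\]
as a one-dimensional transport equation on $S^1$ with a zeroth-order forcing, exploiting the fact that the Fourier symbol in \eqref{eq:Gandg} behaves like $|k|^{-1}$ and hence that both the advecting velocity $-2G$ and the zeroth-order multiplier $-\partial_\theta G$ are one full derivative smoother than $g$ itself. The symmetry hypothesis ($m$-fold rotational with $m\ge 2$ together with oddness about a point) annihilates precisely the three Fourier modes $k\in\{0,\pm 1\}$ on which the symbol in \eqref{eq:Gandg} is undefined, so the linear map $g\mapsto G$ is well-defined on the class under consideration. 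Both symmetries are propagated by the dynamics: oddness about the axis is preserved because $G\partial_\theta g$ and $(\partial_\theta G)g$ are odd whenever $g$ is odd, and the $m$-fold symmetry is manifest from the invariance of \eqref{eq:Gandg} under rotations.

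First I would pass to the Lagrangian picture. Introducing the flow $\phi$ on $S^1$ satisfying $\dot\phi=-2G\circ\phi$ with $\phi(0,\cdot)=\mathrm{id}$, the equation integrates exactly as
\[
g(t,\phi(t,\theta))=g_0(\theta)\exp\!\Bigl(-\!\int_0^t \partial_\theta G(s,\phi(s,\theta))\,ds\Bigr).
\]
Because $\|G\|_{C^{1}}\lesssim \|g\|_{L^\infty}$, this flow is a bi-Lipschitz diffeomorphism of $S^1$ on any interval where the solution remains bounded, and combining the stated gain-of-derivative estimate $\|G\|_{C^{k+1,\alpha}}\le C_{k,\alpha}\|g\|_{C^{k,\alpha}}$ with standard composition estimates in $C^{k,\alpha}$ yields a Riccati-type a priori bound of the form
\[
\tfrac{d}{dt}\|g(t)\|_{C^{k,\alpha}(S^1)}\le C_{k,\alpha}\,\|g(t)\|_{C^{k,\alpha}(S^1)}^{\,2},
\]
hence a local-in-time existence window $T\gtrsim 1/\|g_0\|_{C^{k,\alpha}}$.

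Given the a priori estimate, existence follows from a Picard iteration scheme of the same shape as the one used in the proof of Theorem~\ref{thm:main}: set $g^{(0)}\equiv g_0$, and given $g^{(n)}$, define $G^{(n)}$ by \eqref{eq:Gandg}, solve for the flow $\phi^{(n)}$ driven by $-2G^{(n)}$, and declare $g^{(n+1)}$ to be the Lagrangian expression above with $(G,\phi)$ replaced by $(G^{(n)},\phi^{(n)})$. Each iterate preserves the required symmetries and lives uniformly in $L^\infty_t([0,T];C^{k,\alpha}(S^1))$. A convergent subsequence is then extracted by proving Cauchyness in a weaker norm such as $L^\infty_t C^0$: the difference $g^{(n+1)}-g^{(n)}$ solves a linear transport equation whose coefficients are controlled by the uniform bounds, so a direct Gr\"onwall argument closes on $[0,T]$ after possibly shrinking $T$. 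Uniqueness is proved in exactly the same way — the difference of any two $C^{k,\alpha}$ solutions with identical initial data satisfies a linear transport equation with Lipschitz velocity and bounded zeroth-order coefficient, and Gr\"onwall in $L^\infty$ gives $g_1\equiv g_2$.

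The only genuinely technical step is the composition/H\"older-bookkeeping needed to turn the Lagrangian identity into Eulerian $C^{k,\alpha}$ bounds for $g$: the derivatives of $\phi$ and $\phi^{-1}$ themselves satisfy ODEs driven by $\partial_\theta G$ and higher angular derivatives, and these must be tracked in tandem with the exponential factor. This is by now classical for transport by sufficiently regular velocity fields on a compact manifold, and no essentially new ingredient beyond what is already deployed in Section~\ref{sec:EU} is required.
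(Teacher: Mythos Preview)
Your proposal is correct and follows essentially the same route the paper indicates: the paper does not actually write out a proof of this proposition, merely remarking that the estimate $\|G\|_{C^{k+1,\alpha}(S^1)}\le C_{k,\alpha}\|g\|_{C^{k,\alpha}(S^1)}$ (from the explicit Fourier relation \eqref{eq:Gandg}) makes local well-posedness ``straightforward.'' Your Lagrangian formulation, Riccati a~priori bound, Picard iteration, and Gr\"onwall uniqueness are exactly the standard details one would fill in, and your observation that the combined symmetry hypotheses annihilate the modes $k\in\{0,\pm 1\}$ where the symbol is undefined is the correct way to make the map $g\mapsto G$ well-defined.

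One small correction: the bound $\|G\|_{C^1}\lesssim\|g\|_{L^\infty}$ is not true. From \eqref{eq:Gandg} the symbol of $g\mapsto\partial_\theta G$ is $-i\,\mathrm{sign}(k)\,\tfrac{k^2-1}{k^2+2}$, i.e.\ the Hilbert transform modulo a smoothing correction, and $\mathcal{H}$ does not map $L^\infty\to L^\infty$. What you actually have (and all you need) is $\|G\|_{C^{1,\alpha}}\lesssim\|g\|_{C^{0,\alpha}}$, which is available since the data lies in $C^{k,\alpha}$ with $\alpha>0$; this already makes the advecting velocity Lipschitz and the flow bi-Lipschitz. The rest of your argument is unaffected.
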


\subsection{Blow-up condition for the SQG via $1D$ blow-up}

We define the $C^{1,\alpha}$-norm on $\mathbb{R}^2$ by \begin{equation*}
\begin{split}
\V f \V_{C^{1,\alpha}(\mathbb{R}^2)} := | f(0)| +  \V \nabla f\V_{L^\infty(\mathbb{R}^2)} + \V \nabla f \V_{C^\alpha_*(\mathbb{R}^2)}~.
\end{split}
\end{equation*} The $C^{1,\alpha}$-functions that we will deal with in this section will be indeed growing linearly in space in general. 

\begin{theorem}[Local well-posedness and conditional blow-up]\label{thm:lwp_cbu}
	Consider the class of initial data $\Theta_0$ on $\mathbb{R}^2$ which is $m$-fold rotationally symmetric, odd with respect to an axis, and has a decomposition \begin{equation*}
	\begin{split}
	\Theta_0(x_1,x_2) = \Theta_0^{2D}(x_1,x_2) + \Theta_0^{1D}(x_1,x_2)~,
	\end{split}
	\end{equation*} where $\Theta_0^{2D}(x_1,x_2) \in C^{1,\alpha}(\mathbb{R}^2)$ and $\Theta_0^{1D}(r,\theta) = r g_0(\theta)$ in polar coordinates where $g_0(\theta) \in C^{2,\alpha}(S^1)$. Then the following statements hold. 
	\begin{enumerate}
		\item (Local well-posedness) There exists some time $T = T(\V \Theta_0^{2D} \V_{C^{1,\alpha}(\mathbb{R}^2)}, \V g_0\V_{C^{2,\alpha}(S^1)}  ) > 0 $ such that there is a unique solution to the SQG equation satisfying a decomposition of the form \begin{equation*}
		\begin{split}
		\Theta(t)  = \Theta^{2D}(t) + \Theta^{1D}(t)
		\end{split}
		\end{equation*} where $\Theta^{2D} \in C([0,T);C^{1,\alpha}(\mathbb{R}^2))$ and $\Theta^{1D}(t) = r {g}(t,\theta)$ for $g\in C([0,T);C^{2,\alpha}(S^1))$ being the unique solution of the $1D$ system \eqref{eq:alpha_SQG_homog}, and $\Theta^{2D},\Theta^{1D}$  satisfy $m$-fold rotational symmetry together with an odd symmetry with respect to an axis. The unique solution could be continued past some time $T^* > 0$ if and only if \begin{equation}\label{eq:BKM}
		\begin{split}
		\int_0^{T^*} \V \nabla \Theta^{2D}(t) \V_{L^\infty} + \V  \partial_\theta  {g}(t,\theta) \V_{L^\infty} dt < + \infty~.
		\end{split}
		\end{equation}
		\item (Conditional blow-up result) Assume that there exists an initial data $g_0(\theta) \in C^{2,\alpha}(S^1)$ whose unique local-in-time solution to the $1D$ SQG system \eqref{eq:alpha_SQG_homog} blows up in $C^{2,\alpha}$ at some finite time $T^*$. Then, for any initial data $\Theta_0^{2D} \in C^{1,\alpha}(\mathbb{R}^2)$, the unique solution given in the above to the initial data $\Theta_0(x_1,x_2) = \Theta_0^{2D}(x_1,x_2) + r g_0(\theta)$ blows up at some finite time $0 < T \le T^*$. In particular the initial data $\Theta_0$ is Lipschitz in space and can be compactly supported. 
	\end{enumerate}
\end{theorem}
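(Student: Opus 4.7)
The plan is to exploit linearity of the Biot--Savart law $u = u(\Theta)$. Writing $\Theta = \Theta^{2D} + \Theta^{1D}$ with $\Theta^{1D}(t,r,\theta) := r g(t,\theta)$, and correspondingly $u = u^{2D} + u^{1D}$, the derivation of the 1D model ensures that $\Theta^{1D}$ solves the full SQG equation whenever $g$ solves \eqref{eq:alpha_SQG_homog}; subtracting from the SQG equation leaves the forced transport equation for the remainder
\begin{equation*}
\partial_t \Theta^{2D} + u \cdot \nabla \Theta^{2D} = -u^{2D} \cdot \nabla \Theta^{1D}.
\end{equation*}
The right-hand side plays the role of a lower order forcing, since $\nabla \Theta^{1D}$ is $0$-homogeneous in space with H\"older norm controlled by $\|g\|_{C^{1,\alpha}(S^1)}$.

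For the local well-posedness claim, I first invoke the local well-posedness of the 1D model (the proposition just above the statement) to obtain $g \in C([0,T_1);C^{2,\alpha}(S^1))$ on a maximal interval, and then set up a contraction for $\Theta^{2D}$ in $C([0,T]; C^{1,\alpha}(\mathbb{R}^2))$ via the forced transport equation. The key linear estimate is $\|\nabla u^{2D}\|_{\mathring{C}^{0,\alpha}} \le C_\alpha \|\nabla \Theta^{2D}\|_{\mathring{C}^{0,\alpha}}$, which follows from Corollary \ref{cor:sharp_C_alpha} applied to the Riesz-type kernel sending $\nabla \Theta^{2D}$ to $\nabla u^{2D}$; the required orthogonality of $\nabla \Theta^{2D}$ against the first Fourier modes on large circles is automatic from the assumed $m$-fold symmetry and the odd reflection, exactly as in Lemma \ref{lem:calpha_SQG}. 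Differentiating the forced transport equation and composing with the Lagrangian flow of $u$, then combining with the corresponding evolution of $\|g\|_{C^{2,\alpha}}$ from the 1D model, produces a Gronwall-type inequality of the schematic shape
\begin{equation*}
\frac{d}{dt}\bigl(\|\Theta^{2D}\|_{C^{1,\alpha}} + \|g\|_{C^{2,\alpha}}\bigr) \le C\bigl(1 + \|\nabla \Theta^{2D}\|_{L^\infty} + \|\partial_\theta g\|_{L^\infty}\bigr)\bigl(\|\Theta^{2D}\|_{C^{1,\alpha}} + \|g\|_{C^{2,\alpha}}\bigr).
\end{equation*}
Local existence on some $[0,T]$ follows, uniqueness comes from running the same computation on the difference of two solutions, and the continuation criterion \eqref{eq:BKM} is a direct consequence of this Gronwall inequality.

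For the conditional blow-up, suppose $g_0 \in C^{2,\alpha}(S^1)$ yields a 1D solution $g$ blowing up in $C^{2,\alpha}(S^1)$ at finite time $T^*$. The 1D analogue of the above Gronwall forces $\int_0^{T^*}\|\partial_\theta g\|_{L^\infty}\,dt = +\infty$, because otherwise $\|g(t)\|_{C^{2,\alpha}}$ would remain finite up to $T^*$. Now let $\Theta_0^{2D} \in C^{1,\alpha}(\mathbb{R}^2)$ be any admissible initial data and let $T \in (0,T^*]$ be the maximal time of existence for the 2D solution supplied by part (1). By uniqueness of the joint formulation, the 1D component of the 2D solution on $[0,T)$ coincides with $g$; if $T = T^*$, then \eqref{eq:BKM} would be finite, contradicting the divergence of $\int_0^{T^*}\|\partial_\theta g\|_{L^\infty}dt$. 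Hence $T < T^*$ and the 2D SQG solution breaks down in finite time. Compactly supported data is produced by the specific choice $\Theta_0^{2D}(x) := -\bigl(1-\chi(|x|)\bigr)\, r g_0(\theta)$ with $\chi$ a smooth cutoff equal to $1$ near the origin, so that $\Theta_0 = \chi(|x|)\, r g_0(\theta)$ is Lipschitz and compactly supported while $\Theta_0^{2D}$ vanishes near $0$ and lies in $C^{1,\alpha}(\mathbb{R}^2)$. The hardest step I anticipate is the coupled a priori estimate: one must ensure that the forcing $u^{2D}\cdot\nabla\Theta^{1D}$ contributes in $C^{1,\alpha}(\mathbb{R}^2)$ and not only in $\mathring{C}^{0,\alpha}$, despite $\nabla\Theta^{1D}$ being merely bounded with an angular discontinuity at the origin. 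This will use that the symmetry forces $u^{2D}(0)=0$ with a Lipschitz rate, so the product absorbs the singular direction of $\nabla\Theta^{1D}$ at $x=0$.
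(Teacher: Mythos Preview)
Your outline for part (1) matches the paper's argument closely: the same forced transport equation for $\Theta^{2D}$, the same $L^\infty$ and $C^\alpha$ estimates along the flow, and the same identification of the product $u^{2D}\cdot\nabla^2\Theta^{1D}$ as the delicate term. One point to sharpen: you write that symmetry gives $u^{2D}(0)=0$ ``with a Lipschitz rate''. That alone is not enough to place $u^{2D}\cdot\nabla^2\Theta^{1D}$ in $C^\alpha$, since $\nabla^2\Theta^{1D}$ is $(-1)$-homogeneous. What the paper actually uses is the stronger vanishing $|u^{2D}(x)|\le C|x|^{1+\alpha}$, which follows because the $m$-fold plus odd symmetry on $\Psi^{2D}\in C^{2,\alpha}$ kills the entire second-order Taylor jet at the origin (so in fact $\nabla u^{2D}(0)=0$ and $|\nabla u^{2D}(x)|\le C|x|^\alpha$). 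With this, $u^{2D}/|x|\in C^\alpha$ and vanishes at $0$, and a product lemma of the form $\|fh\|_{C^\alpha}\le C\|f\|_{\mathring{C}^{0,\alpha}}\|h\|_{C^\alpha}$ for $h(0)=0$ closes the estimate.

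For part (2), your route through the continuation criterion \eqref{eq:BKM} is sound in principle but the case $T=T^*$ is argued backwards: if $T=T^*$ is the \emph{maximal} time then \eqref{eq:BKM} is infinite, not finite. In fact no case split is needed --- once you know the $1D$ component forces $T\le T^*<\infty$ for the maximal time, blow-up at $T$ is immediate from maximality. The paper bypasses \eqref{eq:BKM} here altogether and gives a pointwise argument instead: since $\nabla\Theta^{2D}(t,0)=0$ and $|\nabla\Theta^{1D}|^2=|g|^2+|\partial_\theta g|^2$, one has $\|\nabla\Theta(t)\|_{L^\infty}\ge\|\partial_\theta g(t)\|_{L^\infty}$ for every $t<T^*$, so boundedness of $\|\nabla\Theta\|_{L^\infty}$ up to $T^*$ directly contradicts the $1D$ blow-up.
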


\begin{remark}
	Given that the 2D part $\Theta^{2D}$ belongs to $C^{1,\alpha}(\mathbb{R}^2)$, the assumption that the $1D$ part of the solution has angular regularity of $g \in C^{2,\alpha}(S^1)$ is simply the minimal assumption, and one can impose any higher regularity, e.g. we may assume $g_0 \in C^\infty(S^1)$ and it propagates in time.
\end{remark}


We make a simple but powerful observation. Assume that $\Theta \in C^{1,\alpha}(\mathbb{R}^2)$ and $m$-fold symmetric, where $m \ge 3$. Then the stream function $\Psi = |\nabla|^{-1}\Theta$ is again $m$-fold symmetric and belongs to $C^{2,\alpha}(\mathbb{R}^2)$. The Taylor expansion at the origin therefore gives \begin{equation*}
\begin{split}
\Psi(x_1,x_2) = A_1 x_1 + A_2 x_2 + B_1 x_1x_2 + B_2 (x_1^2 + x_2^2) + O(|x|^{2+\alpha})~.
\end{split}
\end{equation*} However, the rotationally symmetry condition kills the coefficients $A_1, A_2$, and $B_1$. Moreover, the $B_2$ term should vanish by the odd symmetry assumption. In particular, by taking $\nabla^\perp$, $u(x_1,x_2) \le C|x|^{1+\alpha}$. Moreover, $\Theta \in C^{1,\alpha}(\mathbb{R}^2)$ with symmetry forces $\nabla\Theta(0) = 0$ and indeed $|\Theta(x)| \le C|x|^{1+\alpha}$ holds as well.

Along these lines, we have the following simple calculation, which will be used in the $C^\alpha$-estimate: \begin{lemma}
	Assume that $f \in \mathring{C}^{\alpha}$ and $g \in C^\alpha$ with $g(0) = 0$. Then we have the product rule \begin{equation*}
	\begin{split}
	\V fg \V_{C^\alpha} \le C\V f \V_{\mathring{C}^{\alpha}} \V g \V_{C^\alpha}~. 
	\end{split}
	\end{equation*}
\end{lemma}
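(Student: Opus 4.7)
The plan is to establish the two pieces of the $C^\alpha$ norm separately: the $L^\infty$ bound is immediate since $\V fg\V_{L^\infty}\le \V f\V_{L^\infty}\V g\V_{L^\infty}\le \V f\V_{\mathring C^{\alpha}}\V g\V_{C^{\alpha}}$. The real content is the Hölder semi-norm, for which I would fix $x\ne x'$ and, using the symmetry of the claim, assume without loss of generality that $|x'|\le |x|$. The key decomposition is
\begin{equation*}
f(x)g(x)-f(x')g(x') = (f(x)-f(x'))g(x') + f(x)(g(x)-g(x')),
\end{equation*}
chosen so that the difference of $f$'s is paired with $g$ evaluated at the point nearer the origin.

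For the second term there is nothing to do: $|f(x)(g(x)-g(x'))|\le \V f\V_{L^\infty}\V g\V_{C^\alpha}|x-x'|^\alpha$. The first term is where the vanishing assumption $g(0)=0$ becomes essential. Since $g\in C^\alpha$ and $g(0)=0$, we get $|g(x')|\le \V g\V_{C^\alpha}|x'|^\alpha$, producing a compensating factor of $|x'|^\alpha$. On the other hand, Lemma~\ref{lem:equiv} (applied in the form involving $\min\{|x|^\alpha,|x'|^\alpha\}$, i.e.\ using $|x'|^\alpha$ here) gives exactly
\begin{equation*}
|x'|^\alpha\,\frac{|f(x)-f(x')|}{|x-x'|^\alpha}\le 2\V f\V_{\mathring C^{\alpha}}.
\end{equation*}
Multiplying these two bounds and canceling the $|x'|^\alpha$'s yields $|(f(x)-f(x'))g(x')|\le 2\V f\V_{\mathring C^{\alpha}}\V g\V_{C^\alpha}|x-x'|^\alpha$, which is the desired estimate. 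The degenerate case $x'=0$ is harmless: then $g(x')=0$ so the first term vanishes identically.

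Thus the only nontrivial step is pairing the ``worse'' factor $f(x)-f(x')$ (which is only controlled with a weight $|x'|^{-\alpha}$ that blows up near the origin) with the ``better'' factor $g(x')$ (whose vanishing at the origin supplies exactly the needed $|x'|^\alpha$). The main obstacle to anticipate is simply making sure the correct choice of pairing and of $\min$ versus $\max$ in Lemma~\ref{lem:equiv} is used; choosing the wrong decomposition leads to a factor of $|x|^\alpha/|x'|^\alpha$ which cannot be absorbed. Once the pairing is done correctly no further subtlety arises, and the proof is complete.
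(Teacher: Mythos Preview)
Your proof is correct and follows essentially the same approach as the paper: the same product decomposition pairing $g(x')$ with $f(x)-f(x')$, and the same key observation that $g(0)=0$ supplies the factor $|x'|^\alpha$ needed to compensate the $\mathring{C}^{\alpha}$ weight. The only cosmetic difference is that you invoke Lemma~\ref{lem:equiv} (with the $\min$) directly, whereas the paper rederives the corresponding inequality inline in terms of $\bigl||x|^\alpha f(x)-|x'|^\alpha f(x')\bigr|$ (effectively the $\max$ version), and the roles of $x$ and $x'$ are swapped.
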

\begin{proof}
	It is an immediate consequence of the definition of the space $\mathring{C}^{\alpha}$. Take $0 < |x| \le |x'|$ and take \begin{equation*}
	\begin{split}
	\frac{|f(x)g(x) -f(x')g(x')|}{|x-x'|^\alpha} \le |f(x)| \frac{|g(x)-g(x')|}{|x-x'|^\alpha} + \frac{|g(x')|}{|x'|^\alpha} \left( \frac{| |x|^\alpha f(x) - |x'|^\alpha f(x')|}{|x-x'|^\alpha} + |f(x)|   \right)~,
	\end{split}
	\end{equation*} which establishes the desired bound. 
\end{proof}

\begin{proof}[Proof of local well-posedness]

	We simply write $\Theta(t)  = \Theta^{2D}(t) + \Theta^{1D}(t)$, where by definition $\Theta^{1D}(t) = r g(t)$ where $g$ is the unique local solution in $C([0,T);C^{2,\alpha}(S^1))$ of the system \eqref{eq:alpha_SQG_homog} with initial data $g_0$. Then we denote the associated velocity of $\Theta^{1D}(t)$ by $u^{1D}(t)$, and note that $\nabla u^{1D}, \nabla\Theta^{1D}$ belongs to $\mathring{C}^{1,\alpha}$. Then, the system for the remaining term $\Theta^{2D}:=\Theta - \Theta^{1D}$ is forced to be \begin{equation*}
	\begin{split}
	\partial_t \Theta^{2D}(t) + ( u^{1D}(t)  +  u^{2D}(t) )\cdot\nabla \Theta^{2D}(t)+ u^{2D}(t) \cdot\nabla\Theta^{1D}(t) = 0 ~,
	\end{split}
	\end{equation*} where $u^{2D}$ is the associated velocity lying in $C^{1,\alpha}(\mathbb{R}^2)$. Taking the gradient, \begin{equation}\label{eq:gradient_system_SQGLWP}
	\begin{split}
	&\partial_t \nabla\Theta^{2D}(t) + \left(( u^{1D}(t)  +  u^{2D}(t) )\cdot\nabla\right) \nabla \Theta^{2D}(t) \\
	&\qquad + (\nabla u^{1D}(t)  + \nabla u^{2D}(t) )\cdot\nabla \Theta^{2D}(t) + u^{2D}(t)\cdot\nabla^2\Theta^{1D}(t)+ \nabla u^{2D}(t) \cdot\nabla\Theta^{1D}(t) = 0 ~.
	\end{split}
	\end{equation} We just need an $L^\infty $ and an $C^\alpha$ estimate. We compose with the flow map generated by the velocity $u^{1D}+u^{2D}$ (let us suppress from writing out the composition with the flow everywhere), and then take absolute values to get \begin{equation*}
	\begin{split}
	&\left| \frac{d}{dt}  \nabla \Theta^{2D}(x)  \right| \le C\left( \V \nabla u^{1D}\V_{L^\infty}+\V\nabla  u^{2D}\V_{L^\infty} \right) \V \nabla\Theta^{2D}\V_{L^\infty} \\
	&\qquad\qquad\qquad\quad + \V  \frac{u^{2D}(x)}{|x|} \V_{L^\infty} \cdot  \V|x| \nabla^2\Theta^{1D}(x)\V_{L^\infty} + \V \nabla u^{2D}\V_{L^\infty} \V \nabla \Theta^{1D} \V_{L^\infty}~,
	\end{split}
	\end{equation*} and then we may use the bound $|x| |\nabla^2\Theta^{1D}(x)| \le C(|g''|+|g'|)$ which comes from the radial homogeneity of $\Theta^{1D}$. This establishes the $L^\infty$ bound.
	
	Next, taking the $C^\alpha$ norm, we have \begin{equation*}
	\begin{split}
	\left| \frac{d}{dt} \V \nabla\Theta^{2D}  \V_{C^\alpha} \right| &\le \V \nabla u^{2D} \V_{L^\infty} \V \nabla \Theta^{2D} \V_{C^\alpha} + \V \nabla u^{2D} \V_{C^\alpha} \V \nabla \Theta^{2D} \V_{L^\infty} + \V \nabla u^{1D} \cdot \nabla\Theta^{2D} \V_{C^\alpha} \\
	&\qquad + \V u^{2D} \cdot\nabla^2\Theta^{1D}\V_{C^\alpha} + \V  \nabla u^{2D}(t) \cdot\nabla\Theta^{1D} \V_{C^\alpha}~.
	\end{split}
	\end{equation*} To begin with, from the classical singular integral bound, we have \begin{equation*}
	\begin{split}
	\V \nabla u^{2D} \V_{C^\alpha}  \le C \V \nabla \Theta^{2D} \V_{C^\alpha}~,\qquad \V \nabla u^{2D} \V_{L^\infty}  \le C\V \nabla \Theta^{2D} \V_{L^\infty} \left(1 + \ln \left(  1+ \frac{\V \nabla \Theta^{2D} \V_{C^\alpha}}{\V \nabla \Theta^{2D} \V_{L^\infty}}  \right)  \right)~.
	\end{split}
	\end{equation*}  We estimate the remaining product terms using the above lemma. First, we have \begin{equation*}
	\begin{split}
	 \V \nabla u^{1D} \cdot \nabla\Theta^{2D} \V_{C^\alpha} \le \V \nabla u^{1D}\V_{\mathring{C}^\alpha} \V \nabla\Theta^{2D} \V_{C^\alpha} \le C \V \partial_\theta {g}\V_{C^\alpha(S^1)}\V \nabla\Theta^{2D} \V_{C^\alpha}
	\end{split}
	\end{equation*} thanks to the fact that $|\nabla\Theta^{2D}(x)|\le C|x|^\alpha$. Then, similarly we estimate \begin{equation*}
	\begin{split}
	\V \nabla u^{2D}(t) \cdot\nabla\Theta^{1D} \V_{C^\alpha} \le \V \nabla u^{2D}\V_{C^\alpha} \V \Theta^{1D}\V_{\mathring{C}^\alpha}  \le \V \nabla u^{2D}\V_{C^\alpha} \V\partial_\theta {g}\V_{C^\alpha(S^1)}
	\end{split}
	\end{equation*} and regarding the last term $\V u^{2D} \cdot\nabla^2\Theta^{1D}\V_{C^\alpha}$, one can explicitly compute \begin{equation*}
	\begin{split}
	u^{2D}\cdot \nabla^2 \left(r g(\theta) \right) &=  \frac{u^{2D}}{|x|} \cdot \Big(  (\nabla x - |x|^{-2} x\otimes x)g(\theta) + \\
	&\qquad  (-|x|^{-2} x\otimes x^\perp - \nabla x^\perp - |x|^{-2} x^\perp \otimes x  )g'(\theta) + |x|^{-2} x^\perp \otimes x^\perp g''(\theta)  \Big)
	\end{split}
	\end{equation*} and the expression in the large parentheses belongs to $\mathring{C}^{0,\alpha}$ simply because $g'' \in C^\alpha(S^1)$, and $u^{2D}/|x|$ belongs to $C^\alpha(\mathbb{R}^2)$ with $|u^{2D}/|x|| \le C|x|^\alpha$. Indeed, to see that $u^{2D}/|x|$ belongs to $C^{\alpha}(\mathbb{R}^2)$ we take two points $x,y$ with $|y|<|x|$
	\[\Big|\frac{u^{2D}(x)}{|x|}-\frac{u^{2D}(y)}{|y|}\Big|\leq \frac{|u^{2D}(x)-u^{2D}(y)|}{|x|}+|u^{2D}(y)|\frac{|x-y|}{|x||y|}\leq |\nabla u^{2D}(\zeta_{x,y})|\frac{|x-y|}{|x|}+|y|^{\alpha}\frac{|x-y|}{|x|}\lesssim |x-y|^\alpha\] where $|\zeta_{x,y}|\leq |x|$ lies on the line connecting $x$ and $y$ and we have used $|u(y)|\lesssim |y|^{1+\alpha}$ and $|\nabla u^{2D}(\zeta_{x,y})|\lesssim |\zeta_{x,y}|^{\alpha}.$  
	
	Therefore we may close the estimates in terms of $\V \nabla\Theta\V_{C^{\alpha}(\mathbb{R}^2)}$ given finiteness of $\V {g} \V_{C^{2,\alpha}(S^1)}$. This gives the desired local well-posedness statement. To establish the blow-up criterion, assume that we are given \begin{equation*}
	\begin{split}
	\int_0^{T^*} \V \nabla \Theta^{2D}(t) \V_{L^\infty} + \V  \partial_\theta {g}(t,\theta) \V_{L^\infty} dt < + \infty~.
	\end{split}
	\end{equation*}  Then, to begin with, this gives finiteness \begin{equation*}
	\begin{split}
	\sup_{t \in [0,T^*]} \V {g}\V_{C^{2,\alpha}(S^1)} \le C < +\infty
	\end{split}
	\end{equation*} and in turn, this guarantees that \begin{equation*}
	\begin{split}
	\sup_{t \in [0,T^*]} \V \nabla\Theta^{2D}\V_{C^{1,\alpha}(\mathbb{R}^2)} \le C < \infty
	\end{split}
	\end{equation*} since the a priori estimate is only log-linear in the norm $\V \nabla\Theta^{2D}\V_{C^{1,\alpha}}$. This concludes the first part of the theorem. 
\end{proof}

\begin{proof}[Proof of conditional blow-up]

	For the sake of contradiction assume that the solution $\Theta = \Theta^{1D} + \Theta^{2D}$ stays smooth up to time $T^*$, which implies in particular that \begin{equation*}
	\begin{split}
	\sup_{t \in [0,T^*]} \V \nabla\Theta(t)\V_{L^\infty} \le C < +\infty~.
	\end{split}
	\end{equation*}
	
	From the blow-up criterion for the $1D$ system, we know that there is an increasing sequence of time moments $t_k \rightarrow T^*$ for which $\V \partial_\theta {g}(t_k)  \V_{L^\infty} \rightarrow +\infty$. Next, note that for any $t < T^*$, \begin{equation*}
	\begin{split}
	\limsup_{x \rightarrow 0} |\nabla\Theta(t,x)| = \limsup_{x \rightarrow 0} | \nabla\Theta^{1D} (t,x)|
	\end{split}
	\end{equation*} simply because $|\nabla\Theta^{2D}(t,0)| \equiv  0$ and $\nabla\Theta^{2D}(t,x)$ is continuous in time and space. We then observe that \begin{equation*}
	\begin{split}
	|\nabla \Theta^{1D}|^2 = |\nabla r \cdot g(\theta) + r\nabla\theta \cdot \partial_\theta g(\theta)|^2 = |g|^2 + |\partial_\theta g|^2 \ge |\partial_\theta g|^2~.
	\end{split}
	\end{equation*}  From this, we have that for $t < T^*$ (since $\nabla\Theta^{1D}$ is smooth in space away from the origin) \begin{equation*}
	\begin{split}
\V \nabla\Theta(t) \V_{L^\infty} \ge \limsup_{x \rightarrow 0} | \nabla\Theta^{1D}(t,x)| \ge  \V \partial_\theta {g}(t)  \V_{L^\infty}~.
	\end{split}
	\end{equation*} However, taking the $\limsup$ in time along the sequence $\{ t_k\}$, we obtain a contradiction. 
\end{proof}

\section{Further directions}

\subsection{Consequences of this work}
\subsubsection*{Global regularity for vortex patches with corners} 
An interesting question is whether global well-posedness can be proven for vortex patches with corners in $2D$. Our analysis in this paper actually covers vortex patches with corners but where the vorticity is constant along straight lines emanating from the origin. In a forthcoming work \cite{EJ2}, which can be seen as a companion to this paper, we prove global well-posedness for more general vortex patches with corners. 
\subsubsection*{3D Euler} 
As can be expected, it is possible to prove local well-posedness for bounded vorticities even in the 3D case under a suitable symmetry condition using the $\mathring{C}^{0,\alpha}$ spaces we defined above. This case is discussed in the Ph.D thesis of the second author \cite{Jthesis}. 
\subsection{Open Questions}
We close this paper by mentioning a few interesting open questions. 
\subsubsection*{V-States with Corners}
In the past few years, much work has been done on the construction of time-periodic solutions to the $2D$ Euler equation and, more generally, to active scalar equations.  Indeed, a V-state is defined to be a vortex patch solution to the $2D$ Euler equation where the vortex patch simply rotates for all time. The general method for proving existence of such V-states, starting with the work of Burbea \cite{Bu}, is to bifurcate from the base V-state, $\chi_{B_1(0)}$ which is the characteristic function of $B_1(0)$, which is a stationary solution of the $2D$ Euler equation. These bifurcation solutions have been shown to be smooth or even analytic in some cases. Hmidi, Mateau and Verdera \cite{HMV}, for example, proved that if a V-state is close in the $C^2$ norm to the disk, then it actually has smooth boundary. In this paper, we proved the existence of V-states with corners whose support is unbounded. Moreover, it can easily be checked that these ``unbounded" V-states with corners actually lead to V-states with corners when the problem is posed on the unit disk \cite{EJ2}. What all this means is that V-states with corners \emph{do} exist when the domain is just the unit disk or when the V-states are allowed to be unbounded. A natural question then arises: can one ``cut-off" the unbounded V-states constructed in this paper to construct bounded V-states which are smooth except at the origin where there is a corner?    
\subsubsection*{Vortex Patches with Corners with Quasi-Periodic motion}
Above, we constructed vortex sheets emanating from the origin which exhibit pendulum-like motion which is quasi-periodic in time. It is not even clear whether there exist vortex patches with quasi-periodic behavior of the type we constructed in this paper (which are constant along lines emanating from the origin). If such solutions do exist in the case of unbounded support, can they be made bounded?
\subsubsection*{Finite-time singularities for the SQG equation with Lipschitz data.}
Prove the existence of finite time singularities for homogeneous solutions of the SQG. As stated above, this would imply the formation of finite time singularities for the SQG equation in the class of Lipschitz compactly supported data.

\section*{Acknowledgments}

The authors would like to thank the anonymous referee for very helpful comments. T. M. Elgindi was partially supported by NSF Grants DMS-1402357 and DMS-1817134 during the completion of this work.

\bibliographystyle{plain}
\bibliography{BRS}
\end{document}